\newtheorem{theorem}{Theorem}[section]
\newtheorem{lemma}[theorem]{Lemma}
\theoremstyle{definition}
	\newtheorem{definition}[theorem]{Definition}
	\newtheorem{remark}[theorem]{Remark}
\theoremstyle{definition}
\begin{document}
	
\author{Melissa Lee and Tomasz Popiel}
\address{School of Mathematics, Monash University, Clayton VIC 3800, Australia}
\email{melissa.lee@monash.edu, tomasz.popiel@monash.edu}

\title[Recognisability of the sporadic groups]{Recognisability of the sporadic groups by the isomorphism types of their prime graphs}

\begin{abstract} 
The {\em prime graph} of a finite group $G$ is the labelled graph $\Gamma(G)$ with vertices the prime divisors of $|G|$ and edges the pairs $\{p,q\}$ for which $G$ contains an element of order $pq$. 
A group $G$ is {\em recognisable} by its prime graph if every group $H$ with $\Gamma(H)=\Gamma(G)$ is isomorphic to $G$. 
Cameron and Maslova have shown that every group that is recognisable by its prime graph is almost simple, which justifies the significant amount of attention that has been given to determining which simple (or almost simple) groups are recognisable by their prime graphs. 
This problem has been completely solved for certain families of simple groups, including the sporadic groups. 
A natural extension of the problem is to determine which groups are recognisable by their {\em unlabelled} prime graphs, i.e. by the isomorphism types of their prime graphs. 
There seem to be only very limited results in this direction in the literature. 
Here we determine which of the sporadic finite simple groups are recognisable by the isomorphism types of their prime graphs. 
We also show that for every sporadic group $G$ that is not recognisable by the isomorphism type of $\Gamma(G)$, there are infinitely many groups $H$ with $\Gamma(H) \cong \Gamma(G)$. 
\end{abstract}

\thanks{The first author acknowledges the support of an Australian Research Council Discovery Early Career Researcher Award, project no.~DE230100579. 
We thank an anonymous referee for several helpful comments.}

\date{\today}
\dedicatory{Dedicated to Cheryl Praeger on the occasion of her 75th birthday}
\maketitle

\section{Introduction}	 \label{s:intro}

All groups considered in this paper are finite.

The {\em prime} graph of a group $G$ is the labelled graph $\Gamma(G)$ with vertices the prime divisors of $|G|$ and edges the pairs $\{p,q\}$ for which $G$ contains an element of order $pq$. 
It is also called the {\em Gruenberg--Kegel graph}, having been introduced by Gruenberg and Kegel in an unpublished manuscript in 1975. 
We use the term {\em prime graph} here for brevity. 
A group $G$ is {\em recognisable} by its prime graph if every group $H$ with $\Gamma(H)=\Gamma(G)$ is isomorphic to $G$. 
More generally, $G$ is $k$-{\em recognisable} by its prime graph if there are precisely $k$ pairwise non-isomorphic groups with the same prime graph as $G$. 
If $G$ is not $k$-recognisable by its prime graph for any $k$, then $G$ is said to be {\em unrecognisable} by its prime graph. 
(Note that the terms ``unrecognisable'' and ``not recognisable'' are therefore not synonymous.)

The question of recognisability of various groups by their prime graphs has attracted significant interest. 
We refer the reader to the recent article of Cameron and Maslova~\cite{CamMas} for an up-to-date literature review (and several significant results). 
Much attention has been given to simple groups, and this is justified by \cite[Theorem~1.2]{CamMas}, which shows that every group $G$ that is $k$-recognisable by its prime graph for some $k$ is almost simple, i.e. $G_0 \trianglelefteq G \leq \text{Aut}(G_0)$ for some non-abelian simple group $G_0$. 
The recognisability problem remains open for many families of simple groups, but has been completely settled in some cases.  
In particular, the case of the $26$ sporadic simple groups was recently completed by the present authors \cite{MBCo1}, building on a significant body of earlier work due in particular to Hagie~\cite{Hagie} and Kondrat'ev~\cite{Kon1,Kon2}; see Table~\ref{tabSummary}.
Another major recent result is due to Maslova, Panshin, and Staroletov~\cite{NMexceptional}, who have shown that all simple exceptional groups of Lie type apart from $\text{G}_2(3)$ and the Suzuki groups are $k$-recognisable for some $k$.

A natural extension of the recognisability question is the following. 
Given a group $G$ that is recognisable by its prime graph $\Gamma(G)$, is $G$ still recognisable if we remove the vertex labelling from $\Gamma(G)$? 
In other words, is it the case that every group $H$ with prime graph $\Gamma(H)$ {\em isomorphic}, but not necessarily equal, to $\Gamma(G)$, is isomorphic to $G$? 
If the answer is yes, then $G$ is said to be {\em recognisable by the isomorphism type} of its prime graph. 
This question has been raised (in particular) by Cameron and Maslova, who showed that the simple groups ${}^2\text{G}_2(27)$ and $\text{E}_8(2)$ are recognisable by the isomorphism types of their prime graphs \cite[Theorem~1.5]{CamMas}. 
We have also been informed that Maslova and Zinonieva have shown that the groups $\text{E}_8(q)$ for $q \in \{3,4,5,7,8,9,17\}$ are recognisable by the isomorphism types of their prime graphs. 
Beyond this, not much seems to be known. 

The primary purpose of this paper is to determine which of the $26$ sporadic finite simple groups are recognisable by the isomorphism types of their prime graphs. 
Of course, given a group $G$ (sporadic or otherwise) that is {\em not} recognisable by the isomorphism type of $\Gamma(G)$, it is natural to ask whether $G$ is {\em $k$-recognisable} in the sense that there are exactly~$k$ (pairwise non-isomorphic) groups $H$ with $\Gamma(H) \cong \Gamma(G)$ for some $k$, or {\em unrecognisable} in the sense that there are infinitely many groups $H$ with $\Gamma(H) \cong \Gamma(G)$. 
We also completely answer this secondary question. 
Our main result is as follows.

\begin{theorem} \label{thm1}
A sporadic finite simple group $G$ is recognisable by the isomorphism type of its prime graph if and only $G$ is one of the following eight groups:
\[
\mathbb{B},\; \textnormal{Fi}_{23},\; \textnormal{Fi}_{24}',\; \textnormal{J}_4,\; \textnormal{Ly},\; \mathbb{M},\; \textnormal{O'N},\; \textnormal{Th}. 
\]
Moreover, for every sporadic finite simple group $G$ not appearing in the above list, there are infinitely many pairwise non-isomorphic groups $H$ with $\Gamma(H) \cong \Gamma(G)$. 
\end{theorem}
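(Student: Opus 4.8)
The plan is to combine the classification of the sporadic groups by their \emph{labelled} prime graph obtained in \cite{MBCo1} with the Cameron--Maslova theorem \cite[Theorem~1.2]{CamMas}. The first step is the elementary but crucial observation that recognisability by the isomorphism type of $\Gamma(G)$ implies recognisability by the labelled graph $\Gamma(G)$, since $\{H : \Gamma(H)=\Gamma(G)\} \subseteq \{H : \Gamma(H)\cong\Gamma(G)\}$ and a universal statement over the larger class implies it over the smaller one. Consequently the eight groups in the list must already be recognisable by their labelled prime graphs, so one may restrict attention to the finitely many labelled-recognisable sporadic groups identified in \cite{MBCo1}; every sporadic group that is not labelled-recognisable is automatically excluded from the list and requires only the infinitude statement. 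This splits the theorem into a positive direction (the eight groups are recognisable by isomorphism type) and a construction of infinitely many witnesses for the remaining groups.

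For the infinitude statement I would use the contrapositive of \cite[Theorem~1.2]{CamMas}: a group that is \emph{not} almost simple is unrecognisable by its labelled prime graph, so infinitely many groups share its labelled graph and hence, a fortiori, its unlabelled graph. Thus, to prove that a sporadic group $G$ outside the list admits infinitely many pairwise non-isomorphic $H$ with $\Gamma(H)\cong\Gamma(G)$, it suffices to exhibit a single group $H_0$ that is not almost simple and satisfies $\Gamma(H_0)\cong\Gamma(G)$. When $G$ is already unrecognisable by its labelled prime graph such an $H_0$ is available for free, realising $\Gamma(G)$ on the nose. For the remaining cases---the groups that are $k$-recognisable by their labelled graph for some finite $k$ but are not among the eight---I would construct an explicit non-almost-simple $H_0$, for instance a suitable soluble Frobenius or $2$-Frobenius group when the complement of $\Gamma(G)$ permits, or an affine-type extension $V\rtimes S$ with $V$ elementary abelian and $S$ simple, whose unlabelled prime graph matches the abstract graph underlying $\Gamma(G)$. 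Verifying $\Gamma(H_0)\cong\Gamma(G)$ for these finitely many explicit groups is a bounded, case-by-case computation.

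For the positive direction I would fix $G$ among the eight and a group $H$ together with a graph isomorphism $\phi\colon\Gamma(H)\to\Gamma(G)$, and aim to show $H\cong G$. The essential difference from labelled recognisability---and the crux of the problem---is that $\phi$ need not send a prime of $H$ to the same prime of $G$, and indeed the prime divisors of $|H|$ need not be those of $|G|$ at all. The strategy is therefore to recover the vertex labelling from the combinatorics of the abstract graph together with the arithmetic of element orders. Each of the eight groups has a disconnected prime graph with a rigid component structure, so I would first apply the Gruenberg--Kegel structure theorem to determine the global shape of $H$ (Frobenius, $2$-Frobenius, or an extension possessing a simple section whose prime graph is a prescribed subgraph). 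The presence of an independent set of three vertices in $\Gamma(G)$ forces $H$ to be non-soluble, eliminating the Frobenius and $2$-Frobenius alternatives, after which the classification of finite simple groups reduces the problem to matching $\Gamma(G)$ against the prime graphs of the finitely many candidate simple sections.

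I expect the main obstacle to be precisely this final matching step: one must show that, ranging over all simple groups and over all admissible assignments of primes to the vertices, the abstract graph $\Gamma(G)$ is realised only by $G$. The key levers are adjacency and degree invariants that pin down distinguished vertices---in particular the vertex carrying the prime $2$, which is forced to have large degree, and the isolated or low-degree vertices carrying the large primes dividing $|G|$---together with divisibility constraints of the form ``an edge $\{p,q\}$ requires $pq$ to divide some element order'', which restrict which primes can occupy which vertices. Once the images of $2$ and of the large primes are determined, the labelled recognisability result of \cite{MBCo1} can be invoked to finish. The bulk of the work will be a careful group-by-group confirmation that the abstract prime graphs of $\mathbb{B}$, $\mathbb{M}$ and the other six possess exactly this rigidity, and that no competing simple group over any prime set yields an isomorphic graph.
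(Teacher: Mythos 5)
Your proposal follows essentially the same route as the paper: the infinitude statement is reduced via \cite[Theorem~1.2]{CamMas} to exhibiting, for each sporadic $G$ outside the list, a single group with non-trivial soluble radical (in practice an affine extension $V{:}S$) whose prime graph is isomorphic to $\Gamma(G)$, and the positive direction proceeds via the Gruenberg--Kegel theorem and the classification of simple groups with at least three prime-graph components, followed by a case-by-case elimination of candidate socles using graph invariants ($s$, $|\pi|$, $t$, $t(2,\cdot)$) and modular-representation arguments, with labelled recognisability invoked exactly when the socle is isomorphic to $G$. One minor correction: non-solubility of $H$ does not by itself eliminate the Frobenius alternative (Frobenius complements can involve $\mathrm{SL}_2(5)$); what actually rules out cases (i) and (ii) of the Gruenberg--Kegel theorem is that there $\Gamma(H)$ has only two connected components and independence number at most $3$, whereas each of the eight target graphs has at least three components and independence number at least $5$.
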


We also point out the following error in the literature. 

\begin{remark} \label{rem:Ru}
It is asserted in \cite{Kon1} that the sporadic group $\text{Ru}$ is recognisable by its labelled prime graph. 
This is incorrect. 
There are infinitely many groups with the same prime graph as $\text{Ru}$. 
A proof is given in Section~\ref{ss:Ru}. 
The revised solution to the recognisability-by-labelled-prime-graph problem for the sporadic groups is summarised in Table~\ref{tabSummary}. 
\end{remark}

\section{Preliminaries} \label{s:prelim}

We first establish some notation, which is mostly standard in the literature on prime graphs. 
Given a group $G$, let $\pi(G)$ denote the set of prime divisors of $|G|$. 
Let $t(G)$ denote the independence number of the prime graph $\Gamma(G)$ of $G$, i.e. the maximal size of a coclique in $\Gamma(G)$. 
Given $p \in \pi(G)$, let $t(p,G)$ denote the maximal size of a coclique in $\Gamma(G)$ containing the vertex $p$. 
Let $s(G)$ denote the number of connected components of $\Gamma(G)$, and label these connected components $\pi_i(G)$ for $i = 1,\ldots,s(G)$. 
If $|G|$ is even, declare $\pi_1(G)$ to be the component containing the vertex $2$.

Let $F(G)$ denote the Fitting subgroup of $G$, i.e. the unique maximal nilpotent subgroup of $G$, and let $R(G)$ denote the soluble radical of $G$, i.e. the unique maximal soluble subgroup of $G$. 
Write $G=N{:}M$ for a split extension with normal subgroup $N \lhd G$ and complement $M < G$ isomorphic to $G/N$. 
The {\em socle} of $G$ is the subgroup of $G$ generated by the minimal normal subgroups of $G$. 
If $G$ is {\em almost simple}, i.e. $S \trianglelefteq G \leq \text{Aut}(S)$ for some non-abelian simple group $S$, then the socle of $G$ is equal to $S$. 

In their unpublished manuscript, Gruenberg and Kegel proved a theorem describing the structure of $G$ in the case in which $\Gamma(G)$ is disconnected. 
This theorem, which is now known as the {\em Gruenberg--Kegel Theorem}, was later published by Gruenberg's student Williams \cite[Theorem~A]{Williams}. 
There are various versions of the Gruenberg--Kegel Theorem appearing in the literature; we present a version based  on \cite[Lemma~2]{Zavar}. 

\begin{theorem}[Gruenberg--Kegel] \label{GKthm}
If $H$ is a finite group with disconnected prime graph, then one of the following statements holds.
\begin{itemize}
\item[(i)] $H$ is a Frobenius group and $\Gamma(H)$ consists of two connected components $\Gamma(K)$ and $\Gamma(C)$, where $K$ and $C$ are the Frobenius kernel and complement of $H$, respectively; $\Gamma(K)$ is a complete graph; and $\Gamma(C)$ is either a complete graph, or $2,3,5 \in \pi(C)$ and $\Gamma(C)$ is obtained from the complete graph on $\pi(C)$ by deleting the edge $\{3,5\}$. 
\item[(ii)] $H$ is a $2$-Frobenius group, i.e. $H$ has shape $(A{:}B){:}C$ and both $A{:}B$ and $B{:}C$ are Frobenius groups. 
In this case, $\Gamma(H)$ consists of two connected components $\Gamma(AC)$ and $\Gamma(B)$, both of which are complete graphs. 
\item[(iii)] There exists a non-abelian simple group $S$ such that $S \trianglelefteq \overline{H} := H/F(H) \leq \textnormal{Aut}(S)$. 
Moreover, $\Gamma(S)$ is disconnected with $s(S) \geq s(H)$, and both $\pi(F(H))$ and $\pi(\overline{H}/S)$ are (possibly empty) subsets of $\pi_1(H)$. 
\end{itemize}
\end{theorem}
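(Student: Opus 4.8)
The plan is to dichotomise on the solubility of $H$: the soluble case yields the Frobenius and $2$-Frobenius alternatives (i) and (ii), and the non-soluble case yields (iii). The tool used throughout is the following separation principle. If $N\trianglelefteq H$, if $p\in\pi(N)$, and if $q\in\pi(H)$ lies in a connected component of $\Gamma(H)$ different from that of $p$, then no element of order $q$ centralises a non-trivial $p$-element of $N$; otherwise such a commuting pair would generate an element of order $pq$, contradicting that $\{p,q\}$ is a non-edge. In particular a $q$-element then acts fixed-point-freely on any $p$-subgroup of $N$ it normalises, which is precisely the hypothesis of Thompson's theorem that a finite group admitting a fixed-point-free automorphism of prime order is nilpotent. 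I would first record this principle, together with its sharpening that a Hall subgroup for one union of components acts semiregularly on any normal subgroup all of whose prime divisors lie in the complementary union of components.

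For soluble $H$ I would take a minimal normal subgroup $V$, necessarily elementary abelian of some characteristic $r$ in one component $\pi_j$, and use the separation principle to make the Hall subgroup $L$ for the remaining primes act semiregularly on $V$, so that $VL$ is a Frobenius group with kernel $V$. Exploiting $C_H(F(H))\le F(H)$ (valid for soluble $H$) to view $H/F(H)$ as a group of automorphisms of $F(H)$ on which the far primes act semiregularly, I would climb the chief series and conclude that $H$ is either a Frobenius group or has the $2$-Frobenius shape $(A{:}B){:}C$, reading off the stated component structure in each case. The exceptional clause in (i) --- that $\Gamma(C)$ is complete unless $2,3,5\in\pi(C)$ with only the edge $\{3,5\}$ absent --- then comes from Zassenhaus's description of Frobenius complements: all Sylow subgroups are cyclic or generalised quaternion and every subgroup whose order is a product of two primes is cyclic, so the only way an edge can fail is through an $\textnormal{SL}_2(5)$-section, which contains elements of orders $2,3,5$ but none of order $15$.

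For non-soluble $H$ the target is that $\overline{H}=H/F(H)$ is almost simple. I would work with the generalised Fitting subgroup $F^*(H)=F(H)E(H)$, where $E(H)$ is the layer, using $C_H(F^*(H))\le F^*(H)$. If $E(H)\ne 1$ then its components centralise one another, so with two or more components the primes of $E(H)$ would span a connected subgraph of $\Gamma(H)$; the separation principle applied to $E(H)$, combined with the fact that a non-trivial direct product of non-abelian simple groups has no fixed-point-free automorphism of prime order (a prime-order automorphism fixes a diagonal, and on a single factor Thompson's theorem applies), then forces $E(H)$ to be a single quasisimple group carrying the disconnection, with $S:=E(H)/Z(E(H))$ simple, $\Gamma(S)$ disconnected, and $C_H(F^*(H))\le F(H)$, whence $S\trianglelefteq\overline{H}\le\textnormal{Aut}(S)$. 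The remaining configuration $E(H)=1$, in which $H$ acts faithfully modulo $Z(F(H))$ on its Fitting subgroup, I would dispatch similarly, the disconnection again forcing $\overline{H}$ to be almost simple. The refinements then follow by tracking adjacencies: $s(S)\ge s(H)$ because passing from $S$ up through $\overline{H}$ to $H$ only ever adds edges and hence only merges components; while $\pi(F(H))$ and $\pi(\overline{H}/S)$ lie in $\pi_1(H)$ because any such prime, through a non-trivial fixed point of the relevant coprime or automorphic action on $S$, yields an element whose order links it to a prime of $S$ and thence to the component $\pi_1(H)$ of the involution $2\in\pi(S)$.

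I expect the non-soluble case to be the principal obstacle, and within it the proof that $\overline{H}$ is genuinely almost simple --- that the socle is a single non-abelian simple group, with neither a non-trivial $F(\overline{H})$ nor several commuting simple factors distributed across distinct components. Converting the disconnection of $\Gamma(H)$ into the impossibility of a fixed-point-free prime-order automorphism of a non-abelian (quasi)simple group, via the separation principle and coprime-action theory, is the heart of the matter; the accompanying bookkeeping that confines every prime of $F(H)$ and of the outer part to the single component $\pi_1(H)$ is the remaining delicate ingredient.
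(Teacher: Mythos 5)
Your architecture is the classical one --- it is essentially Williams' original strategy --- and several ingredients are sound: the separation principle, Thompson's theorem for the nilpotency of the Frobenius kernel, the non-existence of fixed-point-free prime-order automorphisms of direct products of non-abelian simple groups, and the deduction of $s(S)\geq s(H)$ from edge-monotonicity \emph{once} the confinement claim is in hand. But the sketch has genuine gaps at exactly the load-bearing steps. First, in case (ii) you never argue that $\Gamma(AC)$ and $\Gamma(B)$ are complete; ``reading off the stated component structure'' does not work, because $A$ and $C$ need not commute elementwise, so adjacency of $p\in\pi(A)$ to $q\in\pi(C)$ requires a real argument exploiting the cyclic structure of $B$ and $C$ --- this is precisely why the paper cites \cite[Lemma~1]{Mazurov} and \cite[Lemma~3]{MazurovEtal} rather than deriving it. Second, in case (i) your inference ``every subgroup of order $pq$ is cyclic, so the only way an edge can fail is through an $\mathrm{SL}_2(5)$-section'' is a non sequitur: cyclicity of order-$pq$ subgroups says nothing about their \emph{existence}, and $\mathrm{SL}_2(5)$ itself satisfies the cyclicity condition vacuously for $\{3,5\}$ precisely because it has no subgroup of order $15$ at all. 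To get completeness of $\Gamma(C)$ away from $\{3,5\}$ you need Zassenhaus's full structure theorem (a subgroup $\mathrm{SL}_2(5)\times M$ of index at most $2$ with $M$ metacyclic of order coprime to $30$, and in the soluble case Hall $\{p,q\}$-subgroups with cyclic or quaternion Sylow subgroups) to manufacture subgroups of order $pq$ before cyclicity can be invoked.

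The most serious gap is the non-soluble case with $E(H)=1$, which you ``dispatch similarly'' without supplying any mechanism. This is the generic configuration --- every affine group $V{:}G$ constructed elsewhere in this very paper has trivial layer --- and there the component analysis of $E(H)$ gives you nothing: one must show that disconnectedness forces $F(\overline{H})=1$ and a simple socle, and this is where the heavy classical inputs enter (fixed-point-free action of $2$-elements on $O_r(H)$ forces Sylow $2$-subgroups to be cyclic or generalised quaternion, which are then excluded by Burnside's transfer theorem together with Feit--Thompson, and by Brauer--Suzuki, respectively, using also the coprime-generation fact $R=\langle C_R(v): v\in V^{\#}\rangle$ for a four-group $V$). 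Relatedly, your confinement argument proves only that each prime $r$ dividing $|F(H)|$ or $|\overline{H}/S|$ is adjacent to \emph{some} prime of $S$; the concluding ``and thence to $\pi_1(H)$'' does not follow, since that prime of $S$ could a priori lie in a far component, so $\pi(F(H)),\pi(\overline{H}/S)\subseteq\pi_1(H)$ remains unproved as written. Finally, note the mismatch in scope: the paper does not reprove Gruenberg--Kegel at all --- its proof assembles Williams' published Theorem~A with quotable refinements (Gruenberg--Roggenkamp, Thompson, Zassenhaus, Mazurov et al.) and adds only the easy observation that $N$ may be replaced by $F(H)$ in case (iii). Your proposal, even if completed, would be a self-contained re-derivation of a substantially harder theorem than anything the paper actually verifies; as it stands, the steps listed above are missing rather than merely compressed.
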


\begin{proof}
The version of the Gruenberg--Kegel Theorem proved by Williams~\cite[Theorem~A]{Williams} asserts that either (i)$'$ $H$ is a Frobenius group, (ii)$'$ $H$ is a $2$-Frobenius group, or (iii)$'$ there exists a nilpotent normal subgroup $N$ of $H$ and a non-abelian simple group $S$ such that $S \trianglelefteq H/N \leq \text{Aut}(S)$ and $\pi(N)$ and $\pi((H/N)/S)$ are both subsets of $\pi_1(H)$ (which does indeed contain $2$ because $H$ must have even order, given that it has a non-abelian composition factor \cite{FTthm}). 
In case (iii)$'$, it follows that $H/F(H)$ is also almost simple with socle $S$ because $N \leq F(H)$, so we can replace $N$ by $F(H)$ to obtain case (iii) as stated. 

It remains to justify the claims about the connected components of $\Gamma(H)$ in cases (i)$'$ and (ii)$'$, so that we can replace these by cases (i) and (ii) as stated. 
For a proof of the fact that $\Gamma(H)$ has exactly two connected components (of the stated forms) in these cases, see \cite[Lemma~4.3]{GruenRogg}. 
In case (i)$'$, the Frobenius kernel $K$ of $H$ is nilpotent by a well-known theorem of Thompson~\cite{Thompson}, so $\Gamma(K)$ is complete because $K$ is a direct product of its Sylow subgroups; the description of $\Gamma(H)$ given in (i) follows from Zassenhaus' classification of Frobenius complements \cite{Zass1,Zass2}. 
In case (ii)$'$, the fact that both components of $\Gamma(H)$ are complete graphs follows from \cite[Lemma~1]{Mazurov} and \cite[Lemma~3]{MazurovEtal}.
\end{proof}

We also need the following refinement of the Gruenberg--Kegel Theorem due to Vasil'ev and Gorshkov \cite[Theorem~1]{VasilevGorshkov} (see also \cite{Vasilev}). 

\begin{theorem} \label{GKthmVasilev}
If $H$ is a finite group with $t(H) \geq 3$ and $t(2,H) \geq 2$, then all of the following statements hold. 
\begin{itemize}
\item[(i)] There exists a non-abelian simple group $T$ such that $T \trianglelefteq \hat{H} := H/R(H) \leq \textnormal{Aut}(T)$. 
\item[(ii)] For every coclique $\rho \subseteq \pi(H)$ in $\Gamma(H)$ of size at least $3$, at most one prime in $\rho$ divides $|R(H)|\cdot|\hat{H}/T| = |H|/|T|$. In particular, $t(T) \geq t(H)-1$. 
\item[(iii)] One of the following statements holds.
	\begin{itemize}
	\item[(a)] Every prime $r \in \pi(H)$ that is not adjacent to $2$ in $\Gamma(H)$ does not divide $|R(H)|\cdot|\hat{H}/T| = |H|/|T|$. In particular, $t(2,T) \geq t(2,H)$. 
	\item[(b)] There exists a prime $r \in \pi(R(H))$ that is not adjacent to $2$ in $\Gamma(H)$. In this case, $t(H) = 3$, $t(2,H) = 2$, and $T \cong \textnormal{Alt}_7$ or $\textnormal{PSL}_2(q)$ for some odd $q$. 
	\end{itemize}
\end{itemize}
\end{theorem}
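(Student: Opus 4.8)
The plan is to treat this as a refinement of the Gruenberg--Kegel Theorem (Theorem~\ref{GKthm}), reading the two hypotheses as the precise conditions that prevent $H$ from being solvable and that force the vertex $2$ to genuinely participate in a non-edge. Write $R = R(H)$ for the solvable radical and $\hat H = H/R$. Since $R(\hat H) = 1$, the socle of $\hat H$ is a direct product $T_1 \times \cdots \times T_k$ of non-abelian simple groups, with $T_1 \times \cdots \times T_k \trianglelefteq \hat H \leq \mathrm{Aut}(T_1 \times \cdots \times T_k)$. The basic tool I would record first is the elementary \emph{adjacency in direct products} observation: if $p \mid |T_i|$ and $q \mid |T_j|$ with $i \ne j$, then $p$ and $q$ are joined in $\Gamma$, since commuting elements of orders $p$ and $q$ give an element of order $pq$; moreover every $|T_i|$ is even by Feit--Thompson, so $2$ is adjacent to every prime lying in a factor other than those containing it. The second tool is a \emph{lifting of adjacencies} lemma: for a prime $p \nmid |R|$ an edge at $p$ in $\Gamma(\hat H)$ transfers to $\Gamma(H)$ by coprime (Schur--Zassenhaus) lifting of a Sylow $p$-subgroup, and analogous coprime arguments control the passage across the other layers.

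For part (i) I would argue by contradiction, assuming $k \ge 2$. By the adjacency observation, any coclique of $\Gamma(\hat H)$ can meet the primes exclusive to at most one factor $T_i$, and when $k \ge 2$ the prime $2$ (dividing several factors) is adjacent to every prime of the socle; hence $2$ can fail to be adjacent only to primes of $\pi(R) \cup \pi(\hat H/\mathrm{soc}(\hat H))$. Combining this with $t(2,H) \ge 2$ and transferring edges back to $\Gamma(H)$ via the lifting lemma squeezes every coclique of $\Gamma(H)$ down to size at most $2$, contradicting $t(H) \ge 3$. The technical point to discharge here is exactly that the relevant edges survive the quotient by $R$, which is where the coprime-lifting arguments, together with the controlled structure of the prime graph of the solvable group $R$, do the work.

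Part (ii) is the combinatorial heart. With $T = \mathrm{soc}(\hat H)$ now known to be simple, let $\rho$ be a coclique with $|\rho| \ge 3$, and suppose two primes $p, q \in \rho$ both divide $|H|/|T| = |R|\cdot|\hat H/T|$. I would derive a contradiction by producing an element of order $pq$: if $p$ divides the order of a solvable normal section, then coprime action (Schur--Zassenhaus plus the Hall theory available in that solvable section) against the third coclique prime $r \in \rho \cap \pi(T)$ forces an adjacency that $\rho$ forbids. For the quotient $\hat H/T$ one uses that this group is solvable by Schreier's conjecture (a consequence of CFSG), so the same coprime machinery applies. The promised inequality $t(T) \ge t(H) - 1$ is then immediate: deleting from $\rho$ the at most one prime that divides $|H|/|T|$ leaves a coclique supported on $\pi(T)$.

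For part (iii) I would dichotomise on whether some prime $r \in \pi(R)$ is non-adjacent to $2$. If no such prime exists then the argument of part (ii), applied with ``non-adjacent to $2$'' in place of ``lies in a coclique'', shows that no prime of $\pi(H) \setminus \pi(T)$ is non-adjacent to $2$, which gives (a) and $t(2,T) \ge t(2,H)$. The residual case (b) is where I expect the genuine difficulty to lie. Here a prime $r$ of the solvable radical is non-adjacent to $2$, and one must show this can only happen in very small configurations: analysing the Frobenius or $2$-Frobenius structure imposed locally (via Theorem~\ref{GKthm}) on the subgroup generated by the Sylow subgroups at $2$ and $r$ should force $t(H) = 3$ and $t(2,H) = 2$, after which a case-check against the known prime graphs of the non-abelian simple groups isolates precisely those $T$ admitting a solvable extension that carries a prime non-adjacent to $2$, namely $T \cong \mathrm{Alt}_7$ or $\mathrm{PSL}_2(q)$ with $q$ odd. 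The main obstacle throughout is that parts (ii) and (iii)(b) rest squarely on CFSG, both through Schreier's conjecture and through the explicit classification of the prime graphs of the simple groups; without this input the distribution of cocliques across the three layers $R$, $T$, and $\hat H/T$ cannot be pinned down, and in particular the short list in (iii)(b) cannot be obtained.
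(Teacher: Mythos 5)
First, a point of orientation: the paper does not prove Theorem~\ref{GKthmVasilev} at all --- it is quoted verbatim from Vasil'ev and Gorshkov \cite[Theorem~1]{VasilevGorshkov} (see also \cite{Vasilev}), so there is no in-paper proof to compare your attempt against. Judged on its own terms, your proposal is a reasonable roadmap of the known proof strategy (pass to $\hat{H}=H/R(H)$, use Feit--Thompson to make $2$ adjacent to everything across distinct socle factors, use Schreier's conjecture to make $\hat{H}/\mathrm{soc}(\hat{H})$ soluble, use the Gruenberg--Kegel structure of soluble groups to control $R(H)$), but as a proof it has genuine gaps at each of the three decisive steps.

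In part (i), your conclusion that the configuration ``squeezes every coclique of $\Gamma(H)$ down to size at most $2$'' does not follow from what you established. If $\mathrm{soc}(\hat{H})=T_1\times\cdots\times T_k$ with $k\geq 2$, your adjacency observation only shows that a coclique cannot contain primes exclusive to two distinct factors and that $2$ is adjacent to every prime of the socle; a coclique of size $3$ lying entirely inside $\pi(T_1)$, or meeting $\pi(R(H))$, is not excluded by anything you wrote, so no contradiction with $t(H)\geq 3$ is reached. In part (ii), the mechanism you invoke --- that coprime action of an element of order $r$ on a soluble normal $p$-section ``forces an adjacency'' --- is false in general: coprime actions can be fixed-point-free (this is exactly the phenomenon the rest of the paper exploits when it builds affine groups $V{:}S$ with elements acting without non-zero fixed vectors), so no element of order $pr$ is produced for free. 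The genuine argument needs the presence of all three pairwise non-adjacent primes together with Hall--Higman-type restrictions on fixed-point-free actions, and it must separately dispose of the case where all three primes of $\rho$ divide $|R(H)|$ (there a Hall $\{p,q,r\}$-subgroup of $R(H)$ would be a soluble group whose prime graph has three components, contradicting Theorem~\ref{GKthm}). Finally, the list $T\cong\textnormal{Alt}_7$ or $\textnormal{PSL}_2(q)$ in (iii)(b) is not obtainable by ``a case-check against the known prime graphs of the simple groups'': it is a statement about which simple groups admit a coprime module carrying the required fixed-point-free behaviour, and it rests on a specific classification theorem, not on the shape of $\Gamma(T)$ alone. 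In short, the skeleton is right but the load-bearing steps are asserted rather than proved; for the purposes of this paper the correct move is simply to cite \cite{VasilevGorshkov}.
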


\begin{remark} \label{rem:S=T}
Suppose that $H$ is a group with disconnected prime graph and that we have managed to deduce that $H$ is in case (iii) of Theorem~\ref{GKthm}, so that the quotient of $H$ by its Fitting subgroup $F(H)$ is almost simple with socle $S$ for some non-abelian simple group $S$. 
If we instead take the quotient of $H$ by the potentially larger soluble normal subgroup $R(H)$, then we again obtain an almost simple group with the same socle $S$. 
If $H$ also satisfies the hypotheses of Theorem~\ref{GKthmVasilev}, then it follows that the non-abelian simple group $T$ arising in Theorem~\ref{GKthmVasilev} must be isomorphic to $S$. 
(Note that the hypotheses of Theorem~\ref{GKthmVasilev} do not guarantee that $H$ is in case (iii) of Theorem~\ref{GKthm}, because $H$ could be a Frobenius group with Frobenius complement $C$ such that $\Gamma(C)$ is not a complete graph.)
\end{remark}

We are able to apply Theorems~\ref{GKthm} and~\ref{GKthmVasilev} to prove Theorem~\ref{thm1} because every sporadic group $G$ has disconnected prime graph. 
Theorems~\ref{GKthm} and \ref{GKthmVasilev} tell us, in particular, that we need to compare $\Gamma(G)$ with $\Gamma(S)$ for various simple groups $S$, in order to determine whether some extension $H$ of $S$ satisfies $\Gamma(H) \cong \Gamma(G)$. 
We therefore also need to know the connected components of the prime graphs of various simple groups. 
As explained in \cite[pp.~187--188]{CamMas}, this data has been catalogued across various papers. 
We summarise what we need in the following theorem, which is based on \cite[Lemma~1.1(c)]{AlekseevaKondratev}. 
(Note that there seem to be some notational inconsistencies in \cite{AlekseevaKondratev}, e.g. in the fourth column of each of \cite[Tables~1--3]{AlekseevaKondratev}, the heading $n_2$, which is defined to be the $2$-part of an integer~$n$, should presumably be $\pi_2(P)$, which is the quantity being listed in that column.) 

\begin{theorem} \label{thm:tables}
If $S$ is a finite simple group with $s(S) \geq 3$, then either
\begin{itemize}
\item $s(S)=3$ and $S$ appears in Table~\ref{tab:s(S)=3}, 
\item $s(S)=4$ and $S$ appears in Table~\ref{tab:s(S)=4}, 
\item $s(S)=5$ and $S = \textnormal{E}_8(q)$ with $q \equiv 0, \pm 1 \pmod 5$, or
\item $s(S)=6$ and $S = \textnormal{J}_4$.
\end{itemize}
\end{theorem}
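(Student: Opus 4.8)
The plan is to invoke the Classification of Finite Simple Groups to reduce to the three families---alternating groups, sporadic groups, and groups of Lie type---and then to determine $s(S)$ in each case by assembling the existing classifications of prime graph components. Since the assertion is essentially a compilation of known data (and is stated as being based on \cite[Lemma~1.1(c)]{AlekseevaKondratev}), the substance of the proof lies in collecting the relevant results, reconciling their differing conventions, and correcting the notational inconsistencies flagged in the surrounding text (in particular replacing $n_2$ by $\pi_2(P)$ in the affected column) rather than in new computation.

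First I would dispose of the alternating and sporadic cases. For the alternating groups $\text{Alt}_n$, a prime $p$ in the range $n/2 < p \leq n$ is non-adjacent to every other prime in this range, since two disjoint cycles of such lengths cannot fit on $n$ points, and it is isolated from the small primes exactly when $n-p$ is too small to accommodate a further cycle of the correct (even) permutation parity; a short point-counting argument of this type, originally due to Williams, determines $s(\text{Alt}_n)$ and yields the finite list of alternating groups with $s \geq 3$ appearing in the tables. For the $26$ sporadic groups, $s(S)$ can be read off directly from the spectra recorded in the \textsc{Atlas}: only $\text{J}_4$ attains the maximal value $s = 6$, while the remaining sporadic groups with disconnected prime graph contribute to the $s = 3$ and $s = 4$ entries.

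The substantial case is that of the groups of Lie type, and this is where I expect the main difficulty to lie. Here the key tool is the theory of primitive prime divisors (Zsigmondy primes): for $S$ defined over $\mathbb{F}_q$ with $q = p^f$, each prime $r \neq p$ dividing $|S|$ is governed by its multiplicative order $e$ modulo $q$, so that $r \mid q^e - 1$, and the cyclotomic factors $\Phi_e(q)$ therefore partition the odd primes. Two such primes are adjacent in $\Gamma(S)$ precisely when $S$ contains a maximal torus whose order is divisible by both, so the number of components can be read off from the lattice of maximal tori and the corresponding centralizers of semisimple elements. Carrying this out uniformly across the classical series $\text{PSL}_n(q)$, $\text{PSU}_n(q)$, $\text{PSp}_{2n}(q)$, $\text{P}\Omega_n^{\pm}(q)$ and the exceptional types is the bulk of the work; it is precisely the content of the papers of Williams and Kondrat'ev, whose results (together with the subsequent corrections) I would cite. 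In particular, the value $s = 5$ is attained only by $\text{E}_8(q)$ under the stated congruence $q \equiv 0, \pm 1 \pmod 5$, which is exactly when the relevant torus orders isolate an additional prime.

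Finally I would assemble every simple group with $s(S) \geq 3$ into Tables~\ref{tab:s(S)=3} and~\ref{tab:s(S)=4} together with the two exceptional entries, cross-checking against \cite[Lemma~1.1(c)]{AlekseevaKondratev}. The main obstacle throughout is bookkeeping rather than any single deep step: one must ensure that the congruence conditions on $q$ that trigger extra components are stated consistently, and that no family is double-counted or omitted when its defining parameters degenerate for small $n$ or $q$, where the sporadic coincidences among Lie type, alternating, and sporadic groups occur.
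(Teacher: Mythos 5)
Your proposal is correct and matches the paper's treatment: the paper gives no independent proof of this theorem, simply citing \cite[Lemma~1.1(c)]{AlekseevaKondratev} (which in turn rests on the Williams--Kondrat'ev classification of prime graph components that you describe) and assembling the data into Tables~\ref{tab:s(S)=3} and~\ref{tab:s(S)=4}. Your additional sketch of how those underlying classifications are obtained (cycle-counting for $\textnormal{Alt}_n$, the \textsc{Atlas} for sporadic groups, Zsigmondy primes and maximal tori for Lie type) is accurate but goes beyond what the paper itself supplies.
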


\begin{table}[!t]
\small
\begin{tabular}{lllll}
\toprule
$S$ & Restrictions on $S$ & $\pi_1(S)$ & $\pi_2(S)$ & $\pi_3(S)$ \\
\midrule
$\text{Alt}_p$ & $p > 6$; $p-2$ prime & $(p-3)!$ & $\{p\}$ & $\{p-2\}$ \\
\midrule
\vspace{4pt} %
$\text{PSL}_2(q)$ & $3 < q \equiv \pm 1 \pmod{4}$ & $q \mp 1$ & $q$ & $\frac{q \pm 1}{2}$ \\
\vspace{4pt} %
$\text{PSL}_2(q)$ & $q>2$, $q$ even & $\{2\}$ & $q-1$ & $q+1$ \\
\vspace{4pt} %
$\text{PSU}_6(2)$ & & $\{2,3,5\}$ & $\{7\}$ & $\{11\}$ \\
\vspace{4pt} %
$\text{P}\Omega_{2p}^-(3)$ & $p = 2^m+1$, $m \geq 1$ & $3(3^{p-1}-1) \Pi_{i=1}^{p-2} (3^{2i}-1)$ & $\frac{3^{p-1}+1}{2}$ & $\frac{3^p+1}{4}$ \\
\vspace{4pt} %
$\text{G}_2(q)$ & $q=3^k$, $k \geq 1$ & $q(q^2-1)$ & $q^2-q+1$ & $q^2+q+1$ \\
\vspace{4pt} %
${}^2\text{G}_2(q)$ & $q = 3^{2m+1}$, $m \geq 1$ & $q(q^2-1)$ & $q-\sqrt{3q}+1$ & $q+\sqrt{3q}+1$ \\
\vspace{4pt} %
$\text{F}_4(q)$ & $q$ even & $q(q^4-1)(q^6-1)$ & $q^4-q^2+1$ & $q^4+1$ \\
${}^2\text{F}_4(q)$ & $q = 2^{2m+1}$, $m \geq 1$ & $q(q^3+1)(q^4-1)$ & $q^2-\sqrt{2q^3}$ & $q^2+\sqrt{2q^3}$ \\
\vspace{4pt} %
& & & $+~q-\sqrt{2q}+1$ & $+~q+\sqrt{2q}+1$ \\
$\text{E}_7(2)$ & & $\{2,3,5,7,11,$ & $\{73\}$ & $\{127\}$ \\
\vspace{2pt} 
& & $13,17,19,31,43\}$ & & \\
$\text{E}_7(3)$ & & $\{2,3,5,7,11,13,$ & $\{757\}$ & $\{1093\}$ \\
& & $19,37,41,61,73,547\}$ & & \\
\midrule
\vspace{2pt} 
$\text{M}_{11}$ & & $\{2,3\}$ & $\{5\}$ & $\{11\}$ \\
\vspace{2pt} 
$\text{HS}$ & & $\{2,3,5\}$ & $\{7\}$ & $\{11\}$ \\
\vspace{2pt} 
$\text{J}_3$ & & $\{2,3,5\}$ & $\{17\}$ & $\{19\}$ \\
\vspace{2pt} 
$\text{Co}_2$ & & $\{2,3,5,7\}$ & $\{11\}$ & $\{23\}$ \\
\vspace{2pt} 
$\text{M}_{23}$ & & $\{2,3,5,7\}$ & $\{11\}$ & $\{23\}$ \\
\vspace{2pt} 
$\text{M}_{24}$ & & $\{2,3,5,7\}$ & $\{11\}$ & $\{23\}$ \\
\vspace{2pt} 
$\text{Suz}$ & & $\{2,3,5,7\}$ & $\{11\}$ & $\{13\}$ \\
\vspace{2pt} 
$\text{Th}$ & & $\{2,3,5,7,13\}$ & $\{19\}$ & $\{31\}$ \\
\vspace{2pt} 
$\text{Fi}_{23}$ & & $\{2,3,5,7,11,13\}$ & $\{17\}$ & $\{23\}$ \\
$\mathbb{B}$ & & $\{2,3,5,7,11,13,17,19,23\}$ & $\{31\}$ & $\{47\}$ \\
\bottomrule
\end{tabular}
\caption{The connected components of $\Gamma(S)$ for the finite simple groups $S$ with $s(S) = 3$. 
In the definitions of $S$, $p$ denotes an odd prime. 
In columns 3--5, the $i$th connected component $\pi_i(S)$ of $\Gamma(S)$ is equal to the given entry if set notation $\{ \cdot \}$ is used, and otherwise $\pi_i(S)$ consists of all of the prime divisors of the given entry.
}
\label{tab:s(S)=3}
\end{table}

\begin{table}[!t]
\small
\begin{tabular}{llllll}
\toprule
$S$ & Restrictions on $S$ & $\pi_1(S)$ & $\pi_2(S)$ & $\pi_3(S)$ & $\pi_4(S)$ \\
\midrule
\vspace{4pt} %
$\text{PSL}_3(4)$ &  & $\{2\}$ & $\{3\}$ & $\{5\}$ & \{7\} \\
\vspace{4pt} %
${}^2\text{B}_2(q)$ & $q=2^{2m+1}$, $m \geq 1$ & $\{2\}$ & $q-1$ & $q-\sqrt{2q}+1$ & $q+\sqrt{2q}+1$ \\
\vspace{4pt} %
${}^2\text{E}_6(2)$ & & $\{2,3,5,7,11\}$ & $\{13\}$ & $\{17\}$ & $\{19\}$ \\
$\text{E}_8(q)$ & $q \equiv 2,3 \pmod{5}$ & $q(q^8-1)(q^{12}-1)$ & $\frac{q^{10}+q^5+1}{q^2+q+1}$ & $q^8-q^4+1$ & $\frac{q^{10}-q^5+1}{q^2-q+1}$ \\
& & $(q^{14}-1)(q^{18}-1)(q^{20}-1)$ & & \\
\midrule
\vspace{2pt} 
$\text{M}_{22}$ & & $\{2,3\}$ & $\{5\}$ & $\{7\}$ & $\{11\}$ \\
\vspace{2pt} 
$\text{J}_1$ & & $\{2,3,5\}$ & $\{7\}$ & $\{11\}$ & $\{19\}$ \\
\vspace{2pt} 
$\text{O'N}$ & & $\{2,3,5,7\}$ & $\{11\}$ & $\{19\}$ & $\{31\}$ \\
\vspace{2pt} 
$\text{Ly}$ & & $\{2,3,5,7,11\}$ & $\{31\}$ & $\{37\}$ & $\{67\}$ \\
\vspace{2pt} 
$\text{Fi}_{24}'$ & & $\{2,3,5,7,11,13\}$ & $\{17\}$ & $\{23\}$ & $\{29\}$ \\
$\mathbb{M}$ & & $\{2,3,5,7,11,13,$ & $\{41\}$ & $\{59\}$ & $\{71\}$ \\
& & $17,19,23,29,31,47\}$ & & & \\
\bottomrule
\end{tabular}
\caption{The connected components of $\Gamma(S)$ for the finite simple groups $S$ with $s(S) = 4$. 
In columns 3--6, $\pi_i(S)$ is equal to the given entry if set notation $\{ \cdot \}$ is used, and otherwise $\pi_i(S)$ consists of all of the prime divisors of the given entry. 
(See Theorem~\ref{thm:tables} for the cases with $s(S) > 4$.)
}
\label{tab:s(S)=4}
\end{table}

Note that Tables~\ref{tab:s(S)=3} and~\ref{tab:s(S)=4} provide only very basic information about $\Gamma(S)$ for each listed simple group $S$, namely the sets of prime divisors comprising the connected components of $\Gamma(S)$. 
In many cases, we need more detailed information about the structure of $\Gamma(S)$. 
This information is collected in several preliminary lemmas given below. 
The proofs of some of these lemmas use the well-known Bang--Zsigmondy lemma, so we state that first. 

\begin{definition} \label{def:ppd}
Let $q$ be a positive integer. 
If $r$ is an odd prime coprime to $q$, then~we define $e(r,q)$ to be the multiplicative order of $q$ modulo $r$, i.e. the least positive integer $n$ such that $r$ divides $q^n-1$. 
If $q$ is odd, then $e(2,q):=1$ if $q \equiv 1 \pmod{4}$, and $e(2,q):=2$ if $q \equiv -1 \pmod{4}$. 
A prime $r$ is said to be a {\em primitive prime divisor} of $q^n-1$ if $e(r,q)=n$. 
\end{definition}

\begin{lemma}[Bang--Zsigmondy \cite{bang,zsigmondy}] \label{lem:zsigmondy}
If $q \geq 2$ and $n \geq 1$ are integers, then there exists a primitive prime divisor of $q^n-1$ unless $(q,n)=(2,6)$, $(2,1)$, or $(3,1)$. 
\end{lemma}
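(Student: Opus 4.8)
The plan is to derive this formulation, whose exception list is tailored to the nonstandard definition of $e(2,q)$ in Definition~\ref{def:ppd}, from the classical form of the Bang--Zsigmondy theorem. In its classical form the latter asserts that for integers $q \geq 2$ and $n \geq 1$ there is a prime $r$ with $r \mid q^n - 1$ and $r \nmid q^k - 1$ for all $1 \leq k < n$ (equivalently, the multiplicative order of $q$ modulo $r$ equals $n$), except when $(q,n) = (2,1)$, when $n = 2$ and $q+1$ is a power of $2$, or when $(q,n) = (2,6)$. For every \emph{odd} prime $r$, the quantity $e(r,q)$ of Definition~\ref{def:ppd} is exactly this multiplicative order, so an odd prime is a primitive prime divisor of $q^n - 1$ in the sense of the paper precisely when it is one classically. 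The whole discrepancy between the classical exceptions and those in the statement therefore concerns only the prime $r = 2$, and the plan is to analyse its contribution separately according to the value of $n$.

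First I would dispose of the case $n \geq 3$. Here $2 \nmid q^n - 1$ when $q$ is even, while $e(2,q) \in \{1,2\}$ when $q$ is odd, so $2$ is never a primitive prime divisor of $q^n-1$; moreover every classical primitive prime divisor is odd once $n \geq 2$, since the order of $q$ modulo $2$ is $1$. Thus for $n \geq 3$ the paper's notion and the classical notion coincide exactly, and the classical theorem leaves only the single exceptional pair $(q,n) = (2,6)$, at which $2^6 - 1 = 3^2 \cdot 7$ and neither $3$ nor $7$ has order $6$.

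The remaining and only genuinely delicate work is to track how the modified definition of $e(2,q)$ reshapes the small cases $n = 1$ and $n = 2$. For $n = 2$ a classical primitive prime divisor is an odd prime dividing $q+1$, which fails to exist exactly when $q+1$ is a power of $2$; but in that situation $q$ is odd with $q+1 \geq 4$, forcing $q \equiv 3 \pmod 4$, so $e(2,q) = 2$ and $2$ itself becomes a primitive prime divisor in the sense of Definition~\ref{def:ppd}. Hence the classical $n=2$ exceptions are all absorbed and $n=2$ contributes none. For $n = 1$ one needs an odd prime dividing $q - 1$, or else $e(2,q) = 1$, i.e. $q \equiv 1 \pmod 4$; a primitive prime divisor thus fails to exist precisely when $q - 1$ is a power of $2$ and $q \not\equiv 1 \pmod 4$. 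Writing $q - 1 = 2^s$ and checking residues shows this happens only for $q = 2$ (with $s = 0$) and $q = 3$ (with $s = 1$), yielding the exceptions $(2,1)$ and $(3,1)$, while $s \geq 2$ forces $q \equiv 1 \pmod 4$ and so no further exceptions arise. Collecting the three cases gives exactly the stated exceptional pairs.

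I expect the main obstacle to be the bookkeeping around the prime $2$: the modified definition simultaneously \emph{removes} the infinite family of classical exceptions at $n = 2$ and \emph{creates} the single new exception $(3,1)$ at $n = 1$. One must therefore be careful to separate the existence of \emph{odd} primitive prime divisors, which is handed over wholesale by the classical theorem, from the case-dependent behaviour of $2$ dictated by Definition~\ref{def:ppd}.
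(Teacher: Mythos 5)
Your proposal is correct, and it actually does more than the paper, which offers no proof of this lemma at all: it simply cites Bang and Zsigmondy for the classical statement and silently adopts the exception list adapted to the convention for $e(2,q)$ in Definition~\ref{def:ppd}. Your derivation supplies exactly the missing reconciliation. The key points all check out: for $n\geq 3$ the prime $2$ can never be primitive (either $q^n-1$ is odd or $e(2,q)\in\{1,2\}$), so the paper's notion agrees with the classical one and only $(2,6)$ survives; for $n=2$ the classical exceptions $q+1=2^s$ force $q\equiv 3\pmod 4$, whence $e(2,q)=2$ and $2$ itself is primitive, absorbing that whole infinite family; and for $n=1$ the failure condition ``$q-1$ a power of $2$ and $q\not\equiv 1\pmod 4$'' pins down precisely $q\in\{2,3\}$, since $q-1=2^s$ with $s\geq 2$ gives $q\equiv 1\pmod 4$. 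This is exactly the bookkeeping needed to justify the statement as printed, and it would be a reasonable footnote or remark to attach to the lemma; the only stylistic caveat is that one should say explicitly (as you implicitly do) that for even $q$ the prime $2$ is simply never a candidate under Definition~\ref{def:ppd}, so the $n=1$, $q=2$ case is an exception for the trivial reason that $q-1=1$.
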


The following lemma concerns the (prime graph of the) sporadic group $\text{J}_4$.

\begin{lemma} \label{lemma:J4}
If $S = \textnormal{J}_4$, then $\Gamma(S)$ is the following graph; in particular, $|\pi(S)|=10$.
\textnormal{
\begin{center}
\begin{tikzpicture}
\graph [no placement] { 
37[x=-2,y=-0.5]; 43[x=-2,y=0.5]; 31[x=-1,y=0.5]; 
29[x=-1,y=-0.5]; 23[x=0,y=-0.5]; 
11[x=0,y=0.5]; 
3[x=1,y=-0.5]; 2[x=1,y=0.5]; 
7[x=2,y=-0.5]; 5[x=2,y=0.5]; 
2 -- {3,5,7,11}; 
3 -- {5,7,11}; 
5 -- 7
};
\end{tikzpicture}
\end{center}
}
\end{lemma}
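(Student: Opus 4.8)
The plan is to verify the claim by a direct computation from the order and spectrum (set of element orders) of $\text{J}_4$, both of which are recorded in the ATLAS of Finite Groups (equivalently, in the GAP character table library). First I would note the factorisation
\[
|\text{J}_4| = 2^{21} \cdot 3^3 \cdot 5 \cdot 7 \cdot 11^3 \cdot 23 \cdot 29 \cdot 31 \cdot 37 \cdot 43,
\]
so that $\pi(\text{J}_4) = \{2,3,5,7,11,23,29,31,37,43\}$ and hence $|\pi(S)| = 10$; this pins down the ten vertices of $\Gamma(S)$.

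To determine the edges, recall that $\{p,q\}$ is an edge of $\Gamma(S)$ exactly when some element order of $S$ is divisible by $pq$, and that the set of element orders is closed under taking divisors (via powering). It therefore suffices to scan the spectrum of $\text{J}_4$, which is
\[
\{1,2,3,4,5,6,7,8,10,11,12,14,15,16,20,21,22,23,24,28,29,30,31,33,35,37,40,42,43,44,66\},
\]
for multiples of each product $pq$ of distinct primes in $\pi(S)$. Reading off the relevant entries, the orders $6,10,14,22$ produce the edges joining $2$ to $3,5,7,11$; the orders $15,21,33$ produce the edges joining $3$ to $5,7,11$; and the order $35$ produces the edge $\{5,7\}$. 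The only other pairs within $\{2,3,5,7,11\}$ are $\{5,11\}$ and $\{7,11\}$, and these are non-edges because the element orders divisible by $5$ (namely $5,10,15,20,30,35,40$) and by $7$ (namely $7,14,21,28,35,42$) are all coprime to $11$, so no element order is divisible by $55$ or $77$. Finally, each of $23,29,31,37,43$ occurs in the spectrum only as the order of an element of that prime order, with no entry a proper multiple of any of them, so these five primes are isolated vertices. This reproduces exactly the displayed graph.

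As a consistency check, the graph so obtained has connected components $\{2,3,5,7,11\}$ together with the five singletons $\{23\},\{29\},\{31\},\{37\},\{43\}$, giving $s(\text{J}_4)=6$ in agreement with Theorem~\ref{thm:tables}. There is no genuine conceptual obstacle here: the argument is a finite verification against an authoritative source. The one point demanding care is \emph{completeness} of the spectrum, since the claimed non-adjacencies (the pairs $\{5,11\}$ and $\{7,11\}$, and every pair involving one of the isolated primes) must be genuine absences rather than gaps in an incomplete list; working from the full conjugacy class data of $\text{J}_4$ rather than a partial enumeration guarantees this.
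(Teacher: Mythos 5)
Your proposal is correct and takes essentially the same approach as the paper, which simply states that the lemma is readily verified from the character table of $\textnormal{J}_4$ in the ATLAS; you have just carried out that verification explicitly from the order factorisation and the spectrum.
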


\begin{proof}
This is readily verified using the character table of $\text{J}_4$; see e.g. \cite[pp.~188--198]{ATLAS}.
\end{proof}

The following result concerns the simple groups $\text{PSL}_2(q)$, $q \geq 4$. 
It is surely well known, but we were not able to find a proof to cite.

\begin{lemma} \label{lemma:A1}
If $S = \textnormal{PSL}_2(q)$ with $q \geq 4$, then $\Gamma(S)$ consists of three cliques. 
In particular, the maximal size of a coclique in $\Gamma(S)$ is $3$, i.e. $t(S)=3$. 
\end{lemma}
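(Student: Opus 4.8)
The plan is to analyse the three connected components of the prime graph of $S = \textnormal{PSL}_2(q)$ directly, using the well-known description of element orders in $\textnormal{PSL}_2(q)$, and show that each component is a complete graph. The key structural fact is that every element of $S$ lies in (a conjugate of) one of three types of cyclic subgroups: a Sylow $p$-subgroup of order $q$ (where $q = p^f$), a cyclic torus of order $(q-1)/d$, or a cyclic torus of order $(q+1)/d$, where $d = \gcd(2, q-1)$. These three cyclic subgroups are the maximal tori (and the unipotent subgroup), and any cyclic subgroup of $S$ is conjugate into one of them; consequently every element order divides $q$, $(q-1)/d$, or $(q+1)/d$.

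\textbf{Key steps.} First I would split into the cases $q$ even and $q$ odd, which control the value of $d$ and the placement of the prime $2$, matching the two rows for $\textnormal{PSL}_2(q)$ in Table~\ref{tab:s(S)=3}. Second, for each of the three cyclic subgroup orders $m \in \{q, (q-1)/d, (q+1)/d\}$, I would observe that any two primes $r, s$ dividing the same $m$ are adjacent in $\Gamma(S)$: since the relevant subgroup is cyclic of order $m$, it contains an element of order $rs$, so $\{r,s\}$ is an edge. This shows that the prime divisors of each $m$ form a clique. Third, I would verify that these three sets of primes are pairwise disjoint and that no edges run between them, i.e. that $S$ has no element whose order is divisible by primes from two different tori. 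This is exactly the statement that the three components listed in Table~\ref{tab:s(S)=3} are the connected components, and it follows from the fact that $\gcd$ of any two of $q$, $(q-1)/d$, $(q+1)/d$ is $1$ (after accounting for the factor of $d$ when $q$ is odd), together with the fact that an element of $S$ of composite order $rs$ must lie in a single cyclic torus or unipotent subgroup. Since $\Gamma(S)$ thus decomposes into three vertex-disjoint cliques covering all of $\pi(S)$, a maximal coclique can contain at most one vertex from each clique, giving $t(S) = 3$ (and exactly $3$, since $s(S) = 3$ guarantees all three components are nonempty).

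\textbf{Main obstacle.} The genuinely substantive point — and the one requiring care rather than routine checking — is the claim that every element of $S$ of order divisible by two distinct primes is contained in a single one of the three cyclic subgroups, equivalently that there are no ``cross'' edges between the three prime sets. This rests on the subgroup structure of $\textnormal{PSL}_2(q)$: the maximal subgroups are the Borel subgroup (normaliser of the unipotent subgroup), the dihedral normalisers of the two types of tori, and a bounded collection of small subgroups ($\textnormal{Alt}_4$, $\textnormal{Sym}_4$, $\textnormal{Alt}_5$, and subfield subgroups $\textnormal{PSL}_2(q_0)$), and one must confirm that none of these small exceptional subgroups introduces an element whose order bridges two of the three prime sets. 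For the small subgroups this is a finite check: their orders involve only the primes $2, 3, 5$, and one verifies these do not create unwanted adjacencies beyond those already inside a single component. The cleanest route, which avoids case analysis on maximal subgroups, is to invoke the explicit list of element orders of $\textnormal{PSL}_2(q)$ (namely $p$, the divisors of $(q-1)/d$, and the divisors of $(q+1)/d$), from which both the clique structure and the absence of cross edges follow immediately; the independence number $t(S) = 3$ is then an immediate corollary.
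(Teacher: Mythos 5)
Your proposal is correct, and it rests on the same underlying structure as the paper's proof (the cyclic tori of orders $(q\pm1)/d$ in $\textnormal{PSL}_2(q)$), but it is packaged differently in a way worth noting. The paper takes the connected components as given by Theorem~\ref{thm:tables}, and then proves each is a clique by exhibiting cyclic subgroups inside the maximal dihedral subgroups $\textnormal{D}_{2(q\pm1)}$ or $\textnormal{D}_{q\pm1}$ (plus the centraliser of an involution for the component containing $2$); this forces it to treat $q\in\{7,9,11\}$ separately, since for those values the relevant dihedral subgroups are not maximal or the argument degenerates. Your route via Dickson's list of element orders --- every element order is $p$ or a divisor of $(q-1)/d$ or $(q+1)/d$, and these three numbers are pairwise coprime --- delivers the components and the clique structure simultaneously and uniformly, with no small-$q$ exceptions and no appeal to the maximal subgroup classification; the cost is that you must cite (or reprove) the spectrum of $\textnormal{PSL}_2(q)$, which is classical but is exactly the kind of statement the authors say they could not find a clean reference for. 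One small inaccuracy to fix: a Sylow $p$-subgroup of $\textnormal{PSL}_2(p^f)$ is elementary abelian of order $p^f$, not cyclic, for $f>1$; this is harmless here because all its nontrivial elements have order $p$ and the corresponding component is the single vertex $\{p\}$ (or $\{2\}$), but you should not describe it as a cyclic subgroup of order $q$. Also, your ``main obstacle'' paragraph about checking the exceptional maximal subgroups $\textnormal{Alt}_4$, $\textnormal{Sym}_4$, $\textnormal{Alt}_5$, $\textnormal{PSL}_2(q_0)$ is unnecessary once you commit to the element-order argument: the spectrum already accounts for every element of $S$, wherever it lives.
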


\begin{proof}
Theorem~\ref{thm:tables} tells us that $\Gamma(S)$ has three connected components (see Table~\ref{tab:s(S)=3}). 
It remains to confirm that each one is a clique. 
The maximal subgroups of $S$ are well known; a convenient reference is \cite{giudici}. 
If $q$ is even, then the connected components of $\Gamma(S)$ are $\{2\}$ and the sets of prime divisors of $q \pm 1$. 
Because $S$ contains maximal subgroups $\text{D}_{2(q \pm 1)}$, it contains cyclic subgroups of order $q \pm 1$, so the latter two components are cliques. 
Suppose that $q \neq 3$ is a power of an odd prime $p$. 
If $q \equiv 1 \pmod{4}$, then the connected components of $\Gamma(S)$ are $\{p\}$ and the sets of prime divisors of $\frac{q+1}{2}$ and $q -1$, the latter of which contains $2$. 
If $q=9,$ then each component is an isolated vertex. 
If $q \neq 9$, then $S$ contains maximal subgroups $\text{D}_{q\pm 1}$ and hence cyclic subgroups of order $\frac{q\pm 1}{2}$. 
In particular, the second connected component is a clique. 
The third connected component is also a clique: the odd primes dividing $q -1$ form a clique by the existence of the cyclic subgroup of order $\frac{q-1}{2}$, and they are all adjacent to $2$ because the centraliser of an involution in $G$ is isomorphic to $\text{D}_{q-1}$. 
If $q \equiv -1 \pmod{4}$, then for $q \not \in \{7,11\}$ the same argument applies if we swap the $+$ and $-$ signs. 
If $q \in \{7,11\}$, then each component is an isolated vertex. 
\end{proof}

The following lemma concerns certain orthogonal groups appearing in Table~\ref{tab:s(S)=3}. 

\begin{lemma} \label{lemma:2Dp}
If $S = \textnormal{P}\Omega_{2p}^-(3)$ with $p=2^m+1$ a prime, $m \geq 1$, then $|\pi(S)| \geq p+1$.
\end{lemma}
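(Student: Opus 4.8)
The plan is to produce $p+1$ distinct prime divisors of $|S|$ by exploiting the three-component structure of $\Gamma(S)$ recorded in Table~\ref{tab:s(S)=3}, together with the Bang--Zsigmondy lemma (Lemma~\ref{lem:zsigmondy}). Since $s(S)=3$, the graph $\Gamma(S)$ has three pairwise disjoint, nonempty connected components $\pi_1(S)$, $\pi_2(S)$, $\pi_3(S)$, whose vertex sets partition $\pi(S)$. It therefore suffices to show that $\pi_1(S)$ alone contains at least $p-1$ primes; the two remaining components then contribute at least one prime each, and disjointness gives the bound.

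To count the primes in $\pi_1(S)$, recall from Table~\ref{tab:s(S)=3} that $\pi_1(S)$ consists of the prime divisors of $3(3^{p-1}-1)\prod_{i=1}^{p-2}(3^{2i}-1)$. For each $i \in \{1,\ldots,p-2\}$ I would apply Lemma~\ref{lem:zsigmondy} with $q=3$ and $n=2i$ (legitimate since $2i \geq 2$, whereas the only excluded exponent for $q=3$ is $n=1$) to obtain a primitive prime divisor $r_{2i}$ of $3^{2i}-1$, i.e. a prime with $e(r_{2i},3)=2i$. Each such $r_{2i}$ divides $\prod_{i=1}^{p-2}(3^{2i}-1)$ and hence lies in $\pi_1(S)$. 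These $p-2$ primes are pairwise distinct, because a prime is a primitive prime divisor of $3^n-1$ for at most one value of $n$, namely $n=e(r,3)$; moreover each is coprime to $3$. Adjoining the prime $3$ itself then gives $|\pi_1(S)| \geq (p-2)+1 = p-1$.

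Combining this with the fact that $\pi_2(S)$ and $\pi_3(S)$ are nonempty and disjoint from $\pi_1(S)$ yields $|\pi(S)| \geq (p-1)+1+1 = p+1$, as required. The only point that requires care is the verification that the $r_{2i}$ are genuinely distinct and all different from $3$, which reduces to the elementary observation about multiplicative orders above, so I anticipate no serious obstacle. I note that the hypothesis $p=2^m+1$ prime enters only insofar as it guarantees that $S$ is the group listed in Table~\ref{tab:s(S)=3} with $s(S)=3$; the counting itself uses nothing beyond $p$ being odd and at least $3$.
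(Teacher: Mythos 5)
Your proposal is correct and follows essentially the same route as the paper: both count primitive prime divisors of the factors $3^{2i}-1$ for $i=1,\dots,p-2$ via Lemma~\ref{lem:zsigmondy}, add the prime $3$ to get at least $p-1$ primes in $\pi_1(S)$, and then add one prime from each of the other two components. Your explicit check that the primitive prime divisors are pairwise distinct (including the $i=1$ case, where the divisor is $2$ under Definition~\ref{def:ppd}) matches the paper's parenthetical remark on that point.
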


\begin{proof}
By Theorem~\ref{thm:tables} (Table~\ref{tab:s(S)=3}), $\Gamma(S)$ has three connected components. The component $\pi_1(S)$ consists of all of the prime divisors of $3(3^{p-1}-1) \Pi_{i=1}^{p-2} (3^{2i}-1)$. 
By Lemma~\ref{lem:zsigmondy}, each factor in the product $\Pi_{i=1}^{p-2} (3^{2i}-1)$ has a primitive prime divisor. 
Together with the prime $3$, this gives us at least $p-1$ primes in $\pi_1(S)$. 
(Note that $2$ is a primitive prime divisor of $3^2-1$ by Definition~\ref{def:ppd}.)
Considering also the other two connected components of $\Gamma(S)$, we conclude that there are at least $(p-1)+2=p+1$ primes in $\pi(S)$.
\end{proof}

The remaining preliminary lemmas concern various exceptional groups of Lie type. 
(Note that although we refer to \cite{NMexceptional} for the proofs of Lemmas~\ref{lemma:G2}--\ref{lemma:2F4}, these results were proved in \cite{VasVdo1,VasVdo2}. 
The information that we need is collected in a convenient way in \cite{NMexceptional}.)

\begin{lemma} \label{lemma:E8}
If $S = \textnormal{E}_8(q)$, then $|\pi(S)| \geq 16$, with strict inequality for $q > 2$. 
\end{lemma}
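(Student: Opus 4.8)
The plan is to bound $|\pi(S)|$ below by applying the Bang--Zsigmondy lemma (Lemma~\ref{lem:zsigmondy}) to the cyclotomic factors appearing in the order of $\textnormal{E}_8(q)$. First I would record the standard factorisation
\[
|\textnormal{E}_8(q)| = q^{120}(q^2-1)(q^8-1)(q^{12}-1)(q^{14}-1)(q^{18}-1)(q^{20}-1)(q^{24}-1)(q^{30}-1),
\]
in which the exponents $D := \{2,8,12,14,18,20,24,30\}$ are the degrees of the fundamental invariants of the Weyl group of type $\textnormal{E}_8$. The prime $p$ dividing $q$ accounts for the factor $q^{120}$ and contributes one element of $\pi(S)$; every other prime divisor of $|S|$ is coprime to $q$ and hence has a well-defined value $e(r,q)$.

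The key point is that a prime $r$ coprime to $q$ divides $|S|$ if and only if $e(r,q)$ divides some $d \in D$, since $r \mid q^d-1$ exactly when $e(r,q) \mid d$. Writing $N$ for the set of positive integers dividing at least one element of $D$, a direct enumeration yields
\[
N = \{1,2,3,4,5,6,7,8,9,10,12,14,15,18,20,24,30\},
\]
so that $|N|=17$. For each $n \in N$ such that $q^n-1$ admits a primitive prime divisor $r_n$ (meaning $e(r_n,q)=n$), the prime $r_n$ divides $|S|$; these $r_n$ are pairwise distinct, as their $e$-values differ, and each is distinct from $p$ because it is coprime to $q$. Hence $|\pi(S)|$ is at least $1$ plus the number of $n \in N$ for which $q^n-1$ has a primitive prime divisor.

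Finally I would discard the Bang--Zsigmondy exceptions $(q,n) \in \{(2,6),(2,1),(3,1)\}$. For $q \geq 4$ none of these applies, so all $17$ values of $n \in N$ contribute and $|\pi(S)| \geq 18$. For $q=3$ only the value $n=1$ is lost, while $n=2$ survives because $2$ is the primitive prime divisor of $3^2-1$ under Definition~\ref{def:ppd}; this gives $|\pi(S)| \geq 17$. For $q=2$ the values $n=1$ and $n=6$ both lie in $N$ and are lost, leaving $|\pi(S)| \geq 16$. In every case $|\pi(S)| \geq 16$, with strict inequality for $q>2$. I expect the only delicate step to be this final bookkeeping: one must verify that $2$ is never double-counted (it is the primitive prime divisor of $q^n-1$ for the single value $n=e(2,q) \in \{1,2\}$) and that each exceptional pair is matched correctly against $N$. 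The rest is a routine enumeration.
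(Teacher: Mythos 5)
Your argument is correct, but it takes a different route from the paper: the paper does not prove this lemma at all, instead citing \cite[Lemma~2.9]{CamMas}, whereas you give a self-contained proof from the order formula for $\textnormal{E}_8(q)$ and the Bang--Zsigmondy lemma. Your bookkeeping checks out: the set $N$ of divisors of elements of $\{2,8,12,14,20,24,30,18\}$ does have $17$ elements; the primitive prime divisors $r_n$ for distinct $n\in N$ are pairwise distinct (their $e$-values differ) and coprime to $q$, hence distinct from the defining characteristic; and under Definition~\ref{def:ppd} the prime $2$ is a primitive prime divisor of $q^n-1$ for exactly one $n\in\{1,2\}\subseteq N$, so it is counted once. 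Matching the exceptional pairs $(2,1)$, $(2,6)$, $(3,1)$ of Lemma~\ref{lem:zsigmondy} against $N$ then gives $|\pi(S)|\ge 1+15=16$ for $q=2$, $\ge 1+16=17$ for $q=3$, and $\ge 1+17=18$ for $q\ge 4$, which is exactly the statement (and in fact slightly sharper for $q>2$, and tight for $q=2$, where $|\pi(\textnormal{E}_8(2))|=16$). What your approach buys is independence from the external reference and an explicit record of where the bound comes from; what the citation buys is brevity. The only point worth being careful about, which you flag correctly, is that the paper's version of Bang--Zsigmondy already absorbs the usual ``$n=2$, $q+1$ a power of $2$'' exception into the modified definition of $e(2,q)$, so your list of exceptions is the right one to use here.
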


\begin{proof}
See \cite[Lemma~2.9]{CamMas}.
\end{proof}

\begin{lemma} \label{lemma:G2}
If $S = \textnormal{G}_2(3^k)$, $k \geq 1$, then $\pi(S)$ consists of three cliques. 
In particular, the maximal size of a coclique in $\Gamma(S)$ is $3$, i.e. $t(S)=3$. 
\end{lemma}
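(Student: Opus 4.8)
The plan is to combine the component data in Table~\ref{tab:s(S)=3} with standard information about the maximal tori and subsystem subgroups of $S = \textnormal{G}_2(q)$, where $q = 3^k$. By Theorem~\ref{thm:tables} (Table~\ref{tab:s(S)=3}), $\Gamma(S)$ has exactly three connected components, namely $\pi_1(S) = \pi(q(q^2-1))$, $\pi_2(S) = \pi(q^2-q+1)$ and $\pi_3(S) = \pi(q^2+q+1)$. Since there are three nonempty components, choosing one prime from each yields a coclique of size $3$, so $t(S) \geq 3$; conversely, once we show that each component is a clique, no coclique can contain two vertices lying in the same component, and hence $t(S) = 3$. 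So the whole lemma reduces to showing that $\pi_1(S)$, $\pi_2(S)$ and $\pi_3(S)$ are cliques.

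The two non-trivial components away from the characteristic are straightforward. The group $S$ contains cyclic maximal tori of orders $q^2-q+1$ and $q^2+q+1$; since a cyclic group of order $n$ has an element of order $rs$ for any two distinct primes $r,s \mid n$, every pair of primes dividing $q^2-q+1$ (respectively $q^2+q+1$) is joined in $\Gamma(S)$. Hence $\pi_2(S)$ and $\pi_3(S)$ are cliques.

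The substance of the argument is showing that $\pi_1(S) = \pi(q(q^2-1))$ is a clique. First I would dispose of the primes dividing $q^2-1$ using a maximal torus $T$ of $S$ of order $q^2-1$: from its cyclic subgroups $C_{q-1}, C_{q+1} \leq T$ one reads off that any two primes dividing $q-1$, any two dividing $q+1$, and any prime dividing $q-1$ together with any prime dividing $q+1$, are all adjacent (here $\gcd(q-1,q+1)=2$ causes no difficulty). It then remains to connect the characteristic prime $3$ to every other prime in $\pi_1(S)$. For the odd primes $r \mid q^2-1$ I would use the maximal-rank subgroup $\textnormal{SL}_2(q) \circ \textnormal{SL}_2(q) \leq S$ of type $A_1\tilde{A}_1$: a unipotent element of order $3$ in one factor commutes with an element of order $r$ in a suitable (split or non-split) torus of the other factor, and since $3 \nmid q^2-1$ their product has order $3r$; taking instead the central involution times a unipotent element of order $3$ produces an element of order $6$ and thus joins $2$ to $3$. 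These adjacencies together show that $\pi_1(S)$ is complete.

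I expect the verification that $3$ is adjacent to all of $\pi(q^2-1)$ to be the main obstacle, as it is the only adjacency not furnished by a single cyclic torus and instead relies on the existence and element-order structure of the subsystem subgroup $\textnormal{SL}_2(q)\circ\textnormal{SL}_2(q)$. Rather than reproving the needed facts about tori and subsystem subgroups of $\textnormal{G}_2(q)$ from first principles, I would quote the spectrum and coclique computations of Vasil'ev and Vdovin~\cite{VasVdo1,VasVdo2} (conveniently collected in~\cite{NMexceptional}), which record precisely these adjacencies and already establish $t(\textnormal{G}_2(q))=3$.
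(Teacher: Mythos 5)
Your proposal is correct and, in the end, rests on exactly the same citation as the paper: the paper's entire proof is to invoke \cite[Lemma 3.4]{NMexceptional} (see also \cite{VasVdo1,VasVdo2}). The additional direct argument you sketch --- cyclic tori of orders $q^2\pm q+1$ for the two small components, a torus of order $q^2-1$ for the primes away from the characteristic, and the involution centraliser $\mathrm{SL}_2(q)\circ\mathrm{SL}_2(q)$ to join $3$ to everything in $\pi_1(S)$ --- is sound and correctly identifies where the real work lies, but the paper does not carry it out.
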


\begin{proof}
This follows from \cite[Lemma 3.4]{NMexceptional}; see also \cite{VasVdo1,VasVdo2}.
\end{proof}

\begin{lemma} \label{lemma:F4} 
If $S = \textnormal{F}_4(q)$ with $q$ even, then the following statements hold.
\begin{itemize}
\item[(i)] $|\pi(S)| =6$ and $8$ for $q=2$ and $4$, respectively, and $|\pi(S)| \geq 11$ otherwise. 
\item[(ii)] If $q \geq 4$, then $\Gamma(S)$ has at least three vertices of degree at least $5$. 
In particular, $\Gamma(S)$ has at least $12$ edges.
\end{itemize} 
\end{lemma}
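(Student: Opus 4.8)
The plan is to start from the standard order formula $|S|=q^{24}(q^2-1)(q^6-1)(q^8-1)(q^{12}-1)$, so that the cyclotomic polynomials dividing $|S|$ are exactly $\Phi_d(q)$ for $d\in\{1,2,3,4,6,8,12\}$. Writing $q=2^f$, the key observation is that $q^{12}-1=2^{12f}-1$ divides $|S|$. From Table~\ref{tab:s(S)=3} I would also record that the three components of $\Gamma(S)$ are $\pi_1(S)$ (the primes of $q\,\Phi_1\Phi_2\Phi_3\Phi_4\Phi_6$), $\pi_2(S)$ (the primes of $\Phi_{12}$), and $\pi_3(S)$ (the primes of $\Phi_8$).

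For part (i), I would first dispose of the small cases $q\in\{2,4,8,16\}$ by direct factorisation, which yields $|\pi(S)|=6,8,11,11$ respectively. For $q=2^f$ with $f\geq 5$ the plan is to apply the Bang--Zsigmondy lemma (Lemma~\ref{lem:zsigmondy}) \emph{in base $2$}: for each divisor $m$ of $12f$ with $m\notin\{1,6\}$ there is a prime $r_m$ with $e(r_m,2)=m$, and $r_m$ divides $2^{12f}-1=q^{12}-1$, hence $|S|$; distinct values of $m$ give distinct odd primes $r_m$. The excluded values $m\in\{1,6\}$ are precisely the base-$2$ exceptions of Lemma~\ref{lem:zsigmondy}, so the number of primes obtained this way is two fewer than the number of positive divisors of $12f$. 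A short divisor count shows that $12f$ has at least twelve divisors whenever $f\geq 5$, so we obtain at least ten odd primes; together with the characteristic prime $2$ this gives $|\pi(S)|\geq 11$.

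For part (ii), it suffices to exhibit three vertices of degree at least $5$, which I would locate inside $\pi_1(S)$; the edge bound then follows because, for any three vertices $a,b,c$ of degree at least $5$, counting the edges incident to $\{a,b,c\}$ gives $|E(\Gamma(S))|\geq \deg a+\deg b+\deg c-|E(\{a,b,c\})|\geq 15-3=12$. As in part (i), Lemma~\ref{lem:zsigmondy} gives $|\pi_1(S)|\geq 6$ for $q\geq 4$ (the prime $2$, primitive prime divisors of each of $\Phi_2,\Phi_3,\Phi_4,\Phi_6$, and a prime divisor of $\Phi_1=q-1$). I would then verify that the characteristic prime $2$ and primitive prime divisors $r_1\mid\Phi_1(q)$ and $r_2\mid\Phi_2(q)$ are each adjacent to at least five other vertices of $\pi_1(S)$. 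Each such adjacency amounts to producing an element of the relevant composite order, equivalently a maximal torus or subsystem subgroup of $\mathrm{F}_4(q)$ whose order is divisible by the corresponding product of cyclotomic factors; for the small indices $d\in\{1,2\}$ (where Euler's totient satisfies $\varphi(d)=1$) there is ample room in the rank-$4$ torus to pair $\Phi_d$ with each of $\Phi_3,\Phi_4,\Phi_6$ and with the characteristic, using the subsystem subgroups of types $A_2\tilde A_2$, $B_3$, and $C_3$.

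The main obstacle is part (ii): Table~\ref{tab:s(S)=3} records only the partition of the vertices of $\Gamma(S)$ into connected components, not the edges \emph{within} $\pi_1(S)$, so the adjacencies underlying the degree bounds cannot be read off from the tables alone. They must instead be extracted from the detailed description of the maximal tori (equivalently, the conjugacy classes of the Weyl group $W(\mathrm{F}_4)$) and of the centralisers of semisimple elements in $\mathrm{F}_4(q)$. This is exactly the information assembled in \cite{VasVdo1,VasVdo2} and collected conveniently in \cite{NMexceptional}, and I would cite those sources to certify the required adjacencies rather than reproduce the torus combinatorics here.
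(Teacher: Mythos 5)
Your proposal is correct and follows essentially the same route as the paper: part (i) is handled by direct factorisation for small $q$ and by Bang--Zsigmondy (Lemma~\ref{lem:zsigmondy}) applied in base $2$ for larger $q$ (the paper uses the fixed index set $\{2,3,4,k,2k,3k,4k,6k\}$ inside $\pi_1(S)$ for $k\geq 7$ rather than your divisor count of $12f$, but this is only a bookkeeping difference), and part (ii) is, as in the paper, ultimately certified by citing the adjacency data of \cite{VasVdo1,VasVdo2} as collected in \cite[Lemma~3.1]{NMexceptional}.
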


\begin{proof}
Assertion~(ii) follows from \cite[Lemma 3.1]{NMexceptional}; see also \cite{VasVdo1,VasVdo2}. 
Assertion~(i) can be verified directly from the order formula $|S| = q^{24}(q^{12}-1)(q^8-1)(q^6-1)(q^2-1)$ for $q = 2^k$ with $k \leq 6$. 
Now suppose that $k \geq 7$. 
By Table~\ref{tab:s(S)=3}, the connected component $\pi_1(S)$ of $\Gamma(S)$ consists of all of the prime divisors of $q(q^4-1)(q^6-1)$. 
Note that $\phi_i := 2^i-1$ divides $q(q^4-1)(q^6-1)$ for all $i \in \{2,3,4,k,2k,4k,3k,6k\}$. 
Because $k \geq 7$, the eight $\phi_i$ are distinct, and Lemma~\ref{lem:zsigmondy} implies that each of them has a primitive prime divisor. 
Together with the prime $2$, this gives us at least nine primes in $\pi_1(S)$. 
Considering also the other two connected components of $\Gamma(S)$ yields $|\pi(S)| \geq 11$. 
\end{proof}

\begin{lemma} \label{lemma:2F4}
If $S ={}^2\textnormal{F}_4(q)$ with $q = 2^{2m+1}$, $m \geq 1$, then $|\pi(S)| \geq 8$ and every coclique in $\Gamma(S)$ has size at most $5$, i.e. $t(S) \leq 5$. 
Moreover, if $m \geq 2$, then $\Gamma(S)$ contains the following graph as a subgraph for some $p_1,\dots,p_6 \in \pi(S) \setminus \{2,3\}$.
\textnormal{
\begin{center}
\begin{tikzpicture}
\graph [no placement, math nodes] { 
2[x=1,y=-0.5]; 
3[x=2,y=0.5]; 
p_1[x=-1,y=-0.5]; 
p_2[x=-1,y=0.5]; 
p_4[x=1,y=0.5] -- {2,3};  
p_3[x=0,y=0.5] -- {p_4,2};  
p_5[x=2,y=-0.5] -- {2,3};
p_6[x=3,y=0.5] -- 3; 
p_4 -- p_5;
2 -- 3
};
\end{tikzpicture}
\end{center}
}
\end{lemma}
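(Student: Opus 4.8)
The plan is to handle the three assertions separately, writing $q = 2^f$ with $f = 2m+1 \geq 3$ throughout and using the order factorisation $|S| = q^{12}(q^6+1)(q^4-1)(q^3+1)(q-1)$. For the bound $|\pi(S)| \geq 8$ I would argue by harvesting primitive prime divisors. By Table~\ref{tab:s(S)=3}, the component $\pi_1(S)$ consists of the prime divisors of $q(q^4-1)(q^3+1) = q\,\Phi_1(q)^2\Phi_2(q)^2\Phi_4(q)\Phi_6(q)$, while $\pi_2(S)\cup\pi_3(S)$ consists of the prime divisors of the remaining factor, which equals $\Phi_{12}(q)=q^4-q^2+1$ (one checks that the two Ree polynomials in the table multiply to $(q^2+q+1)^2 - 2q(q+1)^2 = q^4-q^2+1$). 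Over the base $q \geq 8$, Lemma~\ref{lem:zsigmondy} supplies, for each $d \in \{1,2,6\}$, an odd prime $r_d \in \pi_1$ with $e(r_d,q)=d$; together with the characteristic $2$ and the \emph{two} distinct prime divisors of $\Phi_4(q)=q^2+1$ this already gives six primes in $\pi_1$. The two primes dividing $q^2+1$ are $5$ (since $f$ odd forces $q^2 = 4^f \equiv -1 \pmod 5$) and a primitive prime divisor of $2^{4f}-1$, which is distinct from $5$ because its multiplicative order modulo $2$ is $4f>4$. Since $\pi_2,\pi_3$ are each nonempty, this yields $|\pi(S)|\geq 8$; for $m=1$ the count is sharp, as $|\pi({}^2\textnormal{F}_4(8))|=8$.

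For the coclique bound and the subgraph I would invoke the explicit determination of $\Gamma({}^2\textnormal{F}_4(q))$ of Vasil'ev and Vdovin, collected in \cite{NMexceptional} (see also \cite{VasVdo1,VasVdo2}). Because the three components contribute independently to any coclique, $t(S)$ is the sum of the independence numbers of the induced subgraphs on $\pi_1,\pi_2,\pi_3$. The components $\pi_2,\pi_3$ are cliques (the corresponding Ree tori are cyclic), so each contributes $1$, and it remains to bound the independence number on $\pi_1$ by $3$: every odd prime of $\pi_1$ has $e(\cdot,q)\in\{1,2,4,6\}$, primes sharing an $e$-value are mutually adjacent (they lie in a common cyclic torus), and the adjacency criterion of \cite{VasVdo1,VasVdo2} shows no coclique in $\pi_1$ exceeds size $3$, giving $t(S)\leq 5$. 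For the displayed subgraph I would take $p_1\in\pi_2$ and $p_2\in\pi_3$, which are isolated from the other six vertices because they lie in distinct components, and choose $p_3,p_4,p_5,p_6$ to be primitive prime divisors realising appropriate $e$-values in $\{1,2,4,6\}$, with $3$ itself an $e=2$ prime of small type (note $3\mid q+1$). Here the hypothesis $m\geq 2$ is exactly what is needed: it guarantees that $\Phi_2(q)=q+1$ has a primitive prime divisor distinct from $3$ (Zsigmondy fails only when $2f=6$, i.e.\ $m=1$, where indeed $\Phi_2(8)=9$ is a prime power), providing the prime $p_6$. Each displayed edge is then certified by exhibiting a maximal torus (or a parabolic, for the edges at $2$) whose order is divisible by the relevant product, and each required non-edge is read off from the same adjacency criterion.

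The Zsigmondy count in the first paragraph is routine. The genuine difficulty is concentrated in the second paragraph, and specifically in the \emph{non}-edges of the subgraph and the bound $t(S)\leq 5$. Producing an edge only requires exhibiting a single torus or parabolic realising an element of the appropriate order, but ruling out an element of order $rs$ for an absent edge, and bounding the independence number, both require the \emph{complete} prime graph of ${}^2\textnormal{F}_4(q)$; this is precisely the content I would import from \cite{VasVdo1,VasVdo2,NMexceptional} rather than re-derive, since reproving it amounts to reconstructing the full adjacency criterion for this twisted exceptional family. A subtle point to treat carefully is the role of the small prime $3$: although $3$ shares its $e$-value with $p_6$, the edge $\{2,3\}$ is present while $\{2,p_6\}$ is absent, so the adjacency of the characteristic with an $e=2$ prime cannot be decided from the $e$-value alone, and one must use the finer small-prime information in the cited adjacency criterion.
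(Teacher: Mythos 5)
Your overall strategy is the same as the paper's: both arguments import the complete adjacency structure (the ``compact form'') of $\Gamma({}^2\text{F}_4(q))$ from Vasil'ev--Vdovin via \cite[Lemma~3.3]{NMexceptional}, read off $t(S) \leq 5$ from it, obtain the vertex count from primitive prime divisors, and use the fact that $2^{2m+1}+1$ is a power of $3$ only for $m=1$ to explain the hypothesis $m \geq 2$. Your count $|\pi(S)| \geq 8$ is if anything more uniform than the paper's, which handles $m=1$ by drawing $\Gamma({}^2\text{F}_4(8))$ explicitly; the slip $\Phi_1(q)^2$ in place of $\Phi_1(q)$ is harmless, and your verification that the two Ree factors multiply to $q^4-q^2+1$ is correct.

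There is, however, a concrete error in your assignment of primes to the vertices of the displayed graph. In that graph $p_5$ is adjacent to $2$, $3$ and $p_4$, while $p_6$ is adjacent to $3$ only. The primes dividing $q+1$ other than $3$ --- exactly the ones whose existence requires $m \geq 2$ --- are adjacent in $\Gamma(S)$ to $2$, to $3$ and to the primes dividing $q-1$, so they must play the role of $p_5$, not $p_6$ as you claim. The vertex $p_6$ must instead be realised by a prime divisor of $q^2-q+1$ other than $3$ (a primitive prime divisor of $2^{6(2m+1)}-1$), which exists for every $m \geq 1$ and is adjacent only to $3$. As written, your assignment forces the $e=6$ prime into one of the slots $p_3,p_4,p_5$, each of which is required to be adjacent to $2$; that adjacency fails for primes dividing $q^2-q+1$, so the displayed graph would not be exhibited as a subgraph. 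Relatedly, the ``subtle point'' in your final paragraph rests on the assertion that $\{2,r\}$ is a non-edge for $r \mid q+1$, $r \neq 3$; this is false (such $r$ are adjacent to $2$ per the compact form), and the genuine reason $3$ needs special treatment is that it divides both $q+1$ and $q^2-q+1$ and so must be excluded from both of those vertex classes. Once $p_5$ and $p_6$ are interchanged, your argument goes through and coincides with the paper's.
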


\begin{proof}
This follows from \cite[Lemma 3.3]{NMexceptional}, but we clarify some details; see also \cite{VasVdo1,VasVdo2}. 
The graph pictured above is the so-called ``compact form'' of $\Gamma(S)$. 
This means that each vertex other than $2$ and $3$ in the above graph represents a certain {\em set} of prime divisors of $|S|$ that happen to form a clique in $\Gamma(S)$, and an edge indicates that {\em all} edges from one clique to another are present in $\Gamma(S)$. 
In particular, every coclique in $\Gamma(S)$ has size at most $5$. 
Specifically, $p_1,\dots,p_6$ represent the following sets of prime divisors of $|S|$:
\begin{itemize}
\item $p_1$ and $p_2$ represent the sets of prime divisors of $q^2 \pm \sqrt{2q^3} + q \pm \sqrt{2q} + 1$; 
\item $p_3$ represents the set of prime divisors of $q^2+1$; 
\item $p_4$ represents the set of prime divisors of $q-1$; 
\item $p_5$ represents the set of prime divisors of $q+1$ other than $3$; and 
\item $p_6$ represents the set of prime divisors of $q^2-q+1$ other than $3$. 
\end{itemize}
Note that these sets are disjoint, and that the sets represented by $p_1$, $p_2$, $p_3$, and $p_4$ are non-empty for $q = 2^{2m+1}$, $m \geq 1$. 
The set represented by $p_6$ is also non-empty. 
Indeed, $q^6-1 = (q^3-1)(q-1)(q^2-q+1)$, so $q^2-q+1$ is divisible by a primitive prime divisor of $q^6-1 = 2^{6(m+1)}-1$, which exists by Lemma~\ref{lem:zsigmondy} and is not equal to $3$ because $3$ is a primitive prime divisor of $2^2-1$. 
The set represented by $p_5$ is non-empty if and only if $m \geq 2$. 
Indeed, this set is empty if and only if $2^{2m+1}+1 = 3^k$ for some $k$, and the main result of \cite{gerono} shows that this occurs if and only if $k=2$ and $m=1$. 
Therefore, if $m \geq 2$, then every `vertex' in the compact form of $\Gamma(S)$ is non-empty, so $\Gamma(S)$ contains a subgraph of the indicated form; in particular, $|\pi(S)| \geq 8$. 
If $m=1$, then $\Gamma(S)$ is the graph
\begin{center}
\begin{tikzpicture}
\graph [no placement] { 
2[x=1,y=-0.5]; 
13[x=2,y=0.5]; 
37[x=-1,y=-0.5]; 
109[x=-1,y=0.5]; 
19[x=0,y=-0.5]; 
7[x=1,y=0.5] -- {2,13};  
3[x=0,y=0.5] -- {7,2};  
5[x=2,y=-0.5] -- {2,13};
7 -- 5;
2 -- 3; 
2 --13; 
3 -- 19
};
\end{tikzpicture}
\end{center}
which has $8$ vertices and no coclique of size larger than $4$. 
\end{proof}

The following two lemmas concern the Suzuki groups ${}^2\text{B}_2(2^{2m+1})$, $m \geq 1$.

\begin{lemma} \label{lemma:suz_primes}
If $q = 2^{2m+1}$, $m \geq 1$, then both of $Q_\pm := q \pm \sqrt{2q} + 1$ are prime powers if and only if $m \in \{1,2\}$.
\end{lemma}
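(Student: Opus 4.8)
The plan is to reduce the question to a single exponential Diophantine equation and settle it by pinning down a $5$-adic valuation. Since $q=2^{2m+1}$ we have $\sqrt{2q}=2^{m+1}$, so that $Q_\pm = 2^{2m+1}\pm 2^{m+1}+1$ and, crucially,
\[
Q_+Q_- = (q+\sqrt{2q}+1)(q-\sqrt{2q}+1)=(q+1)^2-2q=q^2+1=4^{2m+1}+1.
\]
I would first dispose of the forward implication by direct evaluation: $m=1$ gives $(Q_-,Q_+)=(5,13)$ and $m=2$ gives $(Q_-,Q_+)=(25,41)$, all of which are prime powers. The substance of the lemma is the converse, and the strategy there is to show that whenever $m\ge 3$ at least one of $Q_\pm$ fails to be a prime power.

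The two structural observations driving the converse are as follows. First, $Q_+$ and $Q_-$ are coprime: both are odd and $Q_+-Q_-=2^{m+2}$, so any common divisor is an odd divisor of a power of $2$, hence $1$. Second, $4^{2m+1}\equiv(-1)^{2m+1}=-1\pmod 5$, so $5\mid Q_+Q_-$; combined with coprimality this forces $5$ to divide exactly one of $Q_+,Q_-$, say $Q$. Consequently, if both $Q_\pm$ were prime powers, then $Q$ would be a prime power divisible by $5$, hence a power of $5$, say $Q=5^k$.

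To obtain a contradiction for $m\ge 3$ I would compute $v_5(Q)$ exactly. Since $Q$ carries the entire $5$-part of $Q_+Q_-=4^{2m+1}+1$, the lifting-the-exponent lemma (applied to $4^{2m+1}+1^{2m+1}$ with the odd prime $5\mid 4+1$, the exponent $2m+1$ being odd) gives
\[
v_5(Q)=v_5(4^{2m+1}+1)=v_5(4+1)+v_5(2m+1)=1+v_5(2m+1).
\]
Because $5^{v_5(2m+1)}\mid 2m+1$, we have $5^{v_5(2m+1)}\le 2m+1$, and therefore $Q=5^{1+v_5(2m+1)}\le 5(2m+1)$. On the other hand $Q\ge Q_-=2^{m+1}(2^m-1)+1>2^{2m}=4^m$ for every $m\ge 1$. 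Hence $4^m<5(2m+1)$, which is false for all $m\ge 3$ (already $4^3=64>35$, and the left side grows geometrically while the right side grows linearly). This contradiction shows that for $m\ge 3$ the factor $Q$ is not a power of $5$, so $Q_+$ and $Q_-$ cannot both be prime powers, which completes the proof.

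The one delicate point is securing a sufficiently sharp \emph{upper} bound on $v_5(Q)$; everything else is elementary size estimation. I expect lifting the exponent to be the cleanest tool, but the same bound can be derived without it by observing that $4^{2m+1}\equiv -1\pmod{5^k}$ forces the multiplicative order $2\cdot 5^{k-1}$ of $4$ modulo $5^k$ (using that $2$ is a primitive root mod $5^k$) to divide $2(2m+1)$, whence $5^{k-1}\mid 2m+1$ and again $5^k\le 5(2m+1)$. Either way, the argument turns on contrasting the at-most-logarithmic growth of $v_5(Q)$ in $m$ with the linear growth of $\log_5 Q$, which is exactly what pins $m$ down to $\{1,2\}$.
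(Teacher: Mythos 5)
Your proof is correct, and it takes a genuinely different route from the paper's. Both arguments begin identically (coprimality of $Q_\pm$, the observation that $5$ divides exactly one of them, hence that factor must be a power of $5$), but they diverge at the key valuation step. The paper works $2$-adically: writing the $5$-power as $5^\ell$ and using lifting-the-exponent at $p=2$ on $5^\ell-1=2^{m+1}(2^m\pm 1)$, it forces $v_2(\ell)=m-1$, hence $\ell\geq 2^{m-1}$, so $5^\ell$ is doubly exponentially large in $m$ and eventually exceeds the trivial upper bound $Q_\pm<2^{2m+2}$. You instead work $5$-adically: lifting the exponent on $4^{2m+1}+1$ gives $v_5(Q)=1+v_5(2m+1)$, so the candidate power of $5$ is at most $5(2m+1)$, a merely linear upper bound, which is crushed by the exponential lower bound $Q\geq Q_->4^m$ once $m\geq 3$. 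Your version is arguably the more economical of the two: the contradiction $4^m<5(2m+1)$ is immediate, whereas the paper must first extract the exact $2$-adic valuation of $\ell$ and then compare logarithms. Your remark that the upper bound on $v_5(Q)$ can be recovered without LTE, via the multiplicative order of $4$ modulo $5^k$, is also sound and makes the argument self-contained if desired. All the computational details check out, including the base cases $(Q_-,Q_+)=(5,13)$ for $m=1$ and $(25,41)$ for $m=2$.
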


\begin{proof}
Note that $Q_\pm$ are coprime. 
Because $Q_+Q_- = q^2+1 \equiv 0 \pmod 5$, one of $Q_\pm$ is divisible by $5$. 
If $Q_\pm$ are both prime powers, then one of $Q_\pm = 5^\ell$ holds for some~$\ell$. 
Therefore, one of $5^\ell -1 = 2^{m+1}(2^m \pm 1)$ holds. 
Write $\ell = 2^kr$, where $k \geq 0$ and $r$ is odd. 
By the well-known ``lifting the exponent lemma'', see e.g. \cite[Lemma~A.4]{BGbook}, we then have $5^\ell-1 = 2^{k+2}r'$ for some odd $r'$. 
Hence, $\log_2(5^\ell) = 2^{m-1}r\log_2(5) > 2^m$. 
On the other hand, $Q_- < Q_+ < 2^{2m+2}$ for all $m \geq 1$, so $\log_2(5^\ell) = \log_2(Q_\pm) < 2m+2$. 
In particular, $2^m < 2m+2$, so $m \leq 2$. 
Conversely, $Q_\pm$ are both prime powers if $m \in \{1,2\}$. 
\end{proof}

\begin{lemma} \label{lemma:suz}
If $S = {}^2\textnormal{B}_2(2^{2m+1})$, $m \geq 1$, then $\Gamma(S)$ consists of four cliques (one of which is the isolated vertex $2$). 
In particular, the maximal size of a coclique in $\Gamma(S)$ is $4$, i.e. $t(S)=4$.  
Moreover, $\Gamma(S)$ consists of four isolated vertices if and only if $m \in \{1,2\}$. 
\end{lemma}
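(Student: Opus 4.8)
The plan is to combine the component structure from Theorem~\ref{thm:tables} with the well-known subgroup structure of the Suzuki groups, and then to deduce the final ``moreover'' clause directly from Lemma~\ref{lemma:suz_primes}. Writing $q = 2^{2m+1}$ and noting that $\sqrt{2q} = 2^{m+1}$ is an integer, Table~\ref{tab:s(S)=4} tells us that $\Gamma(S)$ has exactly four connected components: the isolated vertex $\{2\}$ together with the sets of prime divisors of $q-1$, of $Q_- = q - 2^{m+1} + 1$, and of $Q_+ = q + 2^{m+1} + 1$. Since $q \geq 8$ we have $q-1 \geq 7$, $Q_- \geq 5$, and $Q_+ > Q_- > 1$, so all four components are non-empty.

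First I would show that each component is a clique. The component $\{2\}$ is trivially a clique. For the remaining three, I would appeal to the classical description of ${}^2\text{B}_2(q)$ due to Suzuki: every element of $S$ has order dividing one of $4$, $q-1$, $Q_-$, or $Q_+$, and $S$ contains cyclic (maximal torus) subgroups of orders $q-1$, $Q_-$, and $Q_+$. A cyclic group of order $n$ contains an element of order $r_1 r_2$ for any two primes $r_1, r_2$ dividing $n$, so the prime divisors of each of $q-1$, $Q_-$, $Q_+$ form a clique in $\Gamma(S)$. Hence $\Gamma(S)$ consists of four cliques. Because these cliques are precisely the (non-empty) connected components of $\Gamma(S)$, a maximal coclique contains exactly one vertex from each, giving $t(S) = 4$.

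It remains to prove the ``moreover'' statement. The graph $\Gamma(S)$ consists of four isolated vertices precisely when each of its four cliques is a single vertex, i.e. when each of $q-1$, $Q_-$, and $Q_+$ is a prime power (the component $\{2\}$ always being a single vertex). If $m \in \{1,2\}$, then Lemma~\ref{lemma:suz_primes} shows that $Q_-$ and $Q_+$ are both prime powers, while $q-1$ equals $7$ or $31$, which are prime; hence all four components are single vertices. Conversely, if $\Gamma(S)$ consists of four isolated vertices, then $Q_-$ and $Q_+$ are in particular both prime powers, so Lemma~\ref{lemma:suz_primes} forces $m \in \{1,2\}$.

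The main obstacle, such as it is, lies in the first step: pinning down with an appropriate citation the facts that the spectrum of $S$ is exhausted by divisors of $4$, $q-1$, $Q_-$, and $Q_+$, and that $S$ realises cyclic subgroups of the latter three orders. These are standard consequences of Suzuki's analysis of the maximal subgroups and conjugacy classes of ${}^2\text{B}_2(q)$, so once the right reference is in hand, everything else reduces to the elementary observations above.
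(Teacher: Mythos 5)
Your proof is correct and follows essentially the same route as the paper: the component structure comes from Theorem~\ref{thm:tables}, the cliques come from the cyclic maximal tori of orders $q-1$ and $q\pm\sqrt{2q}+1$ (the paper cites \cite[Table~8.16]{BHR-D} for these, which is the reference you were looking for), and the final assertion reduces to Lemma~\ref{lemma:suz_primes}. Your explicit check that $q-1$ is prime for $m\in\{1,2\}$ is a small point the paper leaves implicit, and is a welcome addition.
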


\begin{proof}
Note that Theorem~\ref{thm:tables} already asserts that $\Gamma(S)$ has four connected components; see Table~\ref{tab:s(S)=4}.
The order of $S$ is $q^2(q-1)(q+\sqrt{2q}+1)(q-\sqrt{2q}+1)$, where $q=2^{2m+1}$. 
These four factors of $|S|$, call them $k_i$ for $i \in \{1,2,3,4\}$, are pairwise coprime, so prime divisors of distinct $k_i$ are non-adjacent in $\Gamma(S)$. 
On the other hand, by \cite[Table~8.16]{BHR-D}, $S$ contains a cyclic subgroup (indeed, a maximal torus) of order $k_i$ for each $i \in \{2,3,4\}$, so prime divisors of a given $k_i$ are adjacent. 
The final assertion follows from Lemma~\ref{lemma:suz_primes}. 
\end{proof}

The following lemma is well known, but we include a proof for convenience; it is used to prove the subsequent lemma about the Suzuki groups ${}^2\text{B}_2(2^{2m+1})$.

\begin{lemma} \label{lemma:brauer}
Let $G$ be a finite group, $F$ a field of characteristic $p>0$, and $V$ an irreducible $FG$-module with corresponding Brauer character $\chi$. 
If $g\in G$ has prime order not equal to $p$, then the fixed-point space of $g$ on $V$ has dimension $\frac{1}{|g|} \sum _{x \in \langle g \rangle} \chi(x)$. 
\end{lemma}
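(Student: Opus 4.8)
The plan is to reduce the claim to a direct eigenvalue computation, exploiting that $g$ is a $p$-regular element and hence acts semisimply on $V$. Write $\ell = |g|$, which is a prime different from $p$; in particular $\gcd(\ell,p)=1$, so $X^\ell - 1$ is separable over $F$ and $g$ acts diagonalisably on $V \otimes_F \overline{F}$. Let $\mu_1,\dots,\mu_d$ (with $d = \dim V$) be the eigenvalues of $g$ on $V \otimes_F \overline{F}$, listed with multiplicity; each $\mu_i$ is an $\ell$-th root of unity in $\overline{F}^\times$. Because $g$ is semisimple, the fixed-point space $V^g$ is precisely the eigenspace for the eigenvalue $1$, so $\dim V^g$ equals the number of indices $i$ with $\mu_i = 1$.

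Next I would invoke the definition of the Brauer character. Fix the isomorphism $\mu \mapsto \hat{\mu}$ from the group of $\ell$-th roots of unity in $\overline{F}^\times$ onto the group of $\ell$-th roots of unity in $\mathbb{C}^\times$ that is used to define $\chi$ on $p$-regular elements. For each $j$, the element $g^j$ is again $p$-regular with eigenvalues $\mu_1^j,\dots,\mu_d^j$, so $\chi(g^j) = \sum_{i=1}^d \widehat{\mu_i^{\,j}}$. The point to use carefully is that $\mu \mapsto \hat{\mu}$ is a group homomorphism, whence $\widehat{\mu_i^{\,j}} = \hat{\mu}_i^{\,j}$ and, in particular, $\hat{\mu}_i = 1$ if and only if $\mu_i = 1$.

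Summing over the cyclic group $\langle g \rangle = \{1,g,\dots,g^{\ell-1}\}$ and interchanging the order of summation gives
\[
\sum_{x \in \langle g \rangle} \chi(x) = \sum_{j=0}^{\ell-1} \sum_{i=1}^d \hat{\mu}_i^{\,j} = \sum_{i=1}^d \sum_{j=0}^{\ell-1} \hat{\mu}_i^{\,j}.
\]
For each $i$, the inner geometric sum equals $\ell$ if $\hat{\mu}_i = 1$ and $0$ otherwise. Hence the right-hand side equals $\ell$ times the number of $i$ with $\mu_i = 1$, which is $\ell \cdot \dim V^g = |g| \cdot \dim V^g$. Dividing by $|g|$ yields the claimed formula.

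The computation itself is routine; the only steps requiring genuine care are the two structural inputs flagged above — the semisimplicity of $g$, which guarantees that $\dim V^g$ is the full algebraic multiplicity of the eigenvalue $1$ rather than merely a lower bound, and the multiplicativity of the root-of-unity lift, which is what allows the Brauer character values at the various powers $g^j$ to be assembled into a single geometric series. Equivalently, one may phrase the whole argument as the statement that $\frac{1}{|g|}\sum_{x \in \langle g\rangle}\chi(x)$ is the multiplicity of the trivial module in the restriction of $V$ to the $p'$-group $\langle g\rangle$, which is semisimple by Maschke's theorem, and this multiplicity is exactly $\dim V^g$.
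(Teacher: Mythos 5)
Your proof is correct and is essentially the paper's argument: the paper restricts $V$ to the $p'$-group $\langle g\rangle$ and reads off $\dim V^g$ as the multiplicity $[\chi_{\langle g\rangle},1_{\langle g\rangle}]$ of the trivial character, which is exactly the formulation you give in your closing paragraph. Your main body simply unwinds that inner-product computation into an explicit eigenvalue/geometric-series calculation, proving the needed semisimplicity and orthogonality facts for the cyclic $p'$-group inline rather than citing them.
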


\begin{proof}
Consider the restriction of $V$ to $H=\langle g \rangle$, with its corresponding Brauer character $\chi_H$. 
Because $|H|$ is coprime to $p$, the dimension of the fixed-point space of $g$ is precisely the multiplicity of the trivial character $1_H$ as a constituent of $\chi_H$. 
This is equal to the inner product of $\chi_H$ and $1_H$, namely  
$[\chi_H, 1_H] = \frac{1}{|H|} \sum_{x\in H}  \chi_H(x) 1_H(x) = \frac{1}{|g|} \sum_{x\in \langle g \rangle}  \chi(x)$.
\end{proof}

\begin{lemma} \label{lemma:suz_reps}
If $S={}^2\textnormal{B}_2(2^{2m+1})$, $m\geq 1$, and $V$ is an irreducible module for $S$ over a finite field $\mathbb{F}_r$ with $r$ coprime to $|S|$, then for every prime $p$ dividing $|S|$ there exists an element of order $p$ in $S$ that fixes a non-zero vector in $V$.
\end{lemma}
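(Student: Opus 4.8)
The plan is to use the coprimality of $r$ and $|S|$ to reduce to ordinary characters, and then apply Lemma~\ref{lemma:brauer}. Since $r \nmid |S|$, every element of $S$ has order coprime to $r$, so the Brauer character $\chi$ of $V$ is defined on all of $S$ and is the ordinary character of a characteristic-zero lift of $V$. Extending scalars, $V \otimes \overline{\mathbb{F}_r}$ is a single Galois orbit of absolutely irreducible modules $V_0^\sigma$, each occurring once, and for any $g$ the multiplicity of the eigenvalue $1$ is unchanged under $\sigma$ (a field automorphism fixes only the root of unity $1$ among all roots of unity). Hence $\dim V^g$ equals the Galois-orbit size times $\dim V_0^g$, and it suffices to prove the claim for an absolutely irreducible ordinary module $V_0$ with character $\chi_0$. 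By Lemma~\ref{lemma:brauer}, the goal becomes: for each prime $p \mid |S|$ there is an element $g$ of order $p$ with
\[
\dim V_0^g \;=\; \frac{1}{p}\sum_{x \in \langle g\rangle}\chi_0(x) \;=\; \big[\chi_0|_{\langle g\rangle},\,1_{\langle g\rangle}\big] \;\geq\; 1.
\]
If $V_0$ is trivial there is nothing to prove, so we assume $\chi_0$ is nontrivial; as $S$ is simple and non-abelian, $V_0$ is then faithful.

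For $p = 2$, the group $S$ has a single class of involutions, and for such $g$ we have $\dim V_0^g = \tfrac{1}{2}(\chi_0(1) + \chi_0(g))$, which vanishes only if $\chi_0(g) = -\chi_0(1)$, i.e. only if $g$ acts as $-\mathrm{Id}$ on $V_0$. A nonidentity element acting as a scalar would be central in the faithful image of $S$, contradicting simplicity; hence $\dim V_0^g \geq 1$.

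Now let $p$ be odd. By Lemma~\ref{lemma:suz} (and its proof), $|S| = q^2(q-1)(q+\sqrt{2q}+1)(q-\sqrt{2q}+1)$ with the three odd factors pairwise coprime, so $p$ divides exactly one of them and the corresponding element of order $p$ lies in the cyclic maximal torus $T$ of that order. Restricting to the abelian group $T$, whose order is coprime to $r$, the module $V_0|_T$ is a sum of linear characters $\lambda_j$ of the cyclic group $T$, and for $g$ of order $p$ with $C_p = \langle g\rangle$ we have $\dim V_0^g = \#\{j : \lambda_j|_{C_p} = 1\}$. Thus it suffices to exhibit one constituent $\lambda_j$ trivial on $C_p$. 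The first case to dispatch is when the trivial character $1_T$ already occurs in $V_0|_T$, equivalently (by Frobenius reciprocity) when $\chi_0$ is a constituent of the permutation character $\mathrm{Ind}_T^S 1_T$: then all of $T$, and in particular every element of order $p$ in $T$, fixes a nonzero vector, settling every prime dividing $|T|$ simultaneously.

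The remaining, and principal, difficulty is the characters $\chi_0$ with $V_0^T = 0$. Here one must show that although no constituent of $V_0|_T$ is trivial, writing $\lambda_j = \lambda^{a_j}$ for a fixed faithful $\lambda \in \widehat{T}$ (so every $a_j \not\equiv 0 \bmod |T|$), at least one $a_j$ is divisible by $p$. The plan is to read the multiset $\{a_j\}$ off Suzuki's explicit character table of ${}^2\textnormal{B}_2(q)$; this multiset is invariant under the action of $W = N_S(T)/T$, which is cyclic of order $2$ for $T$ of order $q-1$ and of order $4$ for the other two tori. One then verifies the claim for each of the finitely many families of irreducible characters --- the principal series of degree $q^2+1$, the Steinberg character of degree $q^2$, the two cuspidal characters of degree $\tfrac{1}{2}\sqrt{2q}\,(q-1)$, and the discrete-series characters associated with the tori of orders $q\pm\sqrt{2q}+1$ --- by computing $\tfrac{1}{p}\sum_{x\in\langle g\rangle}\chi_0(x)$ directly from the tabulated values and checking positivity. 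The cuspidal and discrete-series characters, whose torus restrictions are the most constrained, are where the genuine work lies; since there are only a bounded number of families and $|W| \in \{2,4\}$, this is a finite if tedious verification.
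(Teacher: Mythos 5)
Your proposal is correct and follows essentially the same route as the paper: since $r$ is coprime to $|S|$, the Brauer characters coincide with the ordinary characters of $S$, so Lemma~\ref{lemma:brauer} reduces everything to a finite check against Suzuki's character table, which is exactly what the paper does (and likewise leaves as ``a routine calculation''). Your additional scaffolding --- the Galois-orbit reduction to absolutely irreducible constituents, the scalar-action argument for $p=2$, and the torus-restriction framing for odd $p$ --- is sound and in fact supplies details the paper omits, though the decisive computation for the cuspidal and discrete-series characters is still deferred in both treatments.
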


\begin{proof}
We apply Lemma~\ref{lemma:brauer} with $G=S$ and $F = \mathbb{F}_r$. 
Because $r$ is coprime to $|S|$, the Brauer character table of $S$ in characteristic $r$ is identical to the ordinary character table of $S$. 
The ordinary character table of $S$ was computed by Suzuki \cite{Suzuki}, and is conveniently reproduced by Martineau~\cite{MartineauOdd}. 
A routine calculation establishes the result. 
\end{proof}

The final two preliminary lemmas concern the small Ree groups ${}^2\text{G}_2(3^{2m+1})$, $m \geq 1$. 

\begin{lemma}\label{lemma:2G2}
Let $q=3^{2m+1}$, $m\geq 1$. 
If $Q_\pm := q\pm \sqrt{3q}+1$ is a prime power, then it is a prime. 
Moreover, if both of $Q_\pm$ are prime powers (hence primes), then $m \equiv 1 \pmod{3}$.
\end{lemma}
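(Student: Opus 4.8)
The plan is to work directly with the integers $Q_\pm = q \pm \sqrt{3q} + 1$. Since $q = 3^{2m+1}$ we have $\sqrt{3q} = 3^{m+1}$, so $Q_\pm = 3^{2m+1} \pm 3^{m+1} + 1$. Both of these are odd and congruent to $1$ modulo $3$, so if $Q_\pm = p^a$ is a prime power then $p \geq 5$. I would also record the elementary size bounds $3^{2m+1} < Q_\pm < 2 \cdot 3^{2m+1}$, valid for $m \geq 1$, which in particular force $p < 3^{m+1}$ whenever $a \geq 2$. The key arithmetic input is the factorisation $Q_\pm - 1 = 3^{m+1}(3^m \pm 1)$: since $3 \nmid 3^m \pm 1$, the $3$-adic valuation $v_3(Q_\pm - 1)$ is exactly $m+1$.

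To prove the first assertion, suppose for contradiction that $Q_\pm = p^a$ with $a \geq 2$. Let $t \in \{1,2\}$ be the multiplicative order of $p$ modulo $3$; then $t \mid a$ because $3 \mid p^a - 1$. Applying the lifting-the-exponent lemma (as in the proof of Lemma~\ref{lemma:suz_primes}; see \cite[Lemma~A.4]{BGbook}) to $p^a - 1 = (p^t)^{a/t} - 1$ yields $v_3(p^t - 1) = m + 1 - j$, where $j = v_3(a)$. If $j = 0$, then $3^{m+1} \mid p^t - 1$, which forces $p \equiv 1 \pmod{3^{m+1}}$, hence $p > 3^{m+1}$, when $t = 1$ (contradicting $p < 3^{m+1}$), and $p \geq 3^{m+1} - 1$ when $t = 2$ (whence $p^a \geq p^2 > 2 \cdot 3^{2m+1} > Q_\pm$, again a contradiction). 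If $j \geq 1$, then $3 \mid a$, and from $v_3(p^t - 1) = m+1-j \geq 1$ I obtain both $p > 3^{(m+1-j)/t}$ and the divisibility bound $a \geq t \cdot 3^j$; substituting these into $p^a < 2 \cdot 3^{2m+1} < 3^{2m+2}$, the factor $t$ cancels and the problem reduces to the single inequality $3^j(m+1-j) \geq 2(m+1)$ for $1 \leq j \leq m$. This expression is increasing in $j$ on the relevant range, so its only binding instance is $j = 1$, where it reads $3m \geq 2m + 2$ and holds precisely for $m \geq 2$. Thus $a \geq 2$ is impossible once $m \geq 2$, and the single remaining case $m = 1$ is checked by hand: $Q_+ = 37$ and $Q_- = 19$ are both prime.

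For the second assertion I would use a congruence modulo $7$. First note that $Q_+ Q_- = q^2 - q + 1$ and that $Q_+$ and $Q_-$ are coprime, since any common divisor divides $Q_+ - Q_- = 2 \cdot 3^{m+1}$ while each $Q_\pm$ is coprime to $6$. Because $3$ has multiplicative order $6$ modulo $7$, a short computation of $q^2 - q + 1 \pmod 7$ in terms of $(2m+1) \bmod 6$ shows that $7 \mid Q_+ Q_-$ if and only if $m \not\equiv 1 \pmod 3$. Now suppose both $Q_\pm$ are prime powers; by the first assertion they are both prime. If $m \not\equiv 1 \pmod 3$, then $7$ divides exactly one of the coprime primes $Q_\pm$, forcing it to equal $7$; but $Q_- = q - 3^{m+1} + 1 > 7$ for all $m \geq 1$, a contradiction. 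Hence $m \equiv 1 \pmod 3$.

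The main obstacle is the bookkeeping in the first assertion. In the case $j \geq 1$ one must balance the lower bound on $p$ coming from lifting the exponent against the divisibility bound $a \geq t \cdot 3^j$ and the upper size bound on $Q_\pm$ simultaneously, observing that the order $t$ cancels to leave the clean inequality $3^j(m+1-j) \geq 2(m+1)$; and in the case $j = 0$ one must treat $p \equiv 2 \pmod 3$ separately, since there the valuation lands on $p+1$ and yields only $p \geq 3^{m+1} - 1$. One should also remember to dispatch the small case $m = 1$ by hand, as the asymptotic estimates only become effective for $m \geq 2$.
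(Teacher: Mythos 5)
Your proof is correct and follows essentially the same route as the paper: the lifting‑the‑exponent lemma at the prime $3$ applied to $Q_\pm - 1 = 3^{m+1}(3^m \pm 1)$, balanced against the size bound $Q_\pm < 2\cdot 3^{2m+1}$, forces the exponent to be $1$, and the second assertion follows from the same mod‑$7$ analysis of $Q_+Q_- = q^2 - q + 1$. One small slip worth fixing: the recorded lower bound $3^{2m+1} < Q_-$ is false (e.g. $Q_- = 19 < 27$ for $m=1$), but this is harmless since only the upper bound is ever used in your argument.
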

\begin{proof}
The proof is adapted from \cite{StackExchange}. 
Suppose that one of $Q_{\pm}  = p^k$ for some prime $p$ and some $k \geq 1$. 
Note that $p \neq 3$, and that
\[
3^{m+1}(3^m\pm 1) = p^k-1, \quad \text{so in particular} \quad 3^{m+1} > p^{k/2}.
\]
If $k$ is even, then $3^{m+1}$ divides one of $p^{k/2} \pm 1$. 
This means that either $p^k-1 > 3^{2m+2}$, contradicting the above inequality, or $3^{m+1} = p^{k/2}+1$, implying that $3^m\pm 1 = 3^{m+1}-2$, which is impossible. 
Therefore, $k$ is odd.
If $p \equiv 2 \pmod 3$, then $p^k-1\equiv 1 \pmod 3$, a contradiction. 
Hence, $p-1 = 2s\cdot 3^t$ for some $t\geq 1$ and some $s$ coprime to 3. 
The ``lifting the exponent lemma" \cite[Lemma~A.4]{BGbook} therefore implies that $k  = 3^ab$, where $a = m+1-t$ and $b$ is coprime to 3.  Because $3^{m+1}> p^{k/2}$ (as above), we have
\begin{equation} \label{2G2lemmaEq2}
a+t = m+1 > \log_3(p^{k/2}) = \frac{3^ab}{2}\log_3(2s\cdot 3^t) > \frac{3^abt}{2}.
\end{equation}
Because $t \geq 1$, this implies that $\frac{a}{t} > \frac{3^ab}{2}-1$, and hence that $a > \frac{3^a}{2}-1$. 
Therefore, $a \leq 1$. 
If $a=1$, then $1+t > \frac{3^bt}{2}$, so $b=t=1$ because $b \neq 0$.
The inequality $a+t > \frac{3^ab}{2}\log_3(2s\cdot 3^t)$ from \eqref{2G2lemmaEq2} then implies that $\frac{4}{3} > \log_3(6s) \geq \log_3(6) \approx 1.63$, a contradiction. 
Therefore, $a=0$, so \eqref{2G2lemmaEq2} implies that $t > \frac{bt}{2}$. 
Hence, $b=1$ and $k = 3^ab = 1$, so $Q_\pm = p^k$ is a prime. 

It remains to prove the final assertion. 
Recall that $q = 3^{2m+1}$. 
One can check that $Q_+Q_- = q^2-q+1$ is divisible by $7$ if and only if $m \not \equiv 1 \pmod 3$. 
Therefore, if $Q_\pm$ are both prime powers (hence primes), then either $m \equiv 1 \pmod 3$, or $m \not \equiv 1 \pmod 3$ and $Q_- = 7$. 
The latter case implies that $m=0$, contradicting our assumption that $m \geq 1$, so the former case holds as asserted. 
\end{proof}

\begin{lemma} \label{lemma:2G2_2}
If $S = {}^2\textnormal{G}_2(3^{2m+1})$, $m \geq 1$, then every coclique in $\Gamma(S)$ has size at most~$5$, i.e. $t(S) \leq 5$. 
Moreover, if $m \geq 2$ and $\Gamma(S)$ contains two isolated vertices, then $|\pi(S)| \geq 9$.
\end{lemma}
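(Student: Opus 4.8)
The plan is to read the three connected components of $\Gamma(S)$ off Table~\ref{tab:s(S)=3} via Theorem~\ref{thm:tables}: with $q = 3^{2m+1}$ and $Q_\pm := q \pm \sqrt{3q} + 1$ (as in Lemma~\ref{lemma:2G2}), these are $\pi_1(S) = \pi(q(q^2-1))$, $\pi_2(S) = \pi(Q_-)$, and $\pi_3(S) = \pi(Q_+)$, and then to bound the cocliques inside each component. For $t(S) \leq 5$, the structural input I would use is the description of $\Gamma({}^2\textnormal{G}_2(q))$ arising from the maximal subgroups of ${}^2\textnormal{G}_2(q)$ (equivalently, from the adjacency criterion of \cite{VasVdo1,VasVdo2} as recorded in \cite{NMexceptional}): the normalisers $Q_\pm{:}6$, the Borel subgroup $[q^3]{:}(q-1)$, and the subgroup $(2^2 \times \textnormal{D}_{(q+1)/2}){:}3$ show that each of $\pi(Q_-)$, $\pi(Q_+)$, $\pi(q-1)$ and $\pi(q+1)$ is a clique in $\Gamma(S)$, and that the vertex $2$ lies in both $\pi(q-1)$ and $\pi(q+1)$. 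A coclique therefore meets each of $\pi_2(S)$ and $\pi_3(S)$ in at most one vertex, and meets $\pi_1(S)$ in at most one odd prime dividing $q-1$, at most one odd prime dividing $q+1$, and possibly the prime $3$ (including $2$ would force the coclique to avoid every other prime of $q^2-1$, since $2$ lies in both cliques). Hence every coclique has size at most $1 + 1 + 3 = 5$, irrespective of the adjacencies of the characteristic prime $3$.

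For the second assertion I would first observe that the only candidates for isolated vertices are $\pi_2(S)$ and $\pi_3(S)$: the component $\pi_1(S)$ contains both $2$ and $3$, which are adjacent because the centraliser of an involution in $S$ is $2 \times \textnormal{PSL}_2(q)$, so $\pi_1(S)$ is never a single vertex. Thus the hypothesis that $\Gamma(S)$ contains two isolated vertices forces both $Q_-$ and $Q_+$ to be prime powers, whence by Lemma~\ref{lemma:2G2} they are both prime and $m \equiv 1 \pmod 3$; together with $m \geq 2$ this gives $m \geq 4$. Since $3 \nmid q^2-1$ and $Q_\pm \nmid q^2-1$, the three components are the disjoint sets $\{3\} \cup \pi(q^2-1)$, $\{Q_-\}$ and $\{Q_+\}$, so $|\pi(S)| = 3 + |\pi(q^2-1)|$ and it suffices to prove $|\pi(q^2-1)| \geq 6$.

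I would estimate $|\pi(q^2-1)| = |\pi(3^{4m+2}-1)|$ using Bang--Zsigmondy (Lemma~\ref{lem:zsigmondy}): for each divisor $d \geq 2$ of $4m+2$ there is a primitive prime divisor of $3^d-1$, and primitive prime divisors for distinct $d$ are distinct, so $|\pi(q^2-1)| \geq \tau(4m+2) - 1$, where $\tau$ denotes the number of positive divisors. Writing $4m+2 = 2(2m+1)$ with $2m+1$ odd gives $\tau(4m+2) = 2\tau(2m+1)$, and $m \equiv 1 \pmod 3$ implies $3 \mid 2m+1$, so $2m+1 = 3t$ with $t$ odd and $t \geq 3$. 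A short case analysis (splitting according to whether $t$ has a prime factor other than $3$) then shows $\tau(2m+1) \geq 4$, and hence $|\pi(q^2-1)| \geq \tau(4m+2) - 1 \geq 7$, for every admissible $m$ except $m = 4$, where $2m+1 = 9$ and the estimate yields only $5$.

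The one genuine obstacle is therefore the boundary case $m = 4$, where the generic primitive-prime count falls one short of the required $6$; I expect this to be the only point needing an explicit computation. Here I would simply factor
\[
3^{18} - 1 \;=\; 2^3 \cdot 7 \cdot 13 \cdot 19 \cdot 37 \cdot 757,
\]
the relevant feature being that the cyclotomic factor $\Phi_{18}(3) = 703 = 19 \cdot 37$ contributes two distinct primes rather than one, so that $|\pi(q^2-1)| = 6$ and $|\pi(S)| = 9$ exactly. Combining this with $|\pi(S)| \geq 10$ for all other admissible $m$ completes the proof.
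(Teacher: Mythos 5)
Your proof is correct and follows essentially the same route as the paper: the bound $t(S)\leq 5$ comes from the known clique structure of $\Gamma({}^2\mathrm{G}_2(q))$ (you derive it from tori and maximal subgroups, the paper cites the compact form from Zavarnitsine), and the second assertion is obtained exactly as in the paper by reducing to $m\equiv 1\pmod 3$ via Lemma~\ref{lemma:2G2}, counting primitive prime divisors of $3^{4m+2}-1$ with Bang--Zsigmondy, and disposing of the single boundary case $q=3^9$ by the explicit factorisation $3^{18}-1=2^3\cdot 7\cdot 13\cdot 19\cdot 37\cdot 757$. The only cosmetic difference is that you organise the Zsigmondy count via the divisor function $\tau(4m+2)$ rather than listing the indices $\{3,\ell,2\ell,3\ell,6\ell\}$ as the paper does; both counts are correct.
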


\begin{proof}
The fact that every coclique in $\Gamma(S)$ has size at most $5$ can be seen from the compact form of $\Gamma(S)$ (as defined in the proof of Lemma~\ref{lemma:2F4}), which, per \cite[Fig.~1]{Zavar}, is isomorphic to the graph shown below; see also \cite[Table~2]{VasVdo2}.
\begin{center}
\begin{tikzpicture}
\graph [no placement, empty nodes, nodes={circle,draw,fill=black,inner sep=0pt,minimum size=4pt}] { 
C[x=-1,y=-0.5]; 
D[x=-1,y=0.5];
2[x=1,y=-0.5]; 
3[x=1,y=0.5];  
A[x=0,y=-0.5];  
B[x=0,y=0.5] -- {2,3,A}
};
\end{tikzpicture}
\end{center}
It remains to prove the second assertion. 
Suppose that $m \geq 2$ and that $\Gamma(S)$ contains two isolated vertices. 
The latter assumption implies that the connected components of $\Gamma(S)$ consisting of the prime divisors of $q \pm \sqrt{3q} + 1$ must be prime powers. 
(This is immediate from Table~\ref{tab:s(S)=3}, given that the remaining connected component $\pi_1(S)$ clearly contains at least the vertices $2$ and $3$.) 
Lemma~\ref{lemma:2G2} therefore implies that $m \equiv 1 \pmod{3}$. 
Hence, $m = 3k+1$, where $k \geq 1$ because we are assuming that $m \geq 2$. 
Per Table~\ref{tab:s(S)=3}, the connected component $\pi_1(S)$ consists of $2$, $3$, and the odd prime divisors of $q^2-1$. 
We need to show that $q^2-1$ is divisible by at least five distinct odd primes. 
Given that $m = 3k+1$ with $k \geq 1$, we have $2m+1 = 3\ell$, where $\ell = 2k+1 \geq 3$. 
Therefore, $q^2 - 1 = 3^{6\ell} - 1$, which is divisible by $\phi_i = 3^i-1$ for all $i \in \{ 3, \ell, 2\ell, 3\ell, 6\ell \}$. 
Each of these $\phi_i$ has a primitive prime divisor by Lemma~\ref{lem:zsigmondy}, and none of these primitive prime divisors is equal to $2$ because $2$ is a primitive prime divisor of $3^2-1$. 
Therefore, $q^2-1$ is certainly divisible by at least five distinct primes, except possibly when $\ell=3$. 
For $\ell=3$, namely $q = 3^9$, a direct calculation shows that $q^2-1$ has exactly five distinct odd prime divisors. 
\end{proof}

\begin{table}[!t]
\small
\begin{tabular}{ll}
\toprule
Recognisability & Group \\
\midrule
Recognisable & $\mathbb{B}$, $\text{Co}_1$, $\text{Co}_2$, $\text{Fi}_{23}$, $\text{Fi}_{24}'$, $\text{J}_1$, $\text{J}_3$, $\text{J}_4$, $\text{Ly}$, $\text{M}_{22}$, $\text{M}_{23}$, $\text{M}_{24}$, $\mathbb{M}$, $\text{O'N}$, $\text{Suz}$, $\text{Th}$ \\
$2$-recognisable & $\text{HN}$, $\text{HS}$, $\text{M}_{11}$ \\
$3$-recognisable & $\text{Fi}_{22}$ \\
Unrecognisable & $\text{Co}_3$, $\text{He}$, $\text{J}_2$, $\text{M}_{12}$, $\text{McL}$, $\text{Ru}$ \\
\bottomrule
\end{tabular}
\caption{Recognisability of the sporadic simple groups by their labelled prime graphs. 
For proofs, see \cite{Hagie, Kon1,Kon2,MBCo1,MazurovShi,PraegerShi,Zavar}. 
Note that the group $\text{Ru}$ is unrecognisable, contrary to the assertion made in \cite{Kon1} that it is recognisable; cf. Remark~\ref{rem:Ru}. 
This is proved in Section~\ref{ss:Ru}.}
\label{tabSummary}
\end{table}

\section{Proof of Theorem~\ref{thm1} --- The unrecognisable groups} \label{s:unrec}

Here we prove the ``only if'' implication of the first assertion of Theorem~\ref{thm1}, and the second assertion of the theorem. 
That is, we show that if $G$ is a sporadic group {\em not} listed in Theorem~\ref{thm1}, then $G$ is unrecognisable by the isomorphism type of its prime graph, i.e. there are infinitely many pairwise non-isomorphic groups $H$ with $\Gamma(H) \cong \Gamma(G)$. 

By Table~\ref{tabSummary}, the groups 
\[
\textnormal{Co}_3,\; \textnormal{He},\; \textnormal{J}_2,\; \textnormal{M}_{12},\; \textnormal{McL}
\] 
are already known to be unrecognisable by their labelled prime graphs, so they are certainly unrecognisable by the isomorphism types of their prime graphs. 
Per Remark~\ref{rem:Ru}, the group $\text{Ru}$ is also unrecognisable by its labelled prime graph, but this requires a proof, which we provide in Section~\ref{ss:Ru}. 
In addition, it remains to show that the following groups are unrecognisable by the isomorphism types of their prime graphs:
\[
\textnormal{Co}_1,\; \textnormal{Co}_2,\; \textnormal{Fi}_{22},\; \textnormal{HN},\; \textnormal{HS},\; \textnormal{J}_1,\; \textnormal{J}_3,\; \textnormal{M}_{11},\; \textnormal{M}_{22},\; \textnormal{M}_{23},\; \textnormal{M}_{24},\; \textnormal{Suz}.
\] 

By \cite[Theorem~1.2]{CamMas}, a group $G$ is unrecognisable by its labelled prime graph if and only if there is a group $H$ with non-trivial soluble radical such that $\Gamma(H) = \Gamma(G)$. 
It therefore suffices to exhibit, for each sporadic group $G$ in the second list above, one group $H$ that has non-trivial soluble radical and satisfies $\Gamma(H) \cong \Gamma(G)$. 
Alternatively, we can simply exhibit a group $H$ with trivial soluble radical that satisfies $\Gamma(H) \cong \Gamma(G)$ and is known to be unrecognisable by its labelled prime graph. 

We need to be able to draw or otherwise describe the prime graphs of the above groups and various other groups. 
The prime graph of a group $G$ can be determined from the character table of $G$. 
The character tables of the sporadic groups and many other groups that arise in our proofs are available in the Atlas~\cite{ATLAS}, \textsf{GAP}~\cite{GAPbc,GAP} and/or \textsc{Magma}~\cite{Magma}.

\subsection{The cases $G = \textnormal{J}_1$ and $G = \textnormal{M}_{22}$} \label{ss:J1M22}
We show that in both of these cases, $\Gamma(G)$ is isomorphic to the prime graph of some Suzuki group ${}^2\text{B}_2(2^{2m+1})$. 
This is sufficient because the Suzuki groups are unrecognisable by their labelled prime graphs \cite[Proposition~4.1]{NMexceptional}. 
If $G = \textnormal{J}_1$, then $\Gamma(G)$ consists of the clique $\{2,3,5\}$ and the isolated vertices $7$, $11$, and $19$. 
By Table~\ref{tab:s(S)=4} and Lemma~\ref{lemma:suz}, the prime graph of the Suzuki group ${}^2\text{B}_2(2^{19})$ consists of the clique $\{5,229,457\}$ and the isolated vertices $2$, $524287$, and $525313$. 
Therefore, $\Gamma(G) \cong \Gamma({}^2\text{B}_2(2^{19}))$. 
If $G = \textnormal{M}_{22}$, then $\Gamma(G)$ consists of the edge $\{2,3\}$ and the isolated vertices $5$, $7$, and $11$. 
The prime graph of ${}^2\text{B}_2(2^7)$ consists of the edge $\{5,29\}$ and the isolated vertices $2$, $113$, and $127$. 
Therefore, $\Gamma(G) \cong \Gamma({}^2\text{B}_2(2^7))$.

\subsection{The case $G = \textnormal{M}_{11}$} \label{ss:M11}
We show that the prime graph of $\textnormal{M}_{11}$ is isomorphic to the prime graph of $\text{G}_2(3)$. 
This is sufficient because $\text{G}_2(3)$ is unrecognisable by its labelled prime graph \cite[Proposition~5.2]{NMexceptional}. 
The prime graphs of $\text{M}_{11}$ and $\text{G}_2(3)$ each consist of the edge $\{2,3\}$, the isolated vertex $7$, and one other isolated vertex, namely $5$ or $13$, respectively.

\subsection{The cases $G = \textnormal{HS}$ and $G = \textnormal{J}_{3}$} \label{ss:HS-J3}
If $G = \textnormal{HS}$ or $G = \textnormal{J}_{3}$, then $\Gamma(G)$ consists of the clique $\{2,3,5\}$ and two isolated vertices: $7$ and $11$ in the former case, and $17$ and $19$ in the latter. 
We claim that, in both cases, $\Gamma(G)$ is isomorphic to the prime graph of a certain group of shape $\mathbb{F}_4^{30} {:} \text{PSL}_2(61)$. 
The prime graph of $\text{PSL}_2(61)$ consists of the clique $\{2,3,5\}$ and the isolated vertices $31$ and $61$. 
The $2$-modular character table of $\text{PSL}_2(61)$, which can be computed in \textsc{Magma}, shows that $\text{PSL}_2(61)$ admits a $30$-dimensional module over $\mathbb{F}_4$ on which every element of order $31$ or $61$ fixes no non-zero vector. 
This can be verified by applying Lemma~\ref{lemma:brauer}, as can several similar claims made throughout the paper. 
(We shall not explicitly cite Lemma~\ref{lemma:brauer} in each case.) 
In other words, in the corresponding affine group $H = \mathbb{F}_4^{30} {:} \text{PSL}_2(61)$ (i.e. the semidirect product of the aforementioned module $\mathbb{F}_4^{30}$, regarded as an abelian group, by $ \text{PSL}_2(61)$), no element of order $31$ or $61$ centralises an element of order~$2$. 
Therefore, $\Gamma(H) = \Gamma(\text{PSL}_2(61)) \cong \Gamma(G)$. 
Given that $H$ has non-trivial soluble radical, it follows that that $G$ is unrecognisable by the isomorphism type of $\Gamma(G)$.

\subsection{The case $G = \textnormal{M}_{23}$} \label{ss:M23}
The prime graph of $G = \text{M}_{23}$ is shown below.
\begin{center}
\begin{tikzpicture}
\graph [no placement] { 23[x=0,y=1]; 11[x=0,y=0] ; 2[x=1,y=1] -- 3[x=2,y,=1]; 3 -- 5[x=2,y=0]; 7[x=1,y=0] -- 2 };
\end{tikzpicture}
\end{center}
Let $S = {}^2\text{B}_2(q)$ with $q=2^7$. 
We claim that $\Gamma(G)$ is isomorphic to the prime graph of a certain affine group with point stabiliser $\text{Aut}(S)$. 
As noted in Section~\ref{ss:J1M22}, $\Gamma(S)$ consists of the edge $\{5,29\}$ and the isolated vertices $2$, $113$, and $127$. 
The outer automorphism group of $S$, which consists entirely of field automorphisms, is cyclic of order $7$. 
If $\sigma$ is a generator of $\text{Out}(S)$, then $C_{\text{Aut}(S)}(\sigma) \cap S \cong {}^2\text{B}_2(2) \cong 5{:}4$, so $\Gamma(\text{Aut}(S))$ is obtained from $\Gamma(S)$ by adding the edges $\{2,7\}$ and $\{5,7\}$. 
In particular, $\Gamma(\text{Aut}(S)) \cong \Gamma(G)$. 
Let $W$ be the natural module for $\text{Sp}_4(q)$. 
When viewed as a subgroup of $\text{Sp}_4(q)$, the group $S$ acts irreducibly on $W$. 
The group $\text{Aut}(S)$ acts irreducibly on $V = \oplus_{i=0}^6 W^{(i)}$, where $W^{(i)}$ is the module obtained by composing the representation of $S$ corresponding to $W$ with the $i$th power of the automorphism $x \mapsto x^2$ of $\mathbb{F}_q$. 
The modules $W^{(i)}$ can be constructed in \textsc{Magma} using the function \texttt{SuzukiIrreducibleRepresentation}, whence one can check that no element of odd prime order in $\text{Aut}(S)$ fixes a non-zero vector in $V$.
Therefore, the affine group $H = V {:} \text{Aut}(S)$ has the same prime graph as $\text{Aut}(S)$. 
In particular, $\Gamma(G) \cong \Gamma(H)$.

\subsection{The case $G = \textnormal{Co}_2$} \label{ss:Co2}
The prime graph of $G=\textnormal{Co}_2$ is shown below.
\begin{center}
\begin{tikzpicture}
\graph [no placement] { 23[x=0,y=1]; 11[x=0,y=0]; 2[x=1,y=1];  {3[x=2,y,=1],5[x=2,y=0]} -- 2; 7[x=1,y=0] -- 2 ; 3 -- 5 };
\end{tikzpicture}
\end{center}
Let $S = {}^2\text{B}_2(q)$ with $q=2^{19}$. 
We claim that $\Gamma(G)$ is isomorphic to the prime graph of a certain affine group with point stabiliser $S$. 
Recall from Section~\ref{ss:J1M22} that $\Gamma(S)$ consists of the clique $\{5,229,457\}$ and the isolated vertices $2$, $524287$, and $525313$. 
Let $W$ be the natural module for $\text{Sp}_4(q)$. 
When viewed as a subgroup of $\text{Sp}_4(q)$, the group $S$ acts irreducibly on $W$. 
It also acts irreducibly on the module $W^{(1)}$ obtained by composing the representation of $S$ corresponding to $W$ with the field automorphism $x \mapsto x^2$. 
By \cite[Lemma~1]{MartineauEven}, $V := W \otimes W^{(1)}$ is also an irreducible $\mathbb{F}_qS$-module. 
The module $V$ can be constructed in \textsc{Magma} as described in Section~\ref{ss:M23}, and one can check that the only elements of odd prime order $S$ that fix a non-zero vector in $V$ are elements of order $5$. 
Therefore, the prime graph of the affine group $H = V {:} S$ is obtained from $\Gamma(S)$ by adding the edge $\{2,5\}$, which yields a graph isomorphic to $\Gamma(G)$.

\subsection{The cases $G = \textnormal{M}_{24}$ and $G = \textnormal{Suz}$} \label{ss:M24-Suz}
If $G = \textnormal{M}_{24}$ or $G = \textnormal{Suz}$, then $\Gamma(G)$ is the graph
\begin{center}
\begin{tikzpicture}
\graph [no placement] { $p$[x=0,y=1]; 11[x=0,y=0]; 2[x=1,y=1];  {3[x=2,y,=1],5[x=2,y=0]} -- 2; 7[x=1,y=0] -- {2,3} ; 3 -- 5 };
\end{tikzpicture}
\end{center}
with $p=13$ or $23$ respectively. 
Consider again the group $S = {}^2\text{B}_2(q)$ with $q=2^{19}$. 
In the notation of Section~\ref{ss:Co2}, \cite[Lemma~1]{MartineauEven} implies that $V := W \otimes W^{(1)} \otimes W^{(4)}$ is an irreducible $\mathbb{F}_qS$-module (where $W^{(4)}$ is obtained by composing the representation of $S$ corresponding to $W$ with the fourth power of the Frobenius automorphism). 
The module $V$ can be constructed in \textsc{Magma}, and one can check that the only elements of odd prime order in $S$ that fix a non-zero vector in $V$ are elements of order $5$ or $229$. 
Therefore, the prime graph of the affine group $H = V {:} S$ is obtained from $\Gamma(S)$ by adding the edges $\{2,5\}$ and $\{2,229\}$, which yields a graph isomorphic to $\Gamma(G)$.

\subsection{The case $G = \textnormal{Ru}$} \label{ss:Ru}
As noted in Remark~\ref{rem:Ru}, we need to show that $G = \textnormal{Ru}$ is unrecognisable by its labelled prime graph, which is as follows.
\begin{center}
\begin{tikzpicture}
\graph [no placement] { 29[x=0,y=1]; 13[x=0,y=0] -- 2[x=1,y=1];  {3[x=2,y,=1],5[x=2,y=0]} -- 2; 7[x=1,y=0] -- 2 ; 3 -- 5 };
\end{tikzpicture}
\end{center}
The $2$-modular character table of $G$, which is available in \textsf{GAP}, shows (via Lemma~\ref{lemma:brauer}) that $G$ admits a $28$-dimensional module over $\mathbb{F}_2$ such that every element of order $29$ acts fixed-point freely on non-zero vectors. 
The corresponding affine group $H=\mathbb{F}_2^{28}{:}G$ therefore satisfies $\Gamma(H) = \Gamma(G)$, so \cite[Theorem~1.2]{CamMas} implies that $G$ is unrecognisable by its labelled prime graph. 
(The error in \cite{Kon1} arose because the author incorrectly asserted that $G$ does {\em not} admit such a module.)

\subsection{The case $G = \textnormal{Fi}_{22}$} \label{ss:Fi22}
The prime graph of $G = \textnormal{Fi}_{22}$ is shown below.
\begin{center}
\begin{tikzpicture}
\graph [no placement] { 13[x=0,y=1]; 11[x=0,y=0] -- 2[x=1,y=1];  {3[x=2,y,=1],5[x=2,y=0]} -- 2; 7[x=1,y=0] -- {2,3} ; 3 -- 5 };
\end{tikzpicture}
\end{center}
We claim that $\Gamma(G)$ is isomorphic to the prime graph of a certain group of shape $\mathbb{F}_3^{22}{:}\text{M}_{24}$. 
The prime graph of $\text{M}_{24}$ is shown in Section~\ref{ss:M24-Suz}, with $p=23$. 
According to the $3$-modular character table of $\text{M}_{24}$ in \textsf{GAP}, there is a $22$-dimensional module for $\text{M}_{24}$ over $\mathbb{F}_3$ on which elements of order $11$ fix some non-zero vector and elements of order $23$ fix no non-zero vector. 
If $H = \mathbb{F}_3^{22}{:}\text{M}_{24}$ is the corresponding affine group, then $\Gamma(H)$ is obtained from $\Gamma(\text{M}_{24})$ by adding the edge $\{3,11\}$, so $\Gamma(H) \cong \Gamma(G)$.

\subsection{The case $G = \textnormal{HN}$} \label{ss:HN}
The prime graph of $G = \textnormal{HN}$ is shown below.
\begin{center}
\begin{tikzpicture}
\graph [no placement] { 19[x=0,y=1]; 11[x=0,y=0] -- 2[x=1,y=1];  {3[x=2,y,=1],5[x=2,y=0]} -- 2; 7[x=1,y=0] -- {2,3,5} ; 3 -- 5 };
\end{tikzpicture}
\end{center}
We claim that $\Gamma(G)$ is isomorphic to the prime graph of a certain group of shape $\mathbb{F}_3^{12}{:}\text{Alt}_{13}$. 
The prime graph of $\text{Alt}_{13}$ consists of the clique $\{2,3,5,7\}$ and the isolated vertices $11$ and $13$.
According to the $3$-modular character table of $\text{Alt}_{13}$ in \textsf{GAP}, there is a $12$-dimensional module for $\text{Alt}_{13}$ over $\mathbb{F}_3$ on which elements of order $11$ fix some non-zero vector and elements of order $13$ fix no non-zero vector. 
If $H = \mathbb{F}_3^{12}{:}\text{Alt}_{13}$ is the corresponding affine group, then $\Gamma(H)$ is obtained from $\Gamma(\text{Alt}_{13})$ by adding the edge $\{3,11\}$, so $\Gamma(H) \cong \Gamma(G)$.

\subsection{The case $G = \textnormal{Co}_1$} 
The prime graph of $G = \textnormal{Co}_1$ is shown below.
\begin{center}
\begin{tikzpicture}
\graph [no placement] { 
23[x=-1,y=0]; 
13[x=0,y=-0.5]; 11[x=0,y=0.5]; 
3[x=1,y=-0.5]; 2[x=1,y=0.5]; 
7[x=2,y=-0.5]; 5[x=2,y=0.5]; 
2 -- {3,5,7,11,13}; 
3 -- {5,7,11,13}; 
5 -- 7
};
\end{tikzpicture}
\end{center}
Consider the group $H = (\mathbb{F}_2^{11} \times \mathbb{F}_r^{22}){:}\text{M}_{23}$, where $\mathbb{F}_2^{11}$ is either of the modules for $\text{M}_{23}$ described in Section~\ref{ss:Ru}, and $\mathbb{F}_r^{22}$ is a faithful irreducible module of dimension $22$ for $\text{M}_{23}$ over some field $\mathbb{F}_r $ of order $r$ coprime to $|\text{M}_{23}|$. 
We claim that $\Gamma(G) \cong \Gamma(H)$. 
By the argument in Section~\ref{ss:Ru}, the subgroup $\mathbb{F}_2^{11}{:}\text{M}_{23}$ of $H$ yields the subgraph
\begin{center}
\begin{tikzpicture}
\graph [no placement] { 
23[x=-1,y=0]; 
11[x=0,y=0.5]; 7[x=0,y=-0.5]; 
$r$[x=1,y=-0.5]; 5[x=2,y=0.5]; 
2[x=1,y=0.5]; 3[x=2,y=-0.5]; 
2 -- {3,5,7,11}; 
3 -- 5
};
\end{tikzpicture}
\end{center}
of $\Gamma(H)$. 
(The vertex $r$ is not part of this subgraph, but we include it in the picture for clarity.)
To obtain $\Gamma(H)$ itself, it remains to add the edge $\{2,r\}$ and an edge $\{p,r\}$ for every odd $p \in \pi(\text{M}_{23})$ such that some element of order $p$ in $\text{M}_{23}$ fixes a non-zero vector in the module $\mathbb{F}_r^{22}$. 
The character table of $\text{M}_{23}$ shows that elements of all prime orders other than $23$ fix a non-zero vector in $\mathbb{F}_r^{22}$, so we end up with a graph isomorphic to $\Gamma(G)$.

\section{Proof of Theorem~\ref{thm1} --- The recognisable groups} \label{s:rec}

We now complete the proof of Theorem~\ref{thm1} by showing that if $G$ is any of the groups
\begin{equation} \label{recogG}
\mathbb{B},\; \textnormal{Fi}_{23},\; \textnormal{Fi}_{24}',\; \textnormal{J}_4,\; \textnormal{Ly},\; \mathbb{M},\; \textnormal{O'N},\; \textnormal{Th},
\end{equation}
then $G$ is recognisable by the isomorphism type of its prime graph.

The notation established in Section~\ref{s:prelim} is used freely. 
The argument for each group begins with the following observation based on Theorems~\ref{GKthm} and~\ref{GKthmVasilev}.

\begin{lemma} \label{lemma:obs}
If $G$ is one of the groups appearing in \eqref{recogG} and $H$ is a group not isomorphic to $G$ such that $\Gamma(H) \cong \Gamma(G)$, then all of the following statements hold. 
\begin{itemize}
\item[(i)] There exists a non-abelian simple group $S$ not isomorphic to $G$ such that 
\[
S \trianglelefteq \overline{H} := H/F(H) \leq \textnormal{Aut}(S).
\] 
Moreover, $s(S) \geq s(G)$, and $\Gamma(S)$ has at most as many vertices and at most as many edges as $\Gamma(G)$. 
\item[(ii)] For every coclique $\rho \subseteq \pi(H)$ in $\Gamma(H)$ of size at least $3$, at most one prime in $\rho$ divides $|H|/|S| = |F(H)| \cdot |\overline{H}/S|$. In particular, $t(S) \geq t(H)-1$. 
\item[(iii)] Every prime $r \in \pi(H)$ that is not adjacent to $2$ in $\Gamma(H)$ does not divide $|H|/|S| = |F(H)| \cdot |\overline{H}/S|$. In particular, $t(2,S) \geq t(2,H)$. 
\end{itemize}
\end{lemma}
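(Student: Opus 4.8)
The plan is to derive Lemma~\ref{lemma:obs} directly from the two Gruenberg--Kegel-type theorems (Theorems~\ref{GKthm} and~\ref{GKthmVasilev}) together with Remark~\ref{rem:S=T}, after first checking that the groups in \eqref{recogG} satisfy the required hypotheses. The key preliminary observation is that every group $G$ in \eqref{recogG} has $s(G) \geq 3$ (indeed each appears in Table~\ref{tab:s(S)=3} or Table~\ref{tab:s(S)=4}, or is $\textnormal{J}_4$ with $s(G)=6$ by Lemma~\ref{lemma:J4}); in particular $t(G) \geq 3$, and since $2$ is not isolated in any of these graphs, $t(2,G) \geq 2$. Because $\Gamma(H) \cong \Gamma(G)$, the same numerical invariants hold for $H$: we have $t(H) = t(G) \geq 3$ and $t(2,H) = t(2,G) \geq 2$. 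Thus $H$ satisfies the hypotheses of both theorems.

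First I would apply Theorem~\ref{GKthm}. Since $\Gamma(H)$ is disconnected (it is isomorphic to the disconnected graph $\Gamma(G)$), $H$ falls into case (i), (ii), or (iii). Cases (i) and (ii) must be excluded: a Frobenius or $2$-Frobenius group is soluble or has a prime graph with at most two components that are near-cliques, and in particular $t(H) \leq 2$ would follow in the $2$-Frobenius case, while a Frobenius group would also force the component structure to contradict $s(H) = s(G) \geq 3$. (More carefully, in case (i) the graph has exactly two components, contradicting $s(H) \geq 3$; in case (ii) it likewise has exactly two components.) So $H$ is in case (iii), giving a non-abelian simple $S$ with $S \trianglelefteq \overline{H} = H/F(H) \leq \textnormal{Aut}(S)$ and $s(S) \geq s(H) = s(G)$. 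Then I would invoke Theorem~\ref{GKthmVasilev}: part (i) produces a simple group $T$ with $T \trianglelefteq H/R(H) \leq \textnormal{Aut}(T)$, and Remark~\ref{rem:S=T} identifies $T \cong S$. The dichotomy in Theorem~\ref{GKthmVasilev}(iii) must land in case (a) rather than (b), because case (b) forces $t(H)=3$, $t(2,H)=2$, and $T \cong \textnormal{Alt}_7$ or $\textnormal{PSL}_2(q)$; I would rule this out by noting that $\textnormal{Alt}_7$ and $\textnormal{PSL}_2(q)$ have at most three components each comprising a clique (Lemma~\ref{lemma:A1}), so they cannot match $\Gamma(G)$ for the groups in \eqref{recogG}, whose prime graphs have components that are not all cliques or have $s(G) = 4$. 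With case (a) secured, statements (ii) and (iii) of the lemma are exactly Theorem~\ref{GKthmVasilev}(ii) and (iii)(a), using $|R(H)| \cdot |\hat{H}/T| = |H|/|T| = |H|/|S|$ and $S \cong T$. The inequalities on vertices and edges in statement (i) follow because $\Gamma(S)$ is (essentially) an induced subgraph of $\Gamma(H) \cong \Gamma(G)$ after the soluble part is factored out, so it has no more vertices and no more edges.

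The remaining point is that $S \not\cong G$. Since $H \not\cong G$ by hypothesis, if we had $S \cong G$ then $\overline{H} = H/F(H)$ would be an almost simple group with socle $G$; but each $G$ in \eqref{recogG} has trivial outer automorphism group or else $|\textnormal{Out}(G)|$ is small and can be excluded by comparing prime graphs, and more fundamentally $F(H)$ would have to be trivial (else $|H|/|G| > 1$ introduces a prime not adjacent as required, contradicting statement (iii) applied to the isolated-type vertices of $\Gamma(G)$), forcing $H \cong G$, a contradiction. I would phrase this cleanly as: $S \cong G$ together with the component constraints would return $H \cong G$.

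The main obstacle I anticipate is the careful exclusion of Theorem~\ref{GKthmVasilev}(iii)(b) and the verification that $S \not\cong G$ does not secretly collapse back to $H \cong G$; both require a small amount of case-checking against the explicit component data in Tables~\ref{tab:s(S)=3} and~\ref{tab:s(S)=4} and the clique structure from Lemmas~\ref{lemma:A1} and~\ref{lemma:G2}. Everything else is a direct transcription of the two theorems, so the proof should be short once these two exclusions are handled.
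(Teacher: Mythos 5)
Your overall strategy --- reduce to case (iii) of Theorem~\ref{GKthm} via $s(H)\geq 3$, then import parts (ii) and (iii) from Theorem~\ref{GKthmVasilev} via Remark~\ref{rem:S=T} --- is the same as the paper's, but two steps are not correctly closed. First, your exclusion of case (b) of Theorem~\ref{GKthmVasilev}(iii) rests on the claim that $\mathrm{Alt}_7$ and $\mathrm{PSL}_2(q)$ ``cannot match $\Gamma(G)$''; but the group $T$ there is only the socle of $H/R(H)$, so $\Gamma(T)$ is not required to match $\Gamma(G)$, merely to sit inside $\Gamma(H)$ subject to the component and coclique constraints --- this is not a valid criterion for exclusion. (Your justification ``$t(2,G)\geq 2$ since $2$ is not isolated'' is also faulty: non-isolation of $2$ says nothing about cocliques through $2$.) The correct and immediate exclusion, which is the one the paper uses, is available to you: since $s(H)\geq 3$, the vertex $2$ together with one vertex from each of two other components forms a coclique, so $t(2,H)\geq 3>2$, which is incompatible with case (b).

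Second, and more seriously, your argument that $S\not\cong G$ has a genuine gap. You assert that $F(H)$ ``would have to be trivial'' because otherwise $|H|/|G|>1$ introduces a prime violating part (iii); but $F(H)$ could be a non-trivial $2$-group (or any group with $\pi(F(H))\subseteq\pi_1(S)$), adding no new vertices to $\Gamma(H)$ and not obviously any new edges, and ruling this out from scratch would amount to redoing the module-theoretic analysis of \cite{MBCo1}. The paper's argument uses an ingredient you never invoke: every group in \eqref{recogG} is already known to be recognisable by its \emph{labelled} prime graph (Table~\ref{tabSummary}). If $S\cong G$, then $\Gamma(G)=\Gamma(S)$ is a subgraph of $\Gamma(H)$ with the same numbers of vertices and edges as $\Gamma(H)\cong\Gamma(G)$, forcing $\Gamma(H)=\Gamma(G)$ as labelled graphs, whereupon labelled recognisability gives $H\cong G$, contradicting the hypothesis. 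Without appealing to that prior result, the claim $S\not\cong G$ does not follow from what you have written.
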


\begin{proof}
By Theorem~\ref{thm:tables}, $\Gamma(H)$ has at least three connected components, so Theorem~\ref{GKthm} implies that there exists a non-abelian simple group $S$ such that $\overline{H}$ is almost simple with socle $S$, and that $s(S) \geq s(G)$. 
Moreover, $S$ cannot be isomorphic to $G$ because $G$ is recognisable by its labelled prime graph (see Table~\ref{tabSummary}), i.e. if $S \cong G$ then $\Gamma(H)$ and $\Gamma(G)$ are actually equal, so $H \cong G$, in contradiction with our assumption. 
The final assertion of (i) holds because if $\Gamma(S)$ had more vertices or more edges than $\Gamma(G)$, then so would $\Gamma(H)$, given that $S$ is a subquotient of $H$, and so $\Gamma(H)$ would not be isomorphic to $\Gamma(G)$. 

Assertions (ii) and (iii) follow from Theorem~\ref{GKthmVasilev} and Remark~\ref{rem:S=T}. 
Note that $H$ satisfies the hypotheses of Theorem~\ref{GKthmVasilev} because $s(H) \geq 3$, i.e. every vertex belongs to a coclique of size at least $3$, so certainly $t(H) \geq 3$ and $t(H,2) \geq 3 > 2$. 
In particular, case (b) of Theorem~\ref{GKthmVasilev}(iii) is ruled out because $t(H,2) > 2$, so case (a) must hold. 
\end{proof}

Note that if $G$, $H$, and $S$ are groups as in Lemma~$\ref{lemma:obs}$, and if $\Gamma(S)$ has fewer vertices than $\Gamma(H)$, then every prime divisor $r$ of $|H|/|S|$ divides either $|F(H)|$ or $|\text{Out}(S)|$ (or both). 
The following lemma applies when at least one such prime divides $|F(H)|$.

\begin{lemma} \label{lemma:modules}
Let $G$, $H$, and $S$ be groups as in Lemma~$\ref{lemma:obs}$. 
If $r$ is a prime dividing $|F(H)|$, then $\overline{H} := H/F(H)$ admits a faithful irreducible module $V$ in characteristic $r$ with the following property: if there is an element of prime order $p$ in $S \leq \overline{H}$ that fixes a non-zero vector in $V$, then $p$ is adjacent to $r$ in $\Gamma(H)$.
\end{lemma}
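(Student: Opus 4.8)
The plan is to realise $V$ as a chief factor of $H$ lying inside a Sylow $r$-subgroup of $F(H)$, and to extract both the fixed-point property and faithfulness from the coprime action of $\overline{H}$ on this section. First I would set $P := O_r(H)$, the Sylow $r$-subgroup of the nilpotent group $F(H)$; since $r \mid |F(H)|$ we have $P \neq 1$, and $P$ is characteristic in $F(H)$, hence normal in $H$. Writing $F(H) = P \times Q$ with $Q$ the Hall $r'$-subgroup, one checks that $F(H)$ acts trivially by conjugation on the Frattini quotient $W := P/\Phi(P)$: the factor $Q$ centralises $P$, while $P$ centralises $P/\Phi(P)$ because $[P,P] \leq \Phi(P)$. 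Thus $W$ is naturally an $\mathbb{F}_r\overline{H}$-module whose composition factors are exactly the chief factors of $H$ lying between $\Phi(P)$ and $P$, each an irreducible $\mathbb{F}_r\overline{H}$-module; the module $V$ will be one of these.

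Next I would establish the fixed-point property for an arbitrary composition factor $V$ of $W$. Fix a prime $p$; since the conclusion ``$p$ adjacent to $r$'' is only meaningful for $p \neq r$, I may assume $p \neq r$. Suppose $g \in S$ has order $p$ and fixes a non-zero vector of $V$. Choosing a $p$-element $y \in H$ mapping to $g$ (the $p$-part of any lift works, as $g$ has prime order $p$), the action of $y$ on $W$ agrees with that of $g$, so by Lemma~\ref{lemma:brauer} and the additivity of Brauer characters on composition factors for the $r'$-element $y$, the hypothesis $\dim C_V(y) > 0$ forces $\dim C_W(y) > 0$. Hence $y$ fixes a non-zero vector of $W = P/\Phi(P)$, and coprime action of the $p$-group $\langle y \rangle$ on the $r$-group $P$ gives $C_P(y) \neq 1$ (fixed points on $P/\Phi(P)$ lift to fixed points on $P$). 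Taking an element $u$ of order $r$ in $C_P(y)$, the commuting pair $y,u$ of coprime orders yields an element of order $pr$ in $H$, so $p$ is adjacent to $r$ in $\Gamma(H)$, as required.

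It remains to choose $V$ so that $\overline{H}$ acts faithfully, and this is where I expect the real work to lie. Because $\overline{H}$ is almost simple with socle $S$, an irreducible $\mathbb{F}_r\overline{H}$-module is faithful unless $S$ lies in its kernel, so I must rule out the possibility that $S$ acts trivially on every composition factor of $W$. Were that to happen, the image of $S$ in $\mathrm{GL}(W)$ would stabilise a composition series while centralising every factor, and so would lie in the (nilpotent) stability group of that series, which is an $r$-group; as $S$ is non-abelian simple, this forces $S$ to act trivially on $W$ itself, i.e. $[\hat{S},P] \leq \Phi(P)$ for the preimage $\hat{S}$ of $S$ in $H$. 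Then every $p$-element of $\hat{S}$ with $p \neq r$ centralises $P/\Phi(P)$, and the coprime-action argument above shows that $p$ is adjacent to $r$ for every prime $p \in \pi(S)$. Consequently $\pi(S) \cup \{r\}$ would lie in a single connected component of $\Gamma(H)$.

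Finally I would derive a contradiction from disconnectedness. Since $s(H) = s(G) \geq 3$, selecting one prime from each connected component of $\Gamma(H)$ yields a coclique of size at least $3$; by Lemma~\ref{lemma:obs}(ii) at most one of these primes divides $|H|/|S|$, so at least two lie in $\pi(S)$. These two primes lie in distinct components, contradicting the conclusion that all of $\pi(S)$ lies in one component. Hence some composition factor of $W$ is faithful, and taking $V$ to be such a factor completes the argument. The main obstacle is precisely this faithfulness step: the delicate point is that $S$ lives in the quotient $\overline{H} = H/F(H)$ whereas $P$ lives inside $F(H)$, so the conjugation action is defined only on sections of $P$, and one must combine the stability-group argument with the component count of Lemma~\ref{lemma:obs}(ii) to exclude the degenerate configurations (such as $H \approx S \times P$) in which no faithful section exists.
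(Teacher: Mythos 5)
Your argument is correct, and it arrives at the same two essential destinations as the paper's proof --- a concrete $r$-section of $F(H)$ serving as the module, and a contradiction with the disconnectedness of $\Gamma(H)$ if $S$ were to centralise it --- but by a genuinely different technical route. The paper takes $V$ to be a chief factor $L/K$ of $H$ with $K < L \leq F(H)$ and $V$ elementary abelian of exponent $r$, and proves directly that $C_{H/K}(V) = F(H)/K$ (via $F(H/K)=F(H)/K$ together with the fact that every non-trivial normal subgroup of the almost simple group $\overline{H}$ contains $S$); faithfulness and the fixed-point property then come out simultaneously, because a fixed vector $v \in L/K$ commutes with a $p$-element of $H/K$ inside the quotient $H/K$ itself, so no lifting is needed. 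Your choice of $W = O_r(H)/\Phi(O_r(H))$ and its composition factors costs you two extra steps the paper avoids --- the Brauer-character/semisimplicity argument to push a fixed vector from a composition factor up to $W$, and the coprime-action lemma to pull it down to $O_r(H)$ and manufacture an element of order $pr$ --- plus the stability-group argument to rule out $S$ centralising every factor; in exchange you get the fixed-point property for free on every composition factor and only have to select a faithful one. Your closing contradiction also differs: you invoke Lemma~\ref{lemma:obs}(ii) via a coclique of component representatives, whereas the paper uses Lemma~\ref{lemma:obs}(iii) to show that $\Gamma(H)$ would be connected outright; both are valid. Two cosmetic points: Lemma~\ref{lemma:brauer} is stated only for irreducible modules, so you should say explicitly that $W$ restricted to $\langle y\rangle$ is semisimple by Maschke and that the fixed-point dimension is therefore additive over composition factors; and, exactly as in the paper's own phrasing, your fixed-point conclusion tacitly assumes $p \neq r$, which is harmless since that is the only case in which the lemma is applied.
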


\begin{proof}
Our argument is similar to those given in \cite[Section~1]{Kon2} and \cite[Section~2]{MBCo1}. 
Choose two neighbouring terms $K$ and $L$ of a chief series for $H$ such that $K < L \leq F(H)$ and $V = L/K$ is an elementary abelian $r$-group. 
Because $V$ is a minimal normal subgroup of $H/K$, it may be regarded as an irreducible module for $H/K$ in characteristic $r$. 
We prove below that the centraliser $C_{H/K}(V)$ of $V$ in $H/K$ is equal to $F(H)/K$, whence it follows that $V$ may be regarded as a {\em faithful} irreducible module for $(H/K)/(F(H)/K) \cong H/F(H) = \overline{H}$. 
If there exists $h \in S \leq \overline{H}$ of prime order $p$ that fixes some non-zero vector $v \in V$, then $h$ and $v$ commute in the affine group $V{:}\overline{H}$, so $\Gamma(H)$ contains the edge $\{p,r\}$. 

It remains to show that $C_{H/K}(V) = F(H)/K$. 
We first claim that $F(H/K) = F(H)/K$. 
We have $F(H/K) = A/K$ for some normal subgroup $A$ of $H$, and we are claiming that $A = F(H)$. 
Note that $F(H) \trianglelefteq A$ because $F(H)/K \trianglelefteq F(H/K)$, given that $F(H)/K$ is nilpotent and normal in $H/K$. 
Considering the quotient of $H/K$ by $F(H)/K$, we find~that
\[
A/F(H) \cong (A/K)/(F(H)/K) \trianglelefteq (H/K)/(F(H)/K) \cong H/F(H) = \overline{H}.
\]
In particular, $A/F(H)$ is a nilpotent normal subgroup of an almost simple group, so $A = F(H)$ as claimed. 
Next, note that $F(H)/K \trianglelefteq C_{H/K}(V)$ because $F(H)/K = F(H/K)$ centralises every minimal normal subgroup of $H/K$. 
Note also that $C_{H/K}(V)$ is normal in $H/K$. 
If the inclusion of $F(H)/K$ in $C_{H/K}(V)$ is strict, then $\overline{C} := C_{H/K}(V)/(F(H)/K)$ is a proper normal subgroup of $(H/K) / (F(H)/K) \cong H/F(H) = \overline{H}$. 
Therefore, $\overline{C}$ contains $S$, and so $r$ is adjacent to every prime divisor of $|S|$ in $\Gamma(H)$. 
If $\pi(H)=\pi(S)$, then it follows that $\Gamma(H)$ is connected, contradicting the fact that $\Gamma(H)$ is disconnected in all cases considered in Lemma~\ref{lemma:obs}. 
Otherwise, $\pi(H) \setminus \pi(S)$ is non-empty, and Lemma~\ref{lemma:obs}(iii) implies that every prime in this set is adjacent to the vertex $2$ in $\Gamma(H)$, so again $\Gamma(H)$ is connected and we have a contradiction. 
Therefore, $C_{H/K}(V) = F(H)/K$ as claimed.
\end{proof}

\begin{remark}
We take this opportunity to point out two (trivial) typos in \cite[Section~2]{MBCo1}. 
On p.~195, the definition ``$V = K/L$'' should be replaced by ``$V = L/K$''. 
On p.~196, the sentence ``Note that there are exactly 8 conjugacy classes of elements of odd order in $\text{Co}_1$ that intersect both $\text{Co}_3$ and $H$.'' should be replaced by ``Note that there are exactly 8 conjugacy classes of elements of odd order in $H$ that belong to $\text{Co}_1$-classes that also intersect $\text{Co}_3$.''. 
(These typos do not affect the correctness of the associated arguments.)
We also note that \cite[Table~1]{MBCo1} should be amended in light of Remark~\ref{rem:Ru}.
\end{remark}

Let us now proceed with the proof of Theorem~\ref{thm1} for the groups in \eqref{recogG}.

\subsection{The case $G = \textnormal{J}_4$} 
By \cite[Theorem~B]{Zavar}, $\text{J}_4$ is the only group whose prime graph has $6$ connected components, so $\text{J}_4$ is recognisable by the isomorphism type of its prime graph.


\subsection{The case $G = \mathbb{M}$} 
The prime graph of $\mathbb{M}$ has $|\pi(\mathbb{M})| = 15$ vertices and $s(\mathbb{M}) = 4$ connected components. 
Moreover, $t(\mathbb{M}) \geq 11$ because the vertices other than $2$, $3$, $5$, and~$7$ form a coclique. 
(A picture of $\Gamma(\mathbb{M})$ can be found in \cite[Fig.~5]{Zavar}.)

Suppose towards a contradiction that $H$ is a group not isomorphic to $\mathbb{M}$ such that $\Gamma(H) \cong \Gamma(\mathbb{M})$. 
By Lemma~\ref{lemma:obs}(i), there is a non-abelian simple group $S$ not isomorphic to $\mathbb{M}$ such that $S \trianglelefteq \overline{H} := H/F(H) \leq \text{Aut}(S)$, and in particular $s(S) \geq 4$ and $|\pi(S)| \leq 15$.  
It follows from Theorem~\ref{thm:tables} and Lemma~\ref{lemma:E8} that $S$ must be one of the groups
\[
\text{PSL}_3(4),\; {}^2\text{E}_6(2),\; \text{M}_{22},\; \text{J}_1, \; \text{O'N},\; \text{Ly},\; \text{Fi}_{24}',\; \text{J}_4, \text{ or } {}^2\text{B}_2(2^{2m+1}) \text{ for some } m \geq 1.
\]

Lemma~\ref{lemma:obs}(ii) implies that $t(S) \geq t(H)-1 = 10$. 
In particular, $\Gamma(S)$ must have at least $10$ vertices, which rules out all of the possibilities from the above list except $\text{J}_4$ and $^2\text{B}_2(2^{2m+1})$ (for certain $m$). 
Per Lemma~\ref{lemma:J4}, the prime graph of $\text{J}_4$ has exactly $10$ vertices, and they do not form a coclique, so $S \neq \text{J}_4$. 
On the other hand, $t({}^2\text{B}_2(2^{2m+1})) = 4 < 10$ for all $m \geq 1$ by Lemma~\ref{lemma:suz}, so $S \neq {}^2\text{B}_2(2^{2m+1})$ for any $m \geq 1$.

\subsection{The case $G = \textnormal{Fi}_{24}'$} \label{ss:Fi24}
The prime graph of $\text{Fi}_{24}'$ is shown below.
\begin{center}
\begin{tikzpicture}
\graph [no placement] { 
29[x=-2,y=0]; 
23[x=-1,y=0.5]; 17[x=-1,y=-0.5]; 
13[x=0,y=0.5]; 11[x=0,y=-0.5]; 
2[x=1,y=0.5]; 3[x=1,y=-0.5]; 
5[x=2,y=0.5]; 7[x=2,y=-0.5]; 
2 -- {3,5,7,11,13}; 
3 -- {5,7,11,13}; 
5 -- 7
};
\end{tikzpicture}
\end{center}
We have $|\pi(\text{Fi}_{24}')| = 9$, $s(\text{Fi}_{24}') = 4$, and $t(\text{Fi}_{24}') = 6$. 

Suppose towards a contradiction that $H$ is a group not isomorphic to $\text{Fi}_{24}'$ such that $\Gamma(H) \cong \Gamma(\text{Fi}_{24}')$. 
By Lemma~\ref{lemma:obs}(i), there is a non-abelian simple group $S$ not isomorphic to $\text{Fi}_{24}'$ such that $S \trianglelefteq \overline{H}:= H/F(H) \leq \text{Aut}(S)$, with $s(S) \geq 4$ and $|\pi(S)| \leq 9$.  
It follows from Theorem~\ref{thm:tables}, Lemma~\ref{lemma:J4}, and Lemma~\ref{lemma:E8} that $S$ must be one of the groups
\[
\text{PSL}_3(4),\; {}^2\text{E}_6(2),\; \text{M}_{22},\; \text{J}_1,\; \text{O'N},\; \text{Ly}, \text{ or } {}^2\text{B}_2(2^{2m+1}) \text{ for some } m \geq 1.
\]

Lemma~\ref{lemma:obs}(ii) implies that $t(S) \geq t(H)-1 = 5$, and in particular that $\Gamma(S)$ must have at least $5$ vertices. 
This implies that: $S$ cannot be $\text{PSL}_3(4)$ because $\Gamma(\text{PSL}_3(4))$ has only $4$ vertices; $S$ cannot be ${}^2\text{B}_2(2^{2m+1})$ for any $m \geq 1$ because then $t(S)=4<5$ by Lemma~\ref{lemma:suz}; and $S$ cannot be either $\text{M}_{22}$ or $\text{J}_1$ because in both of these cases $\Gamma(S)$ consists of three isolated vertices and a clique (cf. Section~\ref{ss:J1M22}), so $t(S) = 4 < 5$. 

It remains to show that $S$ cannot be any of ${}^2\text{E}_6(2)$, $\text{O'N}$, or $\text{Ly}$. 
We do this using Lemma~\ref{lemma:modules}. 
The prime graphs of $\text{Ly}$ and $\text{O'N}$ are shown in Sections~\ref{ss:Ly} and \ref{ss:O'N}, and the prime graph of ${}^2\text{E}_6(2)$ is shown below.
\begin{center}
\begin{tikzpicture}
\graph [no placement] { 
19[x=-2,y=0]; 
17[x=-1,y=0.5]; 13[x=-1,y=-0.5]; 
2[x=1,y=0.5]; 3[x=1,y=-0.5]; 
11[x=0,y=0.5]; 5[x=2,y=0.5]; 7[x=2,y=-0.5];
2 -- {3,5,7,11}; 
3 -- {5,7,11}; 
5 -- 7
};
\end{tikzpicture}
\end{center}
In each case, $\Gamma(S)$ has fewer vertices than $\Gamma(H) \cong \Gamma(\text{Fi}_{24}')$, and $|\text{Out}(S)| \in \{1,2,6\}$ divides $|S|$, so there must be a prime $r \in \pi(H) \setminus \pi(S)$ dividing $|F(H)|$. 
Lemma~\ref{lemma:modules} therefore implies that there must be a faithful irreducible module $V$ for $\overline{H}$ in characteristic $r$ such that there exists an element of prime order $p$ in $S \leq \overline{H}$ fixing a non-zero vector in $V$ only if $p$ is adjacent to $r$ in $\Gamma(H)$. 
In particular, $S$ itself must admit such a module.

If $S = {}^2\text{E}_6(2)$, then $r$ cannot be adjacent to $13$, $17$, or $19$, because $\Gamma(H)$ would then have at most two isolated vertices and hence not be isomorphic to $\Gamma(\text{Fi}_{24}')$. 
Because $r$ is coprime to $|S|$, we can use the ordinary character table of $S$, which is available in the Atlas~\cite{ATLAS} and in \textsf{GAP}~\cite{GAPbc,GAP}, to check that this is impossible: every element of order $13$, $17$, or $19$ in $S$ fixes a non-zero vector in every faithful irreducible module for $S$ in characteristic coprime to $|S|$. 
As previously noted, this can be verified using Lemma~\ref{lemma:brauer}. 
Similarly, if $S = \text{O'N}$ then $r$ cannot be adjacent to $11$, $19$, or $31$, and if $S = \text{Ly}$ then $r$ cannot be adjacent to $31$, $37$, or $67$. 
In each case, inspecting the character table of $S$ yields a contradiction.

\subsection{The case $G = \textnormal{Ly}$} \label{ss:Ly}
The prime graph of $\text{Ly}$ is shown below.
\begin{center}
\begin{tikzpicture}
\graph [no placement] { 
67[x=-2,y=0]; 
37[x=-1,y=0.5]; 31[x=-1,y=-0.5]; 
2[x=1,y=0.5]; 3[x=1,y=-0.5]; 
11[x=0,y=0.5]; 5[x=2,y=0.5]; 7[x=2,y=-0.5];
2 -- {3,5,7,11}; 
3 -- {5,7,11}
};
\end{tikzpicture}
\end{center}
We have $|\pi(\text{Ly})| = 8$, $s(\text{Ly}) = 4$, and $t(\text{Ly}) = 6$. 

Suppose towards a contradiction that $H$ is a group not isomorphic to $\text{Ly}$ such that $\Gamma(H) \cong \Gamma(\text{Ly})$. 
By Lemma~\ref{lemma:obs}(i), there is a non-abelian simple group $S$ not isomorphic to $\text{Ly}$ such that $S \trianglelefteq \overline{H} := H/F(H) \leq \text{Aut}(S)$, with $s(S) \geq 4$ and $|\pi(S)| \leq 8$.  
It follows from Theorem~\ref{thm:tables}, Lemma~\ref{lemma:J4}, and Lemma~\ref{lemma:E8} that $S$ must be one of
\[
\text{PSL}_3(4),\; {}^2\text{E}_6(2),\; \text{M}_{22},\; \text{J}_1,\; \text{O'N}, \text{ or } {}^2\text{B}_2(2^{2m+1}) \text{ for some } m \geq 1,
\]
and Lemma~\ref{lemma:obs}(ii) implies that $t(S) \geq t(H)-1 = 5$, so the same arguments as in the case $G = \text{Fi}_{24}'$ show that $S \neq \text{PSL}_3(4)$, $\text{M}_{22}$, $\text{J}_1$, or ${}^2\text{B}_2(2^{2m+1})$ for any $m \geq 1$. 
Moreover, $S \neq {}^2\text{E}_6(2)$ because $\Gamma({}^2\text{E}_6(2))$, which is shown in Section~\ref{ss:Fi24}, has more edges than $\Gamma(\text{Ly})$. 

Finally, if $S = \text{O'N}$ then $\Gamma(S)$ (which is shown in Section~\ref{ss:O'N}) has fewer vertices than $\Gamma(H) \cong \Gamma(\text{O'N})$, and $|\text{Out}(S)|=2$ divides $|S|$ so there exists a prime $r \in \pi(H) \setminus \pi(S)$ dividing $|F(H)|$. 
Lemma~\ref{lemma:modules} therefore implies that there is a faithful irreducible module $V$ for $S$ in characteristic $r$ such that there exists an element of prime order $p$ in $S$ fixing a non-zero vector in $V$ only if $p$ is adjacent to $r$ in $\Gamma(H)$. 
We see that $r$ cannot be adjacent to $11$, $19$, or $31$, or else $\Gamma(H)$ would have at most two isolated vertices, whereas $\Gamma(\text{O'N})$ has three. 
Therefore, the same argument as in Section~\ref{ss:Fi24} yields a contradiction.

\subsection{The case $G = \textnormal{O'N}$} \label{ss:O'N}
The prime graph of $\text{O'N}$ is shown below.
\begin{center}
\begin{tikzpicture}
\graph [no placement] { 
31[x=-2,y=0]; 
19[x=-1,y=0.5]; 11[x=-1,y=-0.5]; 
2[x=0,y=0.5]; 7[x=0,y=-0.5]; 
3[x=1,y=0.5]; 5[x=1,y=-0.5];
2 -- {3,5,7}; 
3 -- 5
};
\end{tikzpicture}
\end{center}
We have $|\pi(\text{O'N})| = 7$, $s(\text{O'N}) = 4$, and $t(\text{O'N}) = 5$. 

Suppose towards a contradiction that $H$ is a group not isomorphic to $\text{O'N}$ such that $\Gamma(H) \cong \Gamma(\text{O'N})$. 
By Lemma~\ref{lemma:obs}(i), there is a non-abelian simple group $S$ not isomorphic to $\text{O'N}$ such that $S \trianglelefteq \overline{H} := H/F(H) \leq \text{Aut}(S)$, with $s(S) \geq 4$ and $|\pi(S)| \leq 7$.  
It follows from Theorem~\ref{thm:tables}, Lemma~\ref{lemma:J4}, and Lemma~\ref{lemma:E8} that $S$ must be one of
\[
\text{PSL}_3(4),\; \text{M}_{22},\; \text{J}_1, \text{ or } {}^2\text{B}_2(2^{2m+1}) \text{ for some } m \geq 1.
\]

If $S = \text{PSL}_3(4)$, then $\Gamma(S)$ consists of the four isolated vertices $2$, $3$, $5$, and $7$. 
Moreover, $|\text{Out}(S)| = 12$ divides $|S|$, so there must be $|\pi(H) \setminus \pi(S)| = 3$ primes dividing $|F(H)|$ but not $|S|$. 
These three primes form a clique in $\Gamma(H)$ because $F(H)$ is a direct product of its Sylow subgroups. 
By Lemma~\ref{lemma:obs}(iii), they are also all adjacent to the vertex $2$, and so we obtain a clique of size $4$ in $\Gamma(H)$, contradicting the fact that $\Gamma(\text{O'N})$ has no such clique. 

If $S = \text{M}_{22}$, then $\Gamma(S)$ consists of the three isolated vertices $5$, $7$, and $11$, and the edge $\{2,3\}$. 
Moreover, $|\text{Out}(S)| = 2$ divides $|S|$, so there must be $|\pi(H) \setminus \pi(S)| = 2$ primes $r_1$ and $r_2$ dividing $|F(H)|$ but not $|S|$, which form an edge in $\Gamma(H)$. 
By Lemma~\ref{lemma:obs}(iii), both $r_1$ and $r_2$ are adjacent to $2$ in $\Gamma(H)$. 
Comparing with $\Gamma(\text{O'N})$, we see that there can be no other edges in $\Gamma(H)$. 
In particular, by Lemma~\ref{lemma:modules}, elements of order $3$ in $S$ must act fixed-point freely on the non-zero vectors of some faithful irreducible module for $S$ in characteristic coprime to $|S|$. 
The character table of $S$ shows that no such module exists. 

If $S = \text{J}_1$, then $\Gamma(S)$ consists of the three isolated vertices $7$, $11$, and $19$, and the clique $\{2,3,5\}$. 
Moreover, $\text{Out}(S)$ is trivial, so there must be $|\pi(H) \setminus \pi(S)| = 1$ prime $r$ dividing $|F(H)|$ but not $|S|$. 
This prime is adjacent to $2$ in $\Gamma(H)$ by Lemma~\ref{lemma:obs}(iii), and there can be no other edges in $\Gamma(H)$. 
In particular, by Lemma~\ref{lemma:modules}, elements of order $3$ in $S$ must act fixed-point freely on the non-zero vectors of some faithful irreducible module for $S$ in characteristic coprime to $|S|$. 
However, no such module exists. 

Finally, suppose that $S = {}^2\text{B}_2(2^{2m+1})$ for some $m \geq 1$.
By Lemma~\ref{lemma:suz}, $\Gamma(S)$ consists of the isolated vertex $2$ and three cliques, which we denote by $C_1$, $C_2$, and $C_3$. 
Given that $\Gamma(\text{O'N})$ has $7$ vertices, we have $3 \leq N \leq 6$, where $N$ is the total number of vertices in $C_1$, $C_2$, and $C_3$. 
If $N=6$, then $\pi(H) = \pi(S)$ and, by Lemma~\ref{lemma:obs}(iii), two of the $C_i$ must be isolated vertices given that $\Gamma(\text{O'N})$ has three isolated vertices. 
The third $C_i$ is then a clique of size $4$, contradicting the fact that $\Gamma(\text{O'N})$ has no such clique. 
If $N \in \{4,5\}$, then there is at least one prime in $\pi(H) \setminus \pi(S)$, and this prime is adjacent to $2$ in $\Gamma(H)$ by Lemma~\ref{lemma:obs}(iii). 
On the other hand, at most two of the $C_i$ have size $1$, so $\Gamma(H)$ has at most two isolated vertices, a contradiction. 
Therefore, $N=3$, i.e. all of the $C_i$ are isolated vertices. 
In particular, $m=1$ or $2$ by Lemma~\ref{lemma:suz}. 
There are now three primes $r_1,r_2,r_3 \in \pi(H) \setminus \pi(S)$, all adjacent to $2$ in $\Gamma(H)$ by Lemma~\ref{lemma:obs}(iii). 
If they all divide $|F(H)|$, then $\{2,r_1,r_2,r_3\}$ is a clique, a contradiction. 
Therefore, $\overline{H} = H/F(H)$ must properly contain $S$, and $r_1$ (say) must divide $|\text{Out}(S)|$ but not $|S|$ or $|F(H)|$. 
Because $m=1$ or $2$, $\text{Out}(S)$ is cyclic of prime order $2m+1=3$ or $5$. 
The latter case yields a contradiction because $5 \in \pi(S)$. 
Therefore, $m=1$, the vertex set of $\Gamma(S)$ is $\{2,5,7,13\}$, $\overline{H}=\text{Aut}(S)$, and $r_2$ and $r_3$ divide $|F(H)|$, yielding the edge $\{r_2,r_3\}$ in $\Gamma(H)$. 
If $\sigma$ is a generator of $\text{Out}(S)$, then $\sigma$ centralises a subgroup ${}^2\text{B}_2(2) \cong 5{:}4$ of $S$ (cf. Section~\ref{ss:M23}), so $\Gamma$ also contains the edge $\{r_1,5\} = \{3,5\}$. 
This means that $\Gamma(H)$ has at most two isolated vertices, $7$ and $13$, a contradiction.


\subsection{The case $G = \mathbb{B}$} \label{ss:B}
The prime graph of $\mathbb{B}$ is shown below.
\begin{center}
\begin{tikzpicture}
\graph [no placement] { 
47[x=-1,y=0]; 31[x=-1,y=-1]; 
13[x=0,y=0.5]; 3[x=1,y=0.5]; 5[x=2,y=0.5];
11[x=0,y=-0.5]; 2[x=1,y=-0.5]; 7[x=2,y=-0.5]; 
23[x=0,y=-1.5]; 19[x=1,y=-1.5]; 17[x=2,y=-1.5]; 
2 -- {3,5,7,11,13,17,19,23}; 
3 -- {5,7,11,13}; 
5 -- {7,11}
};
\end{tikzpicture}
\end{center}
We have $|\pi(\mathbb{B})| = 11$, $s(\mathbb{B}) = 3$, and $t(\mathbb{B}) = 8$. 

Suppose towards a contradiction that $H$ is a group not isomorphic to $\mathbb{B}$ such that $\Gamma(H) \cong \Gamma(\mathbb{B})$. 
By Lemma~\ref{lemma:obs}(i), there is a non-abelian simple group $S$ not isomorphic to $\mathbb{B}$ such that $S \trianglelefteq \overline{H} := H/F(H) \leq \text{Aut}(S)$, with $s(S) \geq 3$ and $|\pi(S)| \leq 11$. 
Moreover, Lemma~\ref{lemma:obs}(ii) implies that we must have $t(S) \geq t(H)-1 \geq 7$, so in particular $|\pi(S)| \geq 7$.

\subsubsection{{\bf The sub-case $s(S) \geq 4$ for $G = \mathbb{B}$}}
We first rule out the candidates for $S$ with $s(S) \geq 4$. 
By Theorem~\ref{thm:tables}, Lemma~\ref{lemma:J4}, and Lemma~\ref{lemma:E8}, $S$ must be one of
\[
{}^2\text{E}_6(2),\; \text{Fi}_{24}',\; \text{J}_4,\; \text{Ly},\; \text{O'N}, \text{ or } {}^2\text{B}_2(2^{2m+1}) \text{ for some } m \geq 1, 
\]
i.e. all other simple groups $S$ with $s(S) \geq 4$ have $|\pi(S)| > 11$ or $|\pi(S)| < 7$. 
In each case except $S = \text{J}_4$, we have $t(S) < 7$. 
(For ${}^2\text{B}_2(2^{2m+1})$, see Lemma~\ref{lemma:suz}; for the other four groups, including ${}^2\text{E}_6(2)$, refer to the prime graphs shown in Sections \ref{ss:Fi24}, \ref{ss:Ly} and \ref{ss:O'N}.) 
If $S = \text{J}_4$, then $\text{Out}(S)$ is trivial, so there must be $|\pi(H) \setminus \pi(S)| = 1$ prime $r$ dividing $|F(H)|$ but not $|S|$. 
The character table of $S$ shows, in particular, that for every isolated vertex $p$ in $\Gamma(S)$, every element of order $p$ in $S$ fixes a non-zero vector in every faithful irreducible $r$-modular representation of $S$. 
Lemma~\ref{lemma:modules} therefore implies that $\Gamma(H)$ is connected.

\subsubsection{{\bf The sub-case $s(S) = 3$ for $G = \mathbb{B}$}} \label{ss:Bs=3}
We now rule out the candidates for $S$ with $s(S) = 3$, which are listed in Table~\ref{tab:s(S)=3}. 
The constraint $7 \leq |\pi(S)| \leq 11$ eliminates the groups $\text{PSU}_6(2)$, $\text{E}_7(2)$, and $\text{E}_7(3)$, and all sporadic groups except $S=\text{Th}$ and $S=\text{Fi}_{23}$. 
The prime graphs of these two groups are shown in Sections~\ref{ss:Th} and~\ref{ss:Fi23}. 
In both cases, $t(S) = 5 < 7$. 
The constraint $t(S) \geq 7$ also rules out all of the groups $\text{PSL}_2(q)$, $\text{G}_2(3^k)$, ${}^2\text{G}_2(3^{2m+1})$, and ${}^2\text{F}_4(2^{2m+1})$, by Lemmas~\ref{lemma:A1}, \ref{lemma:G2}, \ref{lemma:2F4}, and \ref{lemma:2G2_2}.

If $S = \text{Alt}_p$ with $p > 6$ and $p-2$ both prime, then $7 \leq |\pi(S)| \leq 11$ forces $p \in \{19,31\}$. 
In both cases, $\Gamma(S)$ contains a clique $\{2,3,5,7,11\}$ of size $5$, but $\Gamma(\mathbb{B})$ has no such clique. 

If $S = \text{P}\Omega_{2p}^-(3)$ with $p$ a prime of the form $2^m+1$, $m \geq 1$, then Lemma~\ref{lemma:2Dp} tells us that $|\pi(S)| \geq p+1$. 
Given that we need $|\pi(S)| \leq 11$, the only possibilities are $p = 3$ and $p = 5$. 
If $p=3$, then $S \cong \text{PSU}_4(3)$ so $|\pi(S)| = 4 < 7$. 
If $p=5$, then $\Gamma(S)$ is the graph
\begin{center}
\begin{tikzpicture}
\graph [no placement] { 
61[x=-1,y=0.5]; 41[x=-1,y=-0.5]; 
2[x=1,y=0.5]; 3[x=1,y=-0.5]; 
7[x=0,y=0.5]; 5[x=2,y=0.5]; 13[x=2,y=-0.5];
2 -- {3,5,7,13}; 
3 -- {5,7,13}; 
5 -- 13
};
\end{tikzpicture}
\end{center}
and we have $t(S) = 5 < 7$. 

Finally, suppose that $S = \text{F}_4(q)$ with $q$ even. 
If $q \geq 4$, then Lemma~\ref{lemma:F4} tells us that $\Gamma(S)$ has at least three vertices of degree at least $5$. 
However, $\Gamma(\mathbb{B})$ has only two such vertices, namely $2$ and $3$. 
If $q=2$, then $\Gamma(S) = \Gamma(\text{F}_4(2))$ is the graph
\begin{center}
\begin{tikzpicture}
\graph [no placement] { 17[x=0,y=1]; 13[x=0,y=0]; 2[x=1,y=1];  {3[x=2,y,=1],5[x=2,y=0]} -- 2; 7[x=1,y=0] -- {2,3} ; 3 -- 5 };
\end{tikzpicture}
\end{center}
and we have $t(S) = 4 < 7$.


\subsection{The case $G = \textnormal{Th}$} \label{ss:Th}
The prime graph of $\text{Th}$ is shown below.
\begin{center}
\begin{tikzpicture}
\graph [no placement] { 
31[x=0,y=1]; 
19[x=0,y=0]; 
3[x=2,y=1]; 
{7[x=3,y,=1],5[x=3,y=0]} -- 3; 
2[x=2,y=0] -- {3,7}; 
3 -- 5; 
2 -- 5;
13[x=1,y=1] -- 3 
};
\end{tikzpicture}
\end{center}
We have $|\pi(\text{Th})| = 7$, $s(\text{Th}) = 3$, and $t(\text{Th}) = 5$. 

Suppose towards a contradiction that $H$ is a group not isomorphic to $\text{Th}$ such that $\Gamma(H) \cong \Gamma(\text{Th})$. 
By Lemma~\ref{lemma:obs}(i), there is a non-abelian simple group $S$ not isomorphic to $\text{Th}$ such that $S \trianglelefteq \overline{H} := H/F(H) \leq \text{Aut}(S)$, with $s(S) \geq 3$ and $|\pi(S)| \leq 7$.

\subsubsection{{\bf The sub-case $s(S) \geq 4$ for $G = \textnormal{Th}$}}
We begin by ruling out the candidates for $S$ with $s(S)\geq 4$. 
By Theorem~\ref{thm:tables}, Lemma~\ref{lemma:J4}, and Lemma~\ref{lemma:E8}, $S$ must be one of
\[
\text{PSL}_3(4),\; \text{M}_{22},\; \text{J}_1,\; \text{O'N},\text{ or } {}^2\text{B}_2(2^{2m+1}) \text{ for some } m \geq 1, 
\]
i.e. all other simple groups $S$ with $s(S) \geq 4$ have $|\pi(S)| > 7$. 

If $S = \text{PSL}_3(4)$, then $\Gamma(S)$ consists of the four isolated vertices $2$, $3$, $5$, and $7$. 
Because $|\text{Out}(S)| = 12$ divides $|S|$, there must be $|\pi(H) \setminus \pi(S)| = 3$ primes dividing $|F(H)|$ but not $|S|$. 
These primes form a clique in $\Gamma(H)$, and they are all adjacent to $2$ by Lemma~\ref{lemma:obs}(iii), yielding a clique of size $4$. 
However, $\Gamma(\text{Th})$ has no such clique. 

If $S = \text{M}_{22}$, then $\Gamma(S)$ consists of the three isolated vertices $5$, $7$, and $11$, and the edge $\{2,3\}$. 
Because $|\text{Out}(S)| = 2$ divides $|S|$, there must be $|\pi(H) \setminus \pi(S)| = 2$ primes dividing $|F(H)|$ but not $|S|$. 
These primes form an edge in $\Gamma(H)$, and both are adjacent to $2$ by Lemma~\ref{lemma:obs}(iii). 
Lemma~\ref{lemma:modules} implies that they are also both adjacent to $3$, because elements of order $3$ fix a non-zero vector in every representation of $S$ in characteristic coprime to $|S|$. 
Therefore, $\Gamma(H)$ contains a clique of size $4$, whereas $\Gamma(\text{Th})$ has no such clique.

If $S = \text{J}_1$, then $\Gamma(S)$ consists of the three isolated vertices $7$, $11$, and $19$, and the clique $\{2,3,5\}$. 
Because $\text{Out}(S)$ is trivial, there must be $|\pi(H) \setminus \pi(S)| = 1$ prime dividing $|F(H)|$ but not $|S|$. 
This prime is adjacent to $2$ by Lemma~\ref{lemma:obs}(iii). 
Because $\Gamma(H)$ must have two isolated vertices, Lemma~\ref{lemma:modules} implies that must $S$ admit a faithful irreducible module $V$ in characteristic coprime to $|S|$ such that elements of order $p$ fix a non-zero vector in $V$ for at most one $p \in \{7,11,19\}$. 
The character table of $S$ shows that no such module exists.

If $S = \text{O'N}$, then $\Gamma(S)$ is shown in Section~\ref{ss:O'N}. 
Note that $|\pi(S)| = |\pi(\text{Th})|$, and that $\Gamma(\text{Th})$ has two more edges than $\Gamma(S)$. 
The outer automorphism group of $S$ has order $2$, and if $\sigma$ is a (non-trivial) outer automorphism then $|C_{\text{Aut}(S)}(\sigma)|$ is divisible by every prime divisor of $|S|$ other than $31$. 
Therefore, $\overline{H} = H/F(H)$ cannot contain any outer automorphisms of $S$, or else $\Gamma(\overline{H})$ and hence $\Gamma(H)$ would have at most two connected components. 
Hence, $F(H)$ must be non-trivial, with $\pi(F(H)) \subseteq \pi(S)$ because $|\pi(S)| = |\pi(\text{Th})|$. 
Given that $\Gamma(H)$ must have two isolated vertices, there must be exactly one edge joining the sets $X_1 = \{2,3,5,7\}$ and $X_2 = \{11,19,31\}$. 
Let $r \in F(H)$. 
If $r \in X_2$, then $\{r,2\}$ is an edge in $\Gamma(H)$ by Lemma~\ref{lemma:obs}(iii), and the $r$-modular character table of $S$ shows, in particular, that every element of order $3$ in $S$ fixes a non-zero vector in every faithful irreducible $r$-modular representation of $S$, so Lemma~\ref{lemma:modules} implies that $\{r,3\}$ is also an edge. 
Hence, there is more than one edge joining $X_1$ and $X_2$, a contradiction. 
If $r \in X_1$, then the $r$-modular character table of $S$ shows, in particular, that every element of order $11$ or $31$ in $S$ fixes a non-zero vector in every faithful irreducible $r$-modular representation of $S$, so $\{r,11\}$ and $\{r,31\}$ are edges in $\Gamma(H)$, a contradiction. 
(Note that all of the required Brauer character tables of $\text{O'N}$ are available in {\sf GAP}.)

Now suppose that $S = {}^2\text{B}_2(2^{2m+1})$ for some $m \geq 1$.
By Lemma~\ref{lemma:suz}, $\Gamma(S)$ consists of the isolated vertex $2$ and three cliques $C_1$, $C_2$, and $C_3$. 
Given that $|\pi(\text{Th})|=7$, we have $3 \leq N \leq 6$, where $N$ is the total number of vertices in $C_1$, $C_2$, and $C_3$.  
If $N=6$, then $\pi(H) = \pi(S)$ and Lemma~\ref{lemma:obs}(iii) implies that exactly one of the $C_i$ must be an isolated vertex, because $\Gamma(\text{Th})$ has two isolated vertices and no cliques of size $4$. 
The other two $C_i$ must have sizes $2$ and $3$. 
However, this is impossible because $\Gamma(\text{Th})$ does not contain a subgraph consisting of a disjoint triangle and edge. 
If $N=5$, then there is one prime $r \in \pi(H) \setminus \pi(S)$, and $\{2,r\}$ is an edge in $\Gamma(H)$ by Lemma~\ref{lemma:obs}(iii). 
Two of the $C_i$ must be isolated vertices, and the third $C_i$ must be a triangle, so $\Gamma(H)$ again contains a forbidden subgraph consisting of a disjoint triangle and edge. 
If $N=4$, then there are two primes $r_1,r_2 \in \pi(H) \setminus \pi(S)$, and both are adjacent to $2$. 
Therefore, $C_1$ and $C_2$ must be isolated vertices, and $C_3$ must be an edge. 
If either of the $r_i$ divides $|F(H)|$, then Lemmas~\ref{lemma:suz_reps} and~\ref{lemma:modules} imply that $\Gamma(H)$ is connected, so both of the $r_i$ must divide $|\text{Out(S)}|$ but not $|F(H)|$ or $|S|$. 
In particular, $\{r_1,r_2\}$ must be an edge because $\text{Out}(S)$ is cyclic, so we obtain the same forbidden subgraph as for $N \in \{5,6\}$. 
Finally, if $N=3$, then all of the $C_i$ are isolated vertices, so $m\in\{1,2\}$ by Lemma~\ref{lemma:suz}.  
There are now three distinct primes $r_1,r_2,r_3 \in \pi(H) \setminus \pi(S)$, all adjacent to~$2$. 
None of these primes can divide $|F(H)|$, or else $\Gamma(H)$ would be connected (as above), so $|\text{Out}(S)| = 2m+1 \in \{3,5\}$ must be divisible by three distinct primes, but it is not.

\subsubsection{{\bf The sub-case $s(S) = 3$ for $G = \textnormal{Th}$}} \label{ss:Ths=3}
We now rule out the candidates for $S$ with $s(S) = 3$. 
By Theorem~\ref{thm:tables}, we must consider all $S$ in Table~\ref{tab:s(S)=3} with $|\pi(S)| \leq |\pi(\text{Th})| = 7$ (other than $\text{Th}$ itself). 
Because $\Gamma(\text{Th})$ contains a coclique of size $5$, Lemma~\ref{lemma:obs}(ii) further implies that $\Gamma(S)$ must contain a coclique of size $4$, i.e. that $t(S) \geq 4$. 

We start with the sporadic groups, noting that we can ignore $\mathbb{B}$ and $\text{Fi}_{23}$ because their prime graphs have more than $7$ vertices.
Moreover, if $S = \textnormal{HS}$, $\textnormal{J}_{3}$, or $\text{M}_{11}$, then $\Gamma(S)$ consists of three cliques (cf. Sections~\ref{ss:M11} and~\ref{ss:HS-J3}), so $t(S) = 3 < 4$.  

If $S = \text{Suz}$ or $\text{M}_{24}$, then, per Section~\ref{ss:M24-Suz}, the prime graph of $S$ is
\begin{center}
\begin{tikzpicture}
\graph [no placement] { $p$[x=0,y=1]; 11[x=0,y=0]; 2[x=1,y=1];  {3[x=2,y,=1],5[x=2,y=0]} -- 2; 7[x=1,y=0] -- {2,3} ; 3 -- 5 };
\end{tikzpicture}
\end{center}
where $p=11$ or $23$ respectively. 
We have $|\text{Out}(S)| \leq 2$ in both cases, so there must be $|\pi(H) \setminus \pi(S)| = 1$ prime dividing $|F(H)|$ but not $|S|$. 
This prime is adjacent to $2$ in $\Gamma(H)$ by Lemma~\ref{lemma:obs}(iii). 
Given that $\Gamma(H)$ must have two isolated vertices, Lemma~\ref{lemma:modules} implies that $S$ must admit a faithful irreducible module $V$ in characteristic coprime to $|S|$ such that elements of order $11$ fix no non-zero vector in $V$. 
However, no such module exists.

If $S = \text{Co}_2$ or $\text{M}_{23}$, then $\Gamma(S)$ is obtained from $\Gamma(\text{M}_{24})$ by deleting the edge $\{3,7\}$ in both cases and also deleting the edge $\{3,5\}$ in the case $S = \text{M}_{23}$. 
Because $\text{Out}(S)$ is trivial in both cases, there must again be a prime $r$ dividing $|F(H)|$ but not $|S|$. 
The character table of $S$ shows that every element of order $11$ in $S$ fixes a non-zero vector in every faithful irreducible representation of $S$ in characteristic coprime to $|S|$, so Lemma~\ref{lemma:modules} implies that $\Gamma(H)$ contains the edge $\{11,r\}$. 
In particular, $\Gamma(H)$ does not have two isolated vertices. 

It remains to deal with the non-sporadic groups in Table~\ref{tab:s(S)=3}. 
We can ignore the groups $\text{E}_7(2)$ and $\text{E}_7(3)$ because their prime graphs have more than $|\pi(\text{Th})| = 7$ vertices. 
By Lemma~\ref{lemma:2F4}, the same can be said of the groups ${}^2\text{F}_4(2^{2m+1})$, $m \geq 1$. 
The groups $\text{PSL}_2(q)$ and $\text{G}_2(3^k)$ are ruled out because their prime graphs consist of three cliques, i.e. because $t(S)=3<4$ (see Lemmas~\ref{lemma:A1} and~\ref{lemma:G2}). 
The case $S=\text{PSU}_6(2)$ is likewise eliminated upon noting that the connected component $\{2,3,5\}$ of $\Gamma(S)$ is a clique. 

If $S = \text{Alt}_p$ for some prime $p>6$ such that $p-2$ is also prime, then $|\pi(S)| \leq 7$ forces $p \in \{7,13\}$. 
In both cases, $\Gamma(S)$ consists of three cliques, so $t(S)=3<4$. 

Now suppose that $S = \text{P}\Omega_{2p}^-(3)$ with $p = 2^m+1$ a prime, $m \geq 1$. 
We need $|\pi(S)| \leq 7$, so Lemma~\ref{lemma:2Dp} implies that $p \in \{3,5\}$. 
If $p=3$, then $S \cong \text{PSU}_4(3)$ and $\Gamma(S)$ consists of three cliques (the edge $\{2,3\}$ and the isolated vertices $5$ and $7$), so $t(S) = 3 < 4$. 
If $p=5$, then $\Gamma(S)$ contains a clique of size $4$, namely $\{2,3,5,7\}$, but $\Gamma(\text{Th})$ has no such clique. 
(The graph $\Gamma(\text{P}\Omega_{10}^-(3))$ is shown in Section~\ref{ss:Bs=3}.)

If $S=\text{F}_4(q)$ with $q$ even, then Lemma~\ref{lemma:F4} and the constraint $|\pi(S)| \leq 7$ imply that $q=2$. 
The prime graph of $S=\text{F}_4(2)$ is shown in Section~\ref{ss:Bs=3}. 
Because $|\text{Out}(S)| = 2$, there must be $|\pi(H) \setminus \pi(S)| = 1$ prime dividing $|F(H)|$ but not $|S|$. 
Given that $\Gamma(H)$ must have two isolated vertices, Lemma~\ref{lemma:modules} implies that $S$ must admit a faithful irreducible module $V$ in characteristic coprime to $|S|$ such that elements of order $13$ fix no non-zero vector in $V$. 
The character table of $S$ indicates that no such module exists.

It remains to deal with the case $S = {}^2\text{G}_2(3^{2m+1})$, $m \geq 1$. 
First suppose that $m \geq 2$. 
Given that $\Gamma(H)$ must have two isolated vertices, its subgraph $\Gamma(S)$ must have two isolated vertices by Lemma~\ref{lemma:obs}(iii). 
Lemma~\ref{lemma:2G2_2} therefore implies that $|\pi(S)| \geq 9 > |\pi(\text{Th})|$. 
If $m=1$, then $\Gamma(S)$ consists of the vertex set $\{2,3,7,13,19,37\}$ and the edges $\{2,3\}$, $\{2,7\}$, and $\{2,13\}$. 
Because $|\text{Out}(S)|=3$ divides $|S|$, there must be $|\pi(H) \setminus \pi(S)|=2$ primes $r_1$ and $r_2$ dividing $|F(H)|$ but not $|S|$. 
The character table of $S$ shows, in particular, that every element of order $19$ in $S$ fixes a non-zero vector in every faithful irreducible representation of $S$ in characteristic coprime to $|S|$. 
Lemma~\ref{lemma:modules} therefore implies that $\Gamma(H)$ contains the edges $\{19,r_1\}$ and $\{19,r_2\}$, so $\Gamma(H)$ does not have two isolated vertices.


\subsection{The case $G = \textnormal{Fi}_{23}$} \label{ss:Fi23}
The prime graph of $\text{Fi}_{23}$ is shown below.
\begin{center}
\begin{tikzpicture}
\graph [no placement] { 
23[x=-1,y=0.5]; 17[x=-1,y=-0.5]; 
13[x=0,y=0.5]; 11[x=0,y=-0.5]; 
2[x=1,y=0.5]; 3[x=1,y=-0.5]; 
5[x=2,y=0.5]; 7[x=2,y=-0.5]; 
2 -- {3,5,7,11,13}; 
3 -- {5,7,13}; 
5 -- 7
};
\end{tikzpicture}
\end{center}
We have $|\pi(\text{Fi}_{23})| = 8$, $s(\text{Fi}_{23}) = 3$, and $t(\text{Fi}_{23}) = 5$. 
Let us also record the following observations about $\Gamma(\text{Fi}_{23})$:
\begin{itemize}
\item[(O1)] $\Gamma(\text{Fi}_{23})$ has a unique clique of size $4$, and every edge (respectively triangle) in $\Gamma(\text{Fi}_{23})$ has at least one vertex (respectively edge) in common with this clique;
\item[(O2)] every pair of triangles in $\Gamma(\text{Fi}_{23})$ has at least one vertex in common. 
\end{itemize}

Suppose towards a contradiction that $H$ is a group not isomorphic to $\text{Fi}_{23}$ such that $\Gamma(H) \cong \Gamma(\text{Fi}_{23})$. 
By Lemma~\ref{lemma:obs}(i), there is a non-abelian simple group $S$ not isomorphic to $\text{Fi}_{23}$ such that $S \trianglelefteq \overline{H} := H/F(H) \leq \text{Aut}(S)$, with $s(S) \geq 3$ and $|\pi(S)| \leq 8$.

\subsubsection{{\bf The sub-case $s(S) \geq 4$ for $G = \textnormal{Fi}_{23}$}}
We first rule out the candidates for $S$ with $s(S) \geq 4$. 
By Theorem~\ref{thm:tables}, Lemma~\ref{lemma:J4}, and Lemma~\ref{lemma:E8}, $S$ must be one of
\[
\text{PSL}_3(4),\; {}^2\text{E}_6(2),\; \text{M}_{22},\; \text{J}_1,\; \text{O'N},\; \text{Ly}, \text{ or } {}^2\text{B}_2(2^{2m+1}) \text{ for some } m \geq 1, 
\]
i.e. all other simple groups $S$ with $s(S) \geq 4$ have $|\pi(S)| > 8$. 

If $S = \text{PSL}_3(4)$, then $\Gamma(S)$ consists of the four isolated vertices $2$, $3$, $5$, and $7$. 
Because $|\text{Out}(S)| = 12$ divides $|S|$, there must be $|\pi(H) \setminus \pi(S)| = 4$ primes dividing $|F(H)|$ but not $|S|$. 
These primes form a clique in $\Gamma(H)$, and they are all adjacent to $2$ by Lemma~\ref{lemma:obs}(iii), yielding a clique of size $5$. 
However, $\Gamma(H) \cong \Gamma(\text{Fi}_{23})$ has no such clique. 

If $S = \text{M}_{22}$, then $\Gamma(S)$ consists of the three isolated vertices $5$, $7$, and $11$, and the edge $\{2,3\}$. 
Because $|\text{Out}(S)| = 2$ divides $|S|$, there must be $|\pi(H) \setminus \pi(S)| = 3$ primes dividing $|F(H)|$ but not $|S|$. 
These primes form a clique in $\Gamma(H)$, and they are all adjacent to $2$ by Lemma~\ref{lemma:obs}(iii). 
Lemma~\ref{lemma:modules} implies that they are also all adjacent to $3$, because elements of order $3$ fix a non-zero vector in every representation of $S$ in characteristic coprime to $|S|$. 
Therefore, $\Gamma(H)$ contains a clique of size $5$, a contradiction (as above). 

If $S = \text{J}_1$, then $\Gamma(S)$ consists of the three isolated vertices $7$, $11$, and $19$, and the clique $\{2,3,5\}$. 
Because $\text{Out}(S)$ is trivial, there must be $|\pi(H) \setminus \pi(S)| = 2$ primes dividing $|F(H)|$ but not $|S|$. 
These primes form an edge in $\Gamma(H)$, and both are adjacent to $2$ by Lemma~\ref{lemma:obs}(iii). 
Given that $\Gamma(H)$ must have two isolated vertices, Lemma~\ref{lemma:modules} implies that $S$ must admit a faithful irreducible module $V$ in characteristic coprime to $|S|$ such that elements of order $p$ fix a non-zero vector in $V$ for at most one prime $p \in \{7,11,19\}$. 
The character table of $S$ shows that no such module exists. 

If $S = \text{O'N}$, then $\Gamma(S)$ is shown in Section~\ref{ss:O'N}. 
Because $|\text{Out}(S)| = 2$ divides $|S|$, there must be $|\pi(H) \setminus \pi(S)| = 1$ prime $r$ dividing $|F(H)|$ but not $|S|$, and $r$ is adjacent to $2$ in $\Gamma(H)$ by Lemma~\ref{lemma:obs}(iii). 
The character table of $S$ shows that every element of odd prime order in $S$ fixes a non-zero vector in every faithful irreducible representation of $S$ in characteristic $r$, so Lemma~\ref{lemma:modules} implies that $\Gamma(H)$ is connected, a contradiction. 

If $S = \text{Ly}$, then $\Gamma(S)$ is shown in Section~\ref{ss:Ly}. 
Note that $|\pi(S)| = |\pi(\text{Fi}_{23})|$, and that $\Gamma(\text{Fi}_{23})$ has exactly two more edges than $\Gamma(S)$. 
Moreover, $\text{Out}(S)$ is trivial, so $\pi(F(H))$ must be a subset of $\pi(S)$. 
Recall that $\Gamma(\text{Fi}_{23})$ contains a unique clique of size $4$ (see observation~(O1) above), while $\Gamma(S)$ has no such clique. 
In order to create a unique clique of size $4$ in $\Gamma(H)$, we must add exactly one of the edges $\{5,7\}$, $\{5,11\}$, or $\{7,11\}$ to $\Gamma(S)$. 
In particular, $|F(H)|$ must be divisible by some $r \in \{5,7,11\}$. 
By Lemma~\ref{lemma:modules}, there must exist a module for $S$ in characteristic $r$ such that elements of order $p$ fix a non-zero vector for exactly one $p \in \{5,7,11\} \setminus \{r\}$. 
If $r \in \{7,11\}$, then we can see that no such module exists by inspecting the $r$-modular character table of $S$, which is available in {\sf GAP}. 
The $5$-modular character table of $S$ is not available, but the $5$-modular character table of its maximal subgroup $3.\text{McL}.2$ shows that no such module exists for $r=5$ either.

If $S = {}^2\text{E}_6(2)$, then $\Gamma(S)$ is shown in Section~\ref{ss:Fi24}. 
Note that $|\pi(S)| = |\pi(\text{Fi}_{23})|$, and that $13$, $17$, and $19$ are (the only) isolated vertices in $\Gamma(S)$. 
Note that $\text{Out}(S) \cong \text{Sym}_3$. 
If $\overline{H} = H/F(H)$ contains an outer automorphism of $S$ of order $p \in \{2,3\}$, then $p$ is adjacent to both $13$ and $17$ in $\Gamma(\overline{H})$ and hence in $\Gamma(H)$, so $\Gamma(H)$ has at most one isolated vertex. 
Therefore, $F(H)$ must be non-trivial, and $F(H)$ must be a subset of $\pi(S)$ because $|\pi(S)| = |\pi(\text{Fi}_{23})|$. 
To extend $\Gamma(S)$ to a graph $\Gamma(H)$ isomorphic to $\Gamma(\text{Fi}_{23})$, we must add exactly one edge, which must be of the form $\{p,q\}$ where $p \in \{2,3\}$ and $q \in \{13,17,19\}$. 
In particular, $|F(H)|$ must be divisible by some $r \in \{2,3,13,17,19\}$. 
If $r \in \{13,17,19\}$, then Lemma~\ref{lemma:obs}(iii) yields the edge $\{2,r\}$, so Lemma~\ref{lemma:modules} implies that there must exist an $S$-module in characteristic $r$ on which all elements of odd prime order fix no non-zero vector. 
The $r$-modular character tables of $S$, which are available in {\sf GAP}, show that no such module exists. 
For $r \in \{2,3\}$, we inspect the $r$-modular character tables of the maximal subgroups $\text{Fi}_{22}$ and $\text{F}_4(2)$ of $S$, which respectively show that elements of orders $13$ and $17$ always fix non-zero vectors. 
This implies that $\Gamma(H)$ contains both edges $\{r,13\}$ and $\{r,17\}$, and therefore contains at least one more edge than $\Gamma(\text{Fi}_{23})$, a contradiction. 

Now suppose that $S = {}^2\text{B}_2(2^{2m+1})$ for some $m \geq 1$.
By Lemma~\ref{lemma:suz}, $\Gamma(S)$ consists of the isolated vertex $2$ and three cliques $C_1$, $C_2$, and $C_3$. 
Given that $\Gamma(\text{Fi}_{23})$ has $8$ vertices, we have $3 \leq N \leq 7$, where $N$ is the total number of vertices in $C_1$, $C_2$, and $C_3$. 
If $N=7$, then $\pi(H) = \pi(S)$ and exactly one of the $C_i$ must be an isolated vertex because $\Gamma(\text{Fi}_{23})$ has two isolated vertices and no cliques of size $5$ by Lemma~\ref{lemma:obs}(iii). 
The other two $C_i$ must have sizes $2$ and $4$, or $3$ and $3$. 
These cases are ruled out by observations (O1) and (O2) above, respectively. 
If $N=6$, then there is one prime $r \in \pi(H) \setminus \pi(S)$, and $\{2,r\}$ is an edge in $\Gamma(H)$ by Lemma~\ref{lemma:obs}(iii). 
Two of the $C_i$ must therefore be isolated vertices, and the third $C_i$ must be a clique of size $4$ sharing no vertex with the edge $\{2,r\}$, contradicting (O1). 
If $N=5$, then there are two primes $r_1, r_2 \in \pi(H) \setminus \pi(S)$, both adjacent to $2$ in $\Gamma(H)$. 
Therefore, $C_1$ and $C_2$ (say) must be isolated vertices, and $C_3$ must be a triangle. 
If $r_1$ and $r_2$ both divide $|\text{Out}(S)|$, then, because $\text{Out}(S)$ is cyclic, $\{2,r_1,r_2\}$ is a triangle disjoint from $C_3$, contradicting (O2). 
Therefore, at least one of the $r_i$ must divide $|F(H)|$ but not $|S|$. 
Lemmas~\ref{lemma:suz_reps} and~\ref{lemma:modules} then imply that every vertex in $\Gamma(H)$ is adjacent to this $r_i$, so $\Gamma(H)$ is connected and therefore not isomorphic to $\Gamma(\text{Fi}_{23})$. 
If $N=4$, then there are three primes $r_1, r_2,r_3 \in \pi(H) \setminus \pi(S)$, all adjacent to $2$, so $C_1$ and $C_2$ must be isolated vertices and $C_3$ must be an edge. 
At least one $r_i$ must divide $|F(H)|$, because if all of them divide $|\text{Out}(S)|$ then $\{2,r_1,r_2,r_3\}$ is a clique disjoint from the edge $C_3$, contradicting (O1). 
Lemmas~\ref{lemma:suz_reps} and~\ref{lemma:modules} then imply that $\Gamma(H)$ is connected, a contradiction. 
If $N=3$, then all of the $C_i$ are isolated vertices, so $m\in\{1,2\}$ by Lemma~\ref{lemma:suz}. 
There are now four primes $r_1,r_2,r_3,r_4 \in \pi(H) \setminus \pi(S)$, all adjacent to $2$. 
None of these primes can divide $|F(H)|$, or else $\Gamma(H)$ would be connected, so $|\text{Out}(S)|=2m+1 \in \{3,5\}$ must be divisible by three distinct primes, but it is not.

\subsubsection{{\bf The sub-case $s(S) = 3$ for $G = \textnormal{Fi}_{23}$}} \label{ss:Fi23s=3}
We now rule out the candidates for $S$ with $s(S) = 3$. 
By Theorem~\ref{thm:tables}, we must consider all $S$ in Table~\ref{tab:s(S)=3} with $|\pi(S)| \leq |\pi(\text{Fi}_{23})| = 8$ (other than $\text{Fi}_{23}$ itself). 
Because $\Gamma(\text{Fi}_{23})$ contains a coclique of size $5$, Lemma~\ref{lemma:obs}(ii) further implies that $\Gamma(S)$ must contain a coclique of size $4$, i.e. that $t(S) \geq 4$. 

We start with the sporadic groups, noting that we can ignore $\mathbb{B}$ because its prime graph has more than $8$ vertices.
If $S = \textnormal{HS}$, $\textnormal{J}_{3}$, or $\text{M}_{11}$, then $\Gamma(S)$ consists of three cliques (cf. Sections~\ref{ss:M11} and~\ref{ss:HS-J3}), so $t(S)=3<4$. 
The cases $S = \text{Suz}$, $\text{M}_{24}$, $\text{M}_{23}$, and $\text{Co}_2$ are ruled out by exactly the same arguments as in Section~\ref{ss:Ths=3}. 

If $S = \text{Th}$, then the prime graph of $S$ is shown in Section~\ref{ss:Th}. 
Because $\text{Out}(S)$ is trivial, there must be $|\pi(H) \setminus \pi(S)| = 1$ prime $r$ dividing $|F(H)|$ but not $|S|$. 
By Lemma~\ref{lemma:obs}(iii), $\{2,r\}$ is an edge in $\Gamma(H)$. 
The character table of $S$ shows that every element of order $19$ in $S$ fixes a non-zero vector in every faithful irreducible representation of $S$ in characteristic $r$, so Lemma~\ref{lemma:modules} implies that $\Gamma(H)$ contains the edge $\{19,r\}$. 
In particular, $\Gamma(H)$ does not have two isolated vertices, so $\Gamma(H)$ is not isomorphic to $\Gamma(\text{Fi}_{23})$. 

It remains to deal with the non-sporadic groups in Table~\ref{tab:s(S)=3}. 
We can ignore the groups $\text{E}_7(2)$ and $\text{E}_7(3)$ because their prime graphs have more than $|\pi(\text{Fi}_{23})| = 8$ vertices. 
All cases in which $S = \text{PSL}_2(q)$ or $\text{G}_2(3^k)$ are ruled out because $t(S)=3<4$, by Lemmas~\ref{lemma:A1} and~\ref{lemma:G2}, and the case $S = \text{PSU}_6(2)$ is ruled out for the same reason. 

Now suppose that $S = \text{Alt}_p$ for some prime $p>6$ such that $p-2$ is also prime. 
The constraint $|\pi(S)| \leq 8$ forces $p \in \{7,13,19\}$. 
The prime graph of $\text{Alt}_{19}$ has three vertices of degree $5$ (namely $2$, $3$, and $5$), whereas $\Gamma(\text{Fi}_{23})$ has only one such vertex (namely $2$), so $p \neq 19$. 
If $p \in \{7,13\}$, then $\Gamma(S)$ consists of three cliques, so $t(S)=3 < 4$. 

Now suppose that $S = \text{P}\Omega_{2p}^-(3)$ with $p = 2^m+1$ a prime, $m \geq 1$. 
We need $|\pi(S)| \leq 8$, so Lemma~\ref{lemma:2Dp} implies that $p \leq 7$, and hence $p \in \{3,5\}$. 
If $p=3$, then $S \cong \text{PSU}_4(3)$ and $\Gamma(S)$ consists of three cliques, so $t(S)=3<4$. 
If $p=5$, then $\Gamma(S)$ is shown in Section~\ref{ss:Bs=3}.
Because $|\text{Out}(S)|=4$, there must be $|\pi(H) \setminus \pi(S)|=1$ prime $r$ dividing $|F(H)|$ but not $|S|$, and $\{2,r\}$ must be an edge in $\Gamma(H)$. 
The character table of $S$ shows, in particular, that every element of order $41$ in $S$ fixes a non-zero vector in every faithful irreducible representation of $S$ in characteristic coprime to $|S|$, so Lemma~\ref{lemma:modules} implies that $\Gamma(H)$ also contains the edge $\{41,r\}$. 
In particular, $\Gamma(H)$ does not have two isolated vertices. 

If $S = \text{F}_4(q)$ with $q$ even, then Lemma~\ref{lemma:F4}(i) implies that $q \in \{2,4\}$. 
Lemma~\ref{lemma:F4}(ii) rules out $q=4$ because $\Gamma(\text{Fi}_{23})$ has only $9$ edges. 
The case $q=2$ is ruled out by exactly the same argument as in Section~\ref{ss:Ths=3}.

If $S = {}^2\text{F}_4(2^{2m+1})$, $m \geq 2$, then Lemma~\ref{lemma:2F4} implies that $\Gamma(S)$ contains a subgraph that does not arise in $\Gamma(\text{Fi}_{23})$. 
If $S = {}^2\text{F}_4(8)$, then $\Gamma(S)$ is shown in the proof of Lemma~\ref{lemma:2F4}. 
By inspection, $\Gamma(\text{Fi}_{23})$ has no subgraph isomorphic to $\Gamma(S)$, so this case is ruled out too. 

Finally, suppose that $S = {}^2\text{G}_2(3^{2m+1})$, $m \geq 1$. 
The case $m=1$ is ruled out by the same argument as in Section~\ref{ss:Ths=3}. 
If $m \geq 2$, then because $\Gamma(S)$ must have two isolated vertices (by Lemma~\ref{lemma:obs}(iii)), Lemma~\ref{lemma:2G2_2} implies that $|\pi(S)| \geq 9 > |\pi(\text{Fi}_{23})|$, a contradiction.

\end{document}